\theoremstyle{plain}
\newtheorem{thm}{Theorem}[section]
\newtheorem{lem}[thm]{Lemma}
\newtheorem{prop}[thm]{Proposition}
\newtheorem{cor}[thm]{Corollary}
\newtheorem{conj}[thm]{Conjecture}
\theoremstyle{definition}
\newtheorem{defn}[thm]{Definition}
\theoremstyle{remark}
\newtheorem{rem}[thm]{Remark}
\numberwithin{equation}{section}
\newcommand{\bbk}{\Bbbk}
\newcommand{\bbQ}{\mathbb{Q}}
\newcommand{\bbP}{\mathbb{P}}
\newcommand{\bbC}{\mathbb{C}}
\newcommand{\bbG}{\mathbb{G}}
\newcommand{\bbZ}{\mathbb{Z}}
\newcommand{\bbF}{\mathbb{F}}
\newcommand{\Cr}{\operatorname{Cr}}
\newcommand{\Spec}{\operatorname{Spec}}
\newcommand{\GL}{\operatorname{GL}}
\newcommand{\SL}{\operatorname{SL}}
\newcommand{\PGL}{\operatorname{PGL}}
\newcommand{\PSL}{\operatorname{PSL}}
\newcommand{\rdim}{\operatorname{rdim}}
\newcommand{\fdim}{\operatorname{fdim}}
\newcommand{\Aut}{\operatorname{Aut}}
\newcommand{\sheafAut}{\underline{\operatorname{Aut}}}
\newcommand{\cha}{\operatorname{char}}
\newcommand{\id}{\operatorname{id}}
\newcommand{\Hom}{\operatorname{Hom}}
\newcommand{\Pic}{\operatorname{Pic}}
\begin{document}

\title{Representations of finite subgroups of Cremona groups}

\author[A.~Duncan]{Alexander Duncan}
\address{Department of Mathematics, University of South Carolina, 
Columbia, SC 29208, USA}
\email{duncan@math.sc.edu}

\author[B.~Heath]{Bailey Heath}
\address{Department of Mathematics, Yale University,
New Haven, CT 06511, USA}
\email{bailey.heath@yale.edu}

\author[C.~Urech]{Christian Urech}
\address{Department of Mathematics, ETH Zurich, 8092 Zurich, Switzerland}
\email{christian.urech@math.ethz.ch}

\date{\today}
\subjclass[2020]{%
14E07, 
20C99 
}
\keywords{Cremona groups, representation dimension}

\begin{abstract}
The Cremona group of rank n over a field k is the group of
birational automorphisms of the n-dimensional projective space
over the field k.
We study the minimal dimension such that all finite subgroups of
the Cremona group have a faithful representation of that dimension
over the same field.
We find the exact value for rank 1 and 2 over all fields.
We prove that the value is infinite for all fields of positive
characteristic and rank greater than one.
For many fields of characteristic 0, which include number fields
and the complex field, we show that the value is finite for all ranks.
Finally, for all fields of characteristic 0,
we prove that the dimension is bounded below by a function
that is exponential in the rank.
\end{abstract}

\maketitle

\section{Introduction}
\label{sec:intro}

Let $\bbk$ be a field.
For a positive integer $n$,
the \emph{Cremona group of degree $n$ over $\bbk$},
denoted $\Cr_n(\bbk)$, is the group of birational automorphisms of the
projective space $\bbP^n_\bbk$.
Equivalently, $\Cr_n(\bbk)$ is the group of $\bbk$-algebra automorphisms of
the purely transcendental field extension $\bbk(x_1, \ldots, x_n)/\bbk$.

The complex plane Cremona group $\Cr_2(\bbC)$
has been studied for more than 150 years.
Classically, finite subgroups of Cremona groups have been of particular interest.
The finite subgroups of $\Cr_2(\bbC)$ are (almost) completely
classified \cite{Blanc,DolIsk}.
Even in this case, the classification is quite complicated. 
Recently, there has been work towards classifying finite subgroups
over fields $\bbk$ of positive characteristic
\cite{cubicsPosChar,DolgachevMartin24,DolgachevMartin25},
for non-closed fields
\cite{Yasinsky16,Yasinsky22,Boitrel23,Zaitsev23,Smith23,Smith25} 
and for higher $n$
\cite{Pro12Simple,Prok2Group,CremonaSpacePGroup,BCDP}.

Rather than a full classification, in many applications all that
suffices is a bound on the complexity of the finite subgroups that
occur.
Minkowski \cite{Minkowski} found the least common
multiple of the orders of the finite subgroups of
the general linear group $\GL_n(\bbQ)$.
Minkowski's bound can be extended to other
algebraic groups over many other fields \cite{SerreMinkowskiG}.
In \cite{SerreMinkowskiCr}, Serre found a Minkowski-style bound for
$\Cr_2(\bbk)$ for many fields $\bbk$, such as finite fields and number fields.
Unpublished work of Feit and Weisfeiler found absolute upper bounds
on the orders of finite subgroups of every $\GL_n(\bbQ)$
(see \cite{Weisfeiler} and \S{6.1}~of~\cite{GuralnickLorenz}).
However, when $\bbk=\bbC$, even the finite subgroups of
$\GL_1(\bbC) \cong \bbC^\times$ do not have bounded order.

Jordan proved that for every positive integer $n$ there is a positive
integer $J(n)$ such that, if $G$ is a finite subgroup of $\GL_n(\bbC)$,
then $G$ has a normal abelian subgroup of index $\le J(n)$.
The number $J(n)$ is called the \emph{Jordan constant} of $\GL_n(\bbC)$.
Using the classification of finite simple groups,
Collins \cite{Collins} explicitly computed $J(n)$ for all $n$.
One can ask about existence and values of the Jordan constant
for Cremona groups as well.
By work of Birkar~\cite{Birkar} and Prokhorov and Shramov~\cite{PSJordan},
every Cremona group in characteristic $0$ has a finite Jordan constant.
However, explicit bounds on Jordan constants are only known for
$n \le 3$ and a few fields \cite{PSJordan,Yasinsky17}.

The work above use the Minkowski and Jordan bounds from the general
linear group as inspiration for similar bounds for Cremona groups.
In this paper, we bound the complexity of finite subgroups of
Cremona groups via general linear groups directly.   
The \emph{representation dimension} of a finite group $G$ over $\bbk$, denoted
$\rdim_\bbk(G)$, is the minimal $N$ such that there is an embedding
$G \hookrightarrow \GL_N(\bbk)$.
For a field $\bbk$ and a positive integer $n$, define
\[
 c_n(\bbk) :=
\sup \left\{ \rdim_\bbk(G) \ | \ G \textrm{ finite group such that }
G \subseteq \Cr_n(\bbk) \right\} \ .
\]
This paper begins a systematic study of $c_n(\bbk)$ for various integers
$n$ and fields $\bbk$.

The line Cremona group $\Cr_1(\bbk)$ is isomorphic to the
linear algebraic group $\PGL_2(\bbk)$.  Even in this case,
the representation theory of its finite subgroups may differ from that
of the algebraic group itself:

\begin{thm} \label{thm:c1}
\[
c_1(\bbk) = 
\begin{cases}
2 & \textrm{if $\cha(\bbk)=2$,}\\
3 & \textrm{if $\cha(\bbk)\ge 3$,}\\
3 & \textrm{if $\cha(\bbk) = 0$ and $-1$ is a sum of two squares,}\\
2 & \textrm{otherwise}.
\end{cases}
\]
\end{thm}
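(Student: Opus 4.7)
My plan is to establish $c_1(\bbk) = \sup_G \rdim_\bbk(G)$ over finite $G \subseteq \Cr_1(\bbk) = \PGL_2(\bbk)$ by combining a universal upper bound with case-specific witnesses and obstructions, treating the four cases of the theorem separately. For the universal upper bound $c_1(\bbk) \leq 3$, I use that $\PGL_2$ has trivial center, so its adjoint representation on the $3$-dimensional Lie algebra $\mathfrak{pgl}_2$ is faithful over every field, giving an embedding $\PGL_2(\bbk) \hookrightarrow \GL_3(\bbk)$. (The lower bound $c_1(\bbk) \geq 2$ is then easily checked in each case by exhibiting a cyclic subgroup of $\PGL_2(\bbk)$ with no $1$-dimensional faithful representation over $\bbk$.)

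For the lower bound $c_1(\bbk) \geq 3$ in the two cases where $c_1 = 3$, I use $A_4$, which has no faithful $2$-dimensional linear representation over any field of characteristic $\neq 2$ since its $2$-dimensional reps factor through the abelianization $A_4^{\mathrm{ab}} \cong C_3$ and thus contain $V_4$ in their kernel. For $\cha(\bbk) \geq 3$, the inclusion $A_4 \cong \PSL_2(\bbF_3) \hookrightarrow \PGL_2(\bbF_p) \subseteq \PGL_2(\bbk)$ gives a witness. For $\cha(\bbk) = 0$ with $-1 = \alpha^2 + \beta^2$, I construct $A_4 \hookrightarrow \PGL_2(\bbk)$ via the quaternionic matrices $\mathbf{i} = \left(\begin{smallmatrix} \alpha & \beta \\ \beta & -\alpha \end{smallmatrix}\right)$ and $\mathbf{j} = \left(\begin{smallmatrix} -\beta & \alpha \\ \alpha & \beta \end{smallmatrix}\right)$, which satisfy $\mathbf{i}^2 = \mathbf{j}^2 = -I$ and $\mathbf{i}\mathbf{j} = -\mathbf{j}\mathbf{i}$ (generating a copy of $Q_8 \subseteq \GL_2(\bbk)$), together with $\omega = \tfrac{1}{2}(I + \mathbf{i} + \mathbf{j} + \mathbf{i}\mathbf{j})$, which satisfies $\omega^3 = -I$ and cyclically rotates $\{\mathbf{i}, \mathbf{j}, \mathbf{i}\mathbf{j}\}$ under conjugation.

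For the upper bound $c_1(\bbk) \leq 2$ in characteristic $2$, I invoke Dickson's classification: every finite $G \subseteq \PGL_2(\bbk)$ is cyclic, dihedral of order $2n$ with $n$ odd, an elementary abelian $2$-group, a Frobenius-type semidirect product $V \rtimes C_m$ with $V$ elementary abelian $2$-group and $m$ odd, or isomorphic to $\SL_2(\bbF_q)$ for some $q = 2^r \geq 4$. Each admits a direct faithful $2$-dim representation over $\bbk$: companion matrices for cyclic and dihedral subgroups; upper-triangular unipotent matrices $\left(\begin{smallmatrix} 1 & * \\ 0 & 1 \end{smallmatrix}\right)$, whose elements are all involutions in characteristic $2$, for elementary abelian $2$-groups; Borel-subgroup embeddings for Frobenius-type groups; and the defining representation for $\SL_2(\bbF_q)$.

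For the upper bound $c_1(\bbk) \leq 2$ in characteristic $0$ with $-1$ not a sum of two squares, I show every finite $G \subseteq \PGL_2(\bbk)$ is cyclic or dihedral. Otherwise $G \supseteq A_4$ by Dickson, so $A_4 \hookrightarrow \PGL_2(\bbk)$; I then argue that this forces $-1$ to be a sum of two squares, a contradiction. The key cohomological step runs as follows: the determinant class map $\PGL_2(\bbk) \to \bbk^\times/(\bbk^\times)^2$, restricted to $A_4$, factors through the $2$-part of $A_4^{\mathrm{ab}} = C_3$ and hence is trivial, so $A_4 \subseteq \PSL_2(\bbk)$; the preimage of $A_4$ in $\SL_2(\bbk)$ is then a central extension $1 \to \mu_2(\bbk) \to H \to A_4 \to 1$, which must be nonsplit since $A_4$ has no faithful $2$-dim rep, so $H \cong \tilde A_4 = \SL_2(\bbF_3) \supseteq Q_8$; and $Q_8 \hookrightarrow \GL_2(\bbk)$ forces the quaternion algebra $(-1,-1)_\bbk$ to split, giving $-1 = \alpha^2 + \beta^2$. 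Cyclic and dihedral subgroups of $\PGL_2(\bbk)$ themselves lift to $\GL_2(\bbk)$ via companion matrices, using that $C_n \subseteq \PGL_2(\bbk)$ forces $[\bbk(\zeta_n):\bbk] \leq 2$. This central-extension analysis for $A_4$, which intertwines the Schur multiplier of $A_4$ with the Brauer class of $(-1,-1)_\bbk$, is the most delicate ingredient of the proof.
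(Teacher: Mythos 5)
Your overall strategy is sound and genuinely diverges from the paper's in several places. The paper gets the upper bound $3$ from the adjoint representation (as you do), gets the lower bound $3$ from $S_4\subseteq\PGL_2(\bbk)$ via Beauville's Proposition~1.1 whenever $-1$ is a sum of two squares, disposes of characteristic $3$ by $\PGL_2(\bbF_3)\cong S_4$, and then spends most of its effort on characteristic $2$, where it descends a two-dimensional representation from the perfect closure $K$ of $\bbk$ (over which $\SL_2(K)\to\PGL_2(K)$ is an isomorphism) back to $\bbk$ by a case analysis on absolute irreducibility. Your substitution of $A_4$ for $S_4$ and your explicit quaternionic construction of $\widetilde{A_4}=\SL_2(\bbF_3)\subseteq\GL_2(\bbk)$ from $-1=\alpha^2+\beta^2$ are correct and self-contained. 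In the ``otherwise'' case your write-up is in fact more complete than the paper's: the paper only exhibits the lower bound $c_1(\bbk)\ge 2$ there, whereas you prove the upper bound by showing that $A_4\subseteq\PGL_2(\bbk)$ forces $(-1,-1)_\bbk$ to split (via $A_4\subseteq\PSL_2(\bbk)$, the nonsplit central extension $\widetilde{A_4}$, and $Q_8\hookrightarrow\GL_2(\bbk)$); that argument is correct and is essentially the converse direction of Beauville's result. One small repair: the claim that two-dimensional representations of $A_4$ factor through $A_4^{\mathrm{ab}}$ presumes semisimplicity; in characteristic $3$ you should instead restrict to the normal $V_4$ (of order prime to $3$, so the restriction is semisimple) and note that the $3$-cycle fixes no nontrivial character of $V_4$.

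The genuine gap is in characteristic $2$ over non-perfect fields, which is exactly where the paper works hardest. You assert that elementary abelian $2$-subgroups embed via upper-triangular unipotents and that Frobenius-type subgroups $V\rtimes C_m$ embed via a Borel subgroup. But a nontrivial finite unipotent subgroup of $\PGL_2(\bbk)$ need not be conjugate over $\bbk$ into upper-triangular form: its unique fixed point on $\bbP^1$ can be a purely inseparable closed point. For instance, over $\bbk=\bbF_2(t)$ the involution $x\mapsto t/x$ fixes only $x=\sqrt{t}$, and no lift of it to $\GL_2(\bbk)$ has finite order; so one cannot read the two-dimensional representation off the given embedding and must build a new one abstractly. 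When $m>1$ the argument can be salvaged, since the fixed point of $V$ is also a fixed point of the semisimple generator of $C_m$, whose fixed scheme is \'etale, forcing rationality; but for a pure elementary abelian group $V\cong(\bbZ/2\bbZ)^r$ you only obtain $V\hookrightarrow (K,+)$ for a purely inseparable quadratic extension $K/\bbk$, and you then need a cardinality/descent step ($\dim_{\bbF_2}K=\dim_{\bbF_2}\bbk$ for infinite $\bbk$, and finite fields are perfect) to conclude $V\hookrightarrow(\bbk,+)\subseteq\GL_2(\bbk)$. This is precisely the content of the paper's ``$L^n\rtimes L^\times$'' step, and your proof needs an equivalent of it before the characteristic $2$ case is complete.
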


For larger $n$, the group $\Cr_n(\bbk)$ is much more complex.
Here the group $\Cr_n(\bbk)$ is not an algebraic group.
Indeed, if $\bbk$ is infinite, then $\Cr_n(\bbk)$
does not have an embedding into $\GL_n(\bbF)$
for any $n\geq 1$ and any field $\bbF$,
even abstractly (see \cite[5.1]{CerveauDeserti}, \cite{Cornulier}).

However, the finite subgroups may be much more manageable.
In the complex case, the third author showed that $c_2(\bbC) \le 48$
(see\cite[Theorem~1.7]{Urech21} and \cite{urech2017subgroups}).
We find explicit values for the plane Cremona groups over
all fields:

\begin{thm} \label{thm:c2}
\[
c_2(\bbk) = 
\begin{cases}
\infty & \textrm{if $\cha(\bbk) \ne 0$,}\\
8 & \textrm{if $\cha(\bbk) = 0$ and $\sqrt{-3} \in \bbk$,}\\
6 & \textrm{otherwise}.
\end{cases}
\]
\end{thm}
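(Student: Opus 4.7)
The plan is to handle the positive-characteristic case by explicit construction, and the two characteristic-zero subcases by equivariant Mori theory combined with representation-theoretic analysis.

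For $\cha(\bbk) = p > 0$, I would exhibit a sequence of finite $p$-subgroups of $\Cr_2(\bbk)$ with unbounded nilpotency class. The Jonqui\`eres subgroup contains
\[
G_n = \bigl\{ (x,y) \mapsto (x+a,\, y + P(x)) \;:\; a \in \bbF_p,\; P \in \bbF_p[x]_{\le n}\bigr\},
\]
a semidirect product $\bbF_p[x]_{\le n}^+ \rtimes \bbF_p^+$ of order $p^{n+2}$. A direct lower-central-series calculation, using that commutators with the $x$-shift act as differentiation of $P$, shows that $G_n$ has nilpotency class exactly $n+1$. In characteristic $p$, every irreducible representation of a finite $p$-group is trivial, so any faithful $\bbk$-representation of $G_n$ is conjugate into the upper unitriangular subgroup $U_N(\bbk) \subset \GL_N(\bbk)$; since $U_N$ has nilpotency class $N-1$, this yields $\rdim_\bbk(G_n) \ge n+2$, and hence $c_2(\bbk) = \infty$.

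For $\cha(\bbk) = 0$, the plan is to apply the $G$-equivariant minimal model program in dimension two: every finite $G \subset \Cr_2(\bbk)$ can be conjugated into the $\bbk$-automorphism group of a smooth, geometrically rational surface $X$ that is $G$-minimal, and such an $X$ is either a del Pezzo surface or admits a $G$-equivariant Mori conic bundle. I would enumerate the possibilities for $(X,G)$ by combining the Dolgachev--Iskovskikh classification over $\bar\bbk$ with descent to $\bbk$-forms via Galois cohomology, and compute $\rdim_\bbk(G)$ in each case using character tables and the Brauer--Speiser theorem to control Schur indices. The dependence on whether $\sqrt{-3} \in \bbk$ reflects the behavior of three-dimensional complex irreducible representations of certain $3$-primary subgroups (such as the Heisenberg $3$-group or triple covers related to $3.A_6$), whose character values lie in $\bbQ(\sqrt{-3})$: when $\sqrt{-3} \in \bbk$ the representation is defined over $\bbk$ in dimension $3$, whereas otherwise a Galois-conjugate pair combines into a $6$-dimensional $\bbk$-irreducible. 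This dichotomy accounts for the jump between the values $6$ and $8$; the extremal groups realizing these values would be identified explicitly from the enumeration.

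The main obstacle is the characteristic-zero analysis. Matching upper and lower bounds for $c_2(\bbk)$ requires careful Galois-theoretic tracking of $\bbk$-forms and Schur indices across the full case list of pairs $(X,G)$, in addition to exhibiting the extremal groups explicitly; the positive-characteristic construction is direct once the Jonqui\`eres shears are identified.
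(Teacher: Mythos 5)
Your characteristic-zero plan is essentially the paper's: reduce via Manin--Iskovskikh to $G$-minimal del Pezzo surfaces and conic bundles, then bound representation dimensions case by case, with the value $8$ coming from a Heisenberg-type $3$-group in $\PGL_3(\bbk)$ that exists exactly when $\zeta_3\in\bbk$. Two caveats there: the jump to $8$ is not a Schur-index phenomenon where a $3$-dimensional representation doubles to a $6$-dimensional one --- the relevant group is $C_3^2\rtimes\SL_2(\bbF_3)\subseteq\PGL_3(\bbk)$, whose minimal faithful representation has dimension $8$ because $\SL_2(\bbF_3)$ permutes the $8$ nontrivial characters of $C_3^2$ transitively (and when $\zeta_3\notin\bbk$ one shows $C_3^2$ does not embed in $\PGL_3(\bbk)$ at all). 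Also, the lower bound of $6$ over fields such as $\bbQ$ does not fall out of the surface enumeration for free; the paper produces it from a rational $2$-dimensional torus whose automorphism group contains $(\bbZ/4\bbZ)^2\rtimes D_{12}$ (the $\mathsf{G}_2$ short-root configuration). These are fixable gaps in a sketch.

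The positive-characteristic argument, however, contains a genuine error. Write $G_n=V_n\rtimes A$ with $V_n=\bbF_p[x]_{\le n}$ and $A=\bbF_p$ acting by translation, and let $\sigma$ denote translation by $1$. Since $V_n$ and $A$ are abelian, the lower central series is $\gamma_{k+1}(G_n)=I_A^{k}\,V_n$, where $I_A=(\sigma-1)$ is the augmentation ideal of $\bbF_p[A]\cong\bbF_p[u]/(u^p)$, $u=\sigma-1$. Because $(\sigma-1)^p=\sigma^p-1=0$ on any $\bbF_p[C_p]$-module, we get $\gamma_{p+1}(G_n)=1$: the nilpotency class of $G_n$ is at most $p$, uniformly in $n$, not $n+1$. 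The ``commutator acts as differentiation, so the class is the degree plus one'' heuristic is a characteristic-zero intuition that fails here (e.g.\ $\Delta_a(x^p)=a^p$ is already a constant). Consequently your unitriangular bound yields only $\rdim_\bbk(G_n)\ge p+1$, a fixed number, and the claim $c_2(\bbk)=\infty$ does not follow. The paper avoids nilpotency class entirely: for finite $\bbk$ it uses order counting against $|\GL_n(\bbk)|<\infty$, and for infinite $\bbk$ it shows the whole group $\bbk[x]\rtimes\bbk$ is not linear over any field, by combining Mal'cev's compactness theorem (to pass from finite subgroups to the full group) with the Noetherianity of the Zariski topology applied to the descending chain of centralizers of the elements $f_i=\prod_{g\in G_i}(x+g)$; alternatively one can invoke the explicit modular-invariant-theoretic computation of Chen--Shank--Wehlau. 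Some argument of this kind is needed in place of the nilpotency-class bound.
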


For the complex space Cremona group we have bounds
\[
15 \le c_3(\bbC) \le 62208
\]
from Theorem~\ref{thm:c3_bounded} below.

For general $n$, we do not have explicit upper bounds.
Indeed, we show that the representation dimension is not bounded in positive
characteristic.

\begin{thm} \label{thm:infiniteness}
If $n \ge 2$, then
$c_n(\bbk)$ is infinite if $\cha(k)>0$.
\end{thm}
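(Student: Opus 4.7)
The plan is to exhibit, for each integer $k \ge 0$, a finite $p$-subgroup $G_k \subseteq \Cr_2(\bbk)$ (where $p = \cha(\bbk)$) of nilpotency class $k+1$, and then use a Kolchin-type argument to deduce $\rdim_\bbk(G_k) \ge k+2$. Since $\Cr_2(\bbk)$ embeds into $\Cr_n(\bbk)$ for $n \ge 2$ by extending birational self-maps of $\bbP^2$ as the identity on the remaining coordinates, this will give $c_n(\bbk) \ge k+2$ for every $k$, and hence $c_n(\bbk) = \infty$.

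For the construction, I would take $G_k$ to be the group of polynomial automorphisms of $\bbA^2_\bbk$ of the form $\sigma_{c, f}\colon (x, y) \mapsto (x+c,\, y + f(x))$ with $c \in \bbF_p$ and $f \in \bbF_p[x]$ of degree at most $k$. The composition rule $\sigma_{c_1, f_1} \circ \sigma_{c_2, f_2} = \sigma_{c_1 + c_2,\, f_2 + f_1(\cdot + c_2)}$ shows that $G_k$ is a finite subgroup of $\Cr_2(\bbk)$ of order $p^{k+2}$, fitting in a split extension $1 \to T_k \to G_k \to \bbF_p \to 1$ with $T_k = \{\sigma_{0, f}\} \cong \bbF_p^{k+1}$. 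A direct commutator computation yields $[\sigma_{c, 0}, \sigma_{0, f}] = \sigma_{0,\, f(x-c) - f(x)}$; for $c \ne 0$, the backward-difference operator $f \mapsto f(x) - f(x-c)$ drops the degree of $f$ by one and surjects $T_k$ onto $T_{k-1}$. Iterating gives $\gamma_i(G_k) = T_{k-i+1}$ for $2 \le i \le k+1$ and $\gamma_{k+2}(G_k) = 0$, so $G_k$ has nilpotency class exactly $k+1$.

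For the lower bound on representation dimension, given an embedding $G_k \hookrightarrow \GL_N(\bbk)$, every element of the image has $p$-power order, so all its $\bar\bbk$-eigenvalues equal $1$ and the image consists of unipotent matrices. By Kolchin's theorem applied over $\bar\bbk$, the image is then conjugate into $U_N(\bar\bbk)$, which has nilpotency class $N - 1$; this forces $N \ge k+2$, so $\rdim_\bbk(G_k) \ge k+2$. I expect the main obstacle to be the commutator/nilpotency-class computation—in particular, checking that the iterated backward-difference operator really yields the advertised lower central series—together with invoking a clean statement of Kolchin's theorem for abstract (not necessarily algebraic) subgroups of $\GL_N$ consisting of unipotent matrices.
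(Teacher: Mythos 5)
There is a genuine gap, and it sits exactly where you flagged the ``main obstacle'': the nilpotency-class computation. Your commutator formula $[\sigma_{c,0},\sigma_{0,f}]=\sigma_{0,\,f(x-c)-f(x)}$ is correct, but the claim that the backward-difference operator drops the degree by exactly one is false in characteristic $p$: for $f=x^j$ with $p\mid j$ the leading term of $f(x-c)-f(x)$ is $-jc\,x^{j-1}=0$, e.g.\ $x^p-(x-c)^p=c^p$ has degree $0$, not $p-1$. More structurally, writing $G_k=T_k\rtimes\langle s\rangle$ with $s=\sigma_{1,0}$ of order $p$, one has $\gamma_{i+1}(G_k)=(s-1)^iT_k$, and $(s-1)^p=s^p-1=0$ in $\operatorname{End}_{\bbF_p}(T_k)$. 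So $\gamma_{p+1}(G_k)=1$ and the nilpotency class of $G_k$ is $\min(k+1,p)\le p$ for every $k$ (already for $p=2$, $k=2$ the class is $2$, not $3$). Your Kolchin step is fine, but it then yields only $\rdim_\bbk(G_k)\ge \min(k+1,p)+1\le p+1$, a bound independent of $k$, which does not give $c_n(\bbk)=\infty$. The obstruction is unavoidable as long as the translation part is confined to $\bbF_p$: any extension of an elementary abelian group by $C_p$ has class at most $p$.

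A repair would have to let the translations range over larger and larger finite $\bbF_p$-subspaces $V\subset\bbk$ (with $f$ ranging over a suitable $V$-stable space of polynomials), since the augmentation ideal of $\bbF_p[(\bbZ/p)^e]$ has nilpotency index $e(p-1)+1$; this requires $\bbk$ infinite, and the finite-field case must be handled separately anyway (there it is trivial: $G_k$ has unbounded order while $\GL_N(\bbk)$ is finite). This is in the spirit of, but quite different from, the paper's argument: the paper works with the full group $\bbk[x]\rtimes\bbk$, disposes of finite $\bbk$ by the counting argument just mentioned, and for infinite $\bbk$ uses Mal'cev's compactness theorem to reduce to the non-linearity of $\bbk[x]\rtimes\bbk$ itself, which it proves via an infinite strictly descending chain of Zariski-closed centralizers $Z_j$ contradicting Noetherianity --- no nilpotency-class or Kolchin input at all.
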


In contrast, as a corollary of the work of Birkar, Prokhorov and
Shramov on the Jordan constant,
we have finiteness in characteristic $0$ for many fields:

\begin{thm} \label{thm:finiteness}
Suppose $\bbk$ is a field of characteristic $0$ that either
contains all roots of unity or is finitely generated over $\bbQ$.
Then $c_n(\bbk)$ is finite.
\end{thm}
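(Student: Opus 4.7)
The plan is to combine the Jordan property of $\Cr_n(\bbk)$ (Birkar; Prokhorov--Shramov) with a uniform bound on the representation dimension of the normal abelian piece. Let $G\subseteq \Cr_n(\bbk)$ be finite. By Birkar and Prokhorov--Shramov, $\Cr_n(\bbk)$ has a finite Jordan constant $J_n$ in characteristic zero, so $G$ contains a normal abelian subgroup $A$ with $[G:A]\le J_n$. Induction of a faithful representation $\rho\colon A\to\GL_d(\bbk)$ to $G$ produces a faithful representation $\Ind_A^G\rho\colon G\to\GL_{dJ_n}(\bbk)$: the kernel must lie in $A$ because elements outside $A$ nontrivially permute the $[G:A]$ summands, and within $A$ it equals $\bigcap_g g(\ker\rho)g^{-1}=1$. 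Thus $\rdim_\bbk(G)\le J_n\cdot\rdim_\bbk(A)$, and the problem reduces to a uniform bound on $\rdim_\bbk(A)$ for finite abelian $A\subseteq\Cr_n(\bbk)$.

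Next I would apply equivariant resolution of singularities and an $A$-equivariant MMP over $\bbk$, starting from $\bbP^n_\bbk$, to obtain a smooth projective $\bbk$-variety $X$ with biregular $A$-action fitting as a Mori fiber space. By Birkar's theorem on the boundedness of Fano varieties in dimension $n$, the possible $X$ (together with the Fano bases and fibers occurring in the MFS) lie in a bounded family over $\bbk$; after at most an inductive dévissage along the fibration, $\dim H^0(X,-K_X)$ (on the Fano pieces) is uniformly bounded by some $N=N(n)$, yielding an embedding $\Aut(X)\hookrightarrow \GL_N$ as linear algebraic groups over $\bbk$. In particular $A\hookrightarrow \GL_N(\bbk)$.

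In the case $\bbk\supseteq\mu_\infty$, every finite abelian subgroup of a linear algebraic group in characteristic zero is diagonalisable, hence contained in a maximal torus of $\GL_N$, of dimension $N$, so $\operatorname{rank}(A)\le N$. Since $\bbk$ contains all roots of unity, $A$ embeds into $(\bbk^\times)^{\operatorname{rank}(A)}$, giving $\rdim_\bbk(A)=\operatorname{rank}(A)\le N$. In the case $\bbk$ finitely generated over $\bbQ$, the Minkowski--Serre bound on finite subgroups of $\GL_N(\bbk)$ yields $|A|\le M_N(\bbk)$ for some finite constant, so $\rdim_\bbk(A)\le|A|\le M_N(\bbk)$ via the regular representation. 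Either way, $\rdim_\bbk(G)\le J_n\cdot\rdim_\bbk(A)$ is bounded by a quantity depending only on $n$ and $\bbk$.

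The hard part will be the geometric reduction: running the $A$-equivariant MMP over a possibly non-closed field and leveraging BAB together with the rigidity of automorphism schemes in bounded families to extract a single target $\GL_N$ containing all the relevant finite abelian actions. Once this is in place, the two hypotheses on $\bbk$ close the argument by very different routes — torus containment of the diagonalisable finite abelian group when all roots of unity are available, and Minkowski-type boundedness of torsion over finitely generated fields.
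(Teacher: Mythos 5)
Your overall architecture is different from the paper's, and the difference matters. The paper does not pass through the Jordan property at all in the case $\bbk\supseteq\mu_\infty$: it reduces to $\bbk=\bbC$ and invokes Prokhorov--Shramov's fixed-point theorem (\cite[Theorem~4.2]{PSJordan}), which gives a subgroup $H\le G$ of index $\le K(n)$ fixing a point of a smooth model $X$; the faithful action of $H$ on the tangent space $T_pX$ puts $H$ inside $\GL_n(\bbC)$, and induction gives $G\hookrightarrow\GL_{K(n)n}(\bbC)$. For $\bbk$ finitely generated over $\bbQ$, the paper simply cites the boundedness of the \emph{orders} of finite subgroups of $\Cr_n(\bbk)$ (\cite[Corollary~1.5]{PSJordan} plus \cite{Birkar}), after which the regular representation of $G$ itself finishes the proof --- no Jordan property, no abelian subgroup, no geometry.

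The gap in your argument is exactly the step you flag as ``the hard part'': the uniform bound on $\rdim_\bbk(A)$ for finite abelian $A\subseteq\Cr_n(\bbk)$. Your reduction of the theorem to this bound is fine (the induced-representation computation is correct, since the core of the normal subgroup $A$ equals $A$), and your two endgames --- diagonalizability and rank $\le N$ when $\bbk\supseteq\mu_\infty$, Minkowski--Serre when $\bbk$ is finitely generated --- are fine \emph{given} an embedding $A\hookrightarrow\GL_N(\bbk)$ with $N$ depending only on $n$. But the MMP/BAB sketch does not deliver that embedding. After an $A$-equivariant MMP you land on a Mori fiber space $X\to Z$ with $\dim Z>0$ in general; $X$ is then not Fano, $-K_X$ has no reason to have uniformly bounded sections, and there is no single $\GL_N$ receiving $\Aut(X)$. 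The ``inductive d\'evissage along the fibration'' would have to control an extension of (the image of $A$ in) $\operatorname{Bir}(Z)$ by the group acting on the generic fiber, where $Z$ is only rationally connected over a larger field --- this is precisely the content of the Prokhorov--Shramov machinery, not a routine reduction, and representation dimensions (as opposed to ranks) do not obviously behave well under such extensions. The honest fix is to cite the strong form of the Jordan property proved by Prokhorov--Shramov, namely that the normal abelian subgroup $A$ can be chosen to be generated by at most $n$ elements; with that in hand your roots-of-unity endgame closes immediately. For the finitely generated case you should instead quote boundedness of $|G|$ directly, which makes the entire geometric reduction unnecessary there.
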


Explicit upper bounds for $c_n(\bbk)$ for all $n$ appear out of reach
at this time as they would imply explicit upper bounds for the
corresponding Jordan
constants.
However, we have the following uniform lower bounds obtained
using the representation theory of algebraic tori:

\begin{thm} \label{thm:lowerBounds}
The following lower bounds hold for all fields $\bbk$:\newline
\begin{center}
\begin{tabular}{c|ccccccc}
$n$ & $1$ & $2$ & $3$ & $4$ & $5$ & $6$ & $\ge 7$\\
\hline
$c_n(\bbk) \ge$ & $2$ & $6$ & $12$ & $24$ & $40$ & $72$ & $2^n$\\
\end{tabular}
\end{center}
\end{thm}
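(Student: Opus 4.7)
The plan is to realize large finite subgroups of $\Cr_n(\bbk)$ as subgroups of a torus normalizer, then bound their $\bbk$-representation dimension from below using the lattice-theoretic representation theory of algebraic tori. The guiding principle is that a $\bbk$-rational algebraic torus $T$ of dimension $n$ embeds in $\Cr_n(\bbk)$ via its translation action, and the representation-theoretic obstructions intrinsic to $T$ (as an algebraic group, encoded in its character lattice as a Galois module) are inherited by sufficiently large finite subgroups of $T(\bbk)$.

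\medskip

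\textbf{Step 1 (Tori and their normalizers in $\Cr_n$).} I would first check that if $T$ is a $\bbk$-rational algebraic torus of dimension $n$ and $W$ is a finite group of $\bbk$-algebraic automorphisms of $T$, then $(T \rtimes W)(\bbk) = T(\bbk) \rtimes W$ embeds in $\Cr_n(\bbk)$. All tori of dimension $\le 2$ are $\bbk$-rational, so rationality is automatic in low dimensions, and for higher $n$ one can always use split tori $T = \bbG_m^n$ together with a finite $W \subseteq \GL_n(\bbZ) = \Aut_\bbk(T)$. This gives the source of our finite subgroups.

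\medskip

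\textbf{Step 2 (Explicit constructions for $n \le 6$).} For each $n \in \{1,\dots,6\}$, I would exhibit a specific finite subgroup $G_n$ inside $T(\bbk) \rtimes W$ for an appropriate pair $(T,W)$ defined over the prime field of $\bbk$, such that $\rdim_\bbk(G_n)$ meets the tabulated value uniformly in $\bbk$. The $\bbk$-representation dimensions would be controlled via lattice combinatorics: for a torus $T$ with character lattice $M$ (as a Galois module), $\rdim_\bbk(T)$ equals the minimal rank of a permutation Galois lattice surjecting onto $M$, and choosing $G_n$ to contain the $m$-torsion $T[m](\bbk)$ for a suitably divisible $m$ forces $\rdim_\bbk(G_n) \ge \rdim_\bbk(T)$. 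For the small-dimensional tori required here, this reduces to a finite lattice computation drawing on the Bravais classification of finite subgroups of $\GL_n(\bbZ)$.

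\medskip

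\textbf{Step 3 (Exponential bound for $n \ge 7$).} For large $n$, I would produce an $n$-dimensional $\bbk$-rational torus $T$ whose character lattice $M$ is a Galois module of rank $n$ for which every permutation cover has rank $\ge 2^n$. A natural source is to take $M$ to be an $n$-dimensional sublattice of the regular lattice of a suitable $2$-group of order $2^n$ (a sign-permutation lattice), so that the minimal permutation resolution has rank equal to the group order $2^n$. A finite subgroup $G \subseteq T(\bbk) \rtimes W$ containing enough torsion then inherits $\rdim_\bbk(G) \ge 2^n$.

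\medskip

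\textbf{Main obstacle.} The crucial difficulty is the passage from the algebraic-group bound $\rdim_\bbk(T) \ge N$ to the finite-subgroup bound $\rdim_\bbk(G) \ge N$: one must argue that a small faithful $\bbk$-representation of $G$ forces a correspondingly small faithful $\bbk$-representation of $T$, which requires $G$ to be ``large enough'' inside $T$ and the argument to be uniform in $\bbk$ (including positive characteristic and algebraically closed fields, where the Galois structure degenerates). Producing a lattice that is simultaneously $n$-dimensional, $\bbk$-rational after realization as a torus, and whose minimal permutation cover has rank $\ge 2^n$ is the main technical hurdle for the general lower bound, while the small-dimensional cases require delicate case analysis to match the exact values $2,6,12,24,40,72$.
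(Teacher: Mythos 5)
There is a genuine gap at the heart of your Steps 2 and 3: the inequality $\rdim_\bbk(G_n) \ge \rdim_\bbk(T)$, derived from the Galois-module structure of the character lattice, cannot possibly deliver the stated bounds. The theorem must hold for \emph{all} fields, including $\bbk = \bbC$, and over an algebraically closed field every torus is split, so $\rdim_\bbk(T) = n$ for every $n$-dimensional torus $T$; your mechanism therefore tops out at the linear bound $n$, far below $2^n$. You correctly flag this degeneration as the ``main obstacle,'' but it is not an obstacle to be overcome along these lines --- the Galois-lattice invariant is simply the wrong invariant. The paper's argument (Lemma~\ref{lem:toric2346}) uses no Galois descent at all: it takes a finite subgroup $H \subseteq \GL_n(\bbZ)$ acting on $T$ by \emph{geometric} group automorphisms, forms the finite group $G = A \rtimes H$ where $A \cong (\bbZ/d\bbZ)^n$ is a group of $\bbk$-rational torsion points, and observes that the restriction of any faithful representation of $G$ to $A$ splits into characters permuted by $H$ which must generate $A^\vee$; hence $\rdim_\bbk(G) \ge \operatorname{symrank}((\bbZ/d\bbZ)^n, H)$. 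The combinatorial quantity being bounded is the symmetric rank of the \emph{finite} dual group under the \emph{geometric} $H$-action, not the rank of a permutation cover of the character lattice under Galois.

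A second, related failure is your reliance on split tori for $n \ge 7$. Over $\bbQ$ (or $\bbR$) the split torus $\bbG_m^n$ has rational torsion only $(\bbZ/2\bbZ)^n$, and since $(\bbZ/2\bbZ)^n$ has only $2^n - 1$ nontrivial elements, $\operatorname{symrank}((\bbZ/2\bbZ)^n, H) \le 2^n - 1 < 2^n$ for any $H$: the exponential bound is unreachable from $2$-torsion. The paper's fix is to replace $\bbG_m$ by the norm-one tori of $\bbQ(i)$ and $\bbQ(\sqrt{-3})$ (the conics $x^2+y^2=1$ and $x^2+3y^2=1$), which are $\bbk$-rational and carry rational points of order $4$ and $6$ over every field of characteristic $\ne 2,3$; taking $n$-th powers yields rational tori with $(\bbZ/4\bbZ)^n$ or $(\bbZ/6\bbZ)^n$ inside $T(\bbk)$. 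The $2^n$ bound then comes from the $\mathsf{B}_n$ weight lattice modulo $4$ acted on by $(\bbZ/2\bbZ)^n \rtimes S_n$ (any stable generating set must contain a full orbit of spinor classes), and the values $2,6,12,24,40,72$ come from the root lattices $\mathsf{A}_1,\mathsf{G}_2,\mathsf{A}_3,\mathsf{F}_4,\mathsf{D}_5,\mathsf{E}_6$ modulo $4$ or $3$ with their Weyl groups. Finally, the residual characteristic $2$ and $3$ cases are not handled by tori at all: for $n \ge 2$ they follow from $c_n(\bbk) = \infty$ (Theorem~\ref{thm:infiniteness}), and for $n=1$ from Theorem~\ref{thm:c1}. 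Your proposal would need all three of these repairs to go through.
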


The structure of the paper is as follows.
After preliminaries in \S{\ref{sec:prelim}},
we prove Theorem~\ref{thm:c1} on the Cremona group of degree $1$
in \S{\ref{sec:cr1}}.
We prove Theorem~\ref{thm:infiniteness}
in \S{\ref{sec:poschar}}; the remainder of the paper is in
characteristic $0$.
In \S{\ref{sec:torus}}, we prove the lower bounds of
Theorem~\ref{thm:lowerBounds}.
In \S{\ref{sec:pgl}} and \S{\ref{sec:wps}}, we establish some basic
results on projective spaces and weighted projective spaces.
These results are applied in \S{\ref{sec:cr2}} where we
finally prove Theorem~\ref{thm:c2} on the Cremona group of degree $2$.
In \S{\ref{sec:cr_higher}}, we prove
Theorem~\ref{thm:finiteness} and make some remarks about higher
degree Cremona groups.
In Appendix~\ref{sec:tedious_group_theory}, we prove a technical group
theoretic result used in \S{\ref{sec:cr2}}.

\subsection*{Acknowledgements}

The authors thank R.~J.~Shank for helpful ideas used in
Section~\ref{sec:poschar}.
A.~Duncan and C.~Urech would like to thank the Mathematisches Forschungsinstitut
Oberwolfach and the Simons Center for Geometry and Physics
where some of the work for this paper was completed.

\section{Preliminaries}
\label{sec:prelim}

Throughout all of this article, a variety over a field $\bbk$ is a
geometrically integral and separated scheme of finite
type over $\bbk$. Surfaces are always assumed to be projective. 

Let $\bbG_m = \Spec(\bbZ[t,t^{-1}])$ be the multiplicative group scheme
and $\mu_n = \Spec(\bbZ[t]/(t^n-1))$ be the group scheme of $n$-th roots of
unity.
For a field $\bbk$, the $\bbk$-group scheme $\bbG_{m,\bbk}$ is an algebraic group
(i.e. smooth), while $\mu_{n,\bbk}$ is smooth if and only if $n$ is not
divisible by $\cha(\bbk)$.

By $\GL_n$ we denote the group scheme of invertible matrices,
and by $\SL_n$ the group scheme of invertible matrices with determinant
$1$. We define $\PGL_n$ to be the quotient scheme $\GL_n/\bbG_m$ and
$\PSL_n(k)$ the image of the map $\SL_n(k) \to \PGL_n(k)$.

Let us extend the notion of {representation dimension} to algebraic
groups.
For an algebraic group $G$ defined over $\bbk$, the
\emph{representation dimension} of $G$, denoted $\rdim_\bbk(G)$, is the
minimal dimension of an (algebraic) faithful representation of $G$ over
$\bbk$.
If $G$ is a constant finite group, then $\rdim_\bbk(G)=\rdim_\bbk(G(\bbk))$.

\begin{rem}
Note that $\GL_2(\bbF_2) \cong \PGL_2(\bbF_2)$ as abstract groups;
while $\GL_{2,\bbF_2} \not\cong \PGL_{2,\bbF_2}$ as algebraic groups.
This distinction is important since
it is possible that $\rdim_\bbk(G(\bbk)) \ne \rdim_\bbk(G)$ for certain
algebraic groups $G$.
For example, we will see below that
$\rdim_{\bbF_2}(\PGL_2(\bbF_2))=2$ as an abstract group,
while
$\rdim_{\bbF_2}(\PGL_{2,\bbF_2}))=3$
as an algebraic group. However, we always have $\rdim_\bbk(G(\bbk))\leq\rdim_\bbk(G)$.
\end{rem}

For an abstract group $G$, we define $\fdim_\bbk(G)$ to be the maximum of
$\rdim_\bbk(H)$ where $H$ is a finite subgroup of $G$.
For an algebraic group $G$, we define $\fdim_\bbk(G)=\fdim_\bbk(G(\bbk))$.

Suppose $X$ is a variety defined over a field $\bbk$.
Recall that a \emph{$\bbk$-form of $X$} is a $\bbk$-variety $Y$ such that
$X_K \cong Y_K$ for a field extension $K/\bbk$.

\begin{lem} \label{lem:forms}
Let $X$ be a $\bbk$-variety and suppose the automorphism group scheme
$\sheafAut(X)$ is a linear algebraic group.
Then 
\[
\rdim_k(\sheafAut(X)) = \rdim_k(\sheafAut(Y))
\]
for any $\bbk$-form $Y$ of $X$.
\end{lem}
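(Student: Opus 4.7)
The plan is to identify $\sheafAut(Y)$ as an inner $\bbk$-form of $\sheafAut(X)$ and then apply Hilbert's Theorem~90 to transport a faithful representation across this twist without changing its dimension.

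First I would fix a Galois extension $K/\bbk$ with group $\Gamma$ over which $Y$ becomes isomorphic to $X$, choose an isomorphism $\phi\colon X_K\to Y_K$, and form the descent cocycle $\tilde\tau_\sigma := \phi^{-1}\circ{}^\sigma\phi\in\sheafAut(X)(K)$. A direct computation, comparing the natural Galois action on $\sheafAut(Y)(K)$ with the action on $\sheafAut(X)(K)$ transported through conjugation by $\phi$, shows that the two differ precisely by inner conjugation by $\tilde\tau_\sigma$. Thus $\sheafAut(Y)$ is the inner twist of $\sheafAut(X)$ by $\tilde\tau$, and in particular is again a linear algebraic group.

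Next I would take a faithful representation $\rho\colon\sheafAut(X)\hookrightarrow\GL_{N,\bbk}$ with $N=\rdim_\bbk(\sheafAut(X))$ and push $\tilde\tau$ forward to a cocycle $\rho\circ\tilde\tau\in Z^1(\Gamma,\GL_N(K))$. Hilbert's Theorem~90 makes this cocycle a coboundary, producing $g\in\GL_N(K)$ with $\rho(\tilde\tau_\sigma)=g^{-1}\cdot{}^\sigma g$ for every $\sigma\in\Gamma$. A short verification, using exactly this identity, shows that $\mathrm{Int}(g)\circ\rho_K$, viewed through $\phi_*$ as a $K$-representation of $\sheafAut(Y)_K$, is equivariant for the twisted Galois action defining $\sheafAut(Y)$, and therefore descends to a faithful $\bbk$-representation $\sheafAut(Y)\hookrightarrow\GL_{N,\bbk}$. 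This yields $\rdim_\bbk(\sheafAut(Y))\le\rdim_\bbk(\sheafAut(X))$, and the reverse inequality is immediate by symmetry since $X$ is itself a $\bbk$-form of $Y$.

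The only step that requires genuine care is the identification of $\sheafAut(Y)$ as an \emph{inner} form of $\sheafAut(X)$, because it is this inner-form structure that permits the cocycle to be lifted from $\sheafAut(X)(K)$ into $\GL_N(K)$ rather than into the outer automorphism group, where Hilbert~90 would not directly apply. Once this is in place, the rest amounts to routine Galois-descent bookkeeping and presents no serious obstacle.
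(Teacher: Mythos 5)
Your proposal is correct and follows essentially the same route as the paper: both identify $\sheafAut(Y)$ as the twist of $\sheafAut(X)$ by a cocycle/torsor valued in $\sheafAut(X)$ acting on itself by conjugation, push this through a faithful representation into $\GL_N$, kill the resulting class by Hilbert's Theorem~90, and obtain the reverse inequality by symmetry. The only difference is presentational --- the paper phrases the argument functorially in the language of twisting by torsors, whereas you unwind it at the level of explicit Galois cocycles --- and this does not change the substance.
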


\begin{proof}
We use the formalism of Galois cohomology and twisting by torsors
(see, for example, \S{1.5}~and~\S{3.1}~of~\cite{SerreGC}).

Since $Y$ is a $k$-form of $X$,
there exists an $\sheafAut(X)$-torsor $T$ such that $Y \cong {}^T\!X$
where ${}^T\!X := (T \times X)/G$ is the twist of $X$ by $T$.
Suppose there exists an embedding $f : \sheafAut(X) \to \GL_{n,\bbk}$
of algebraic groups.
There is a natural action of $\sheafAut(X)$ on itself by conjugation and,
via $f$, on $\GL_{n,\bbk}$ as well, making $f$ equivariant.
Twisting is functorial, so there is an embedding
${}^T\!f : {}^T\!\sheafAut(X) \to {}^T\!\GL_{n,\bbk}$
of algebraic groups.
Now ${}^T\!\sheafAut(X)$ is isomorphic to $\sheafAut({}^T\!X)$ and,
by Hilbert's Theorem 90, ${}^T\!\GL_{n,\bbk}$
is isomorphic to $\GL_{n,\bbk}$.
Thus we obtain an embedding $\sheafAut(Y) \to \GL_{n,\bbk}$.

Conversely, since $\sheafAut(Y) \cong {}^T\!\sheafAut(X)$, it is also a linear
algebraic group.
Thus the same argument as above, shows that the
existence of an embedding $\sheafAut(Y) \to \GL_{n,\bbk}$
implies the existence of an embedding $\sheafAut(X) \to \GL_{n,\bbk}$.
Thus the desired result holds.
\end{proof}

We will use the notation $\sheafAut(X)$ to denote the automorphism group
scheme of a $\bbk$-variety $X$ and the notation $\operatorname{Aut}(X)$
to denote the abstract automorphism group,
which is simply $\sheafAut(X)(\bbk)$.
The distinction is occasionally important below.

\section{Line Cremona Groups}
\label{sec:cr1}

Observe that $\Cr_1(\bbk) \cong \PGL_{2}(\bbk)$.
This follows immediately from the standard facts that any rational map
of smooth proper curves is regular and that the automorphism group of
$\bbP^1$ is $\PGL_2(\bbk)$.

\begin{proof}[Proof of Theorem~\ref{thm:c1}]
Observe that $\PGL_{2,\bbk}$ has a $3$-dimensional faithful
representation as an algebraic group via the adjoint representation.
Thus $c_1(\bbk) \le 3$ for all fields $\bbk$.

First, we consider the case where $\bbk$ has characteristic $\ne 2,3$.
Assume that $-1$ is the sum of two squares in $\bbk$.
Recall that every element of a finite field is a sum of two squares,
so this hypothesis always holds in positive characteristic.
From Proposition~1.1~of~\cite{Beauville},
we see that $S_4$ is a subgroup of $\PGL_2(\bbk)$.
Since $S_4$ has order coprime to the characteristic, standard results on
the representation theory of symmetric groups imply that $\rdim_k(S_4)=3$.

Now assume that $-1$ is \emph{not} the sum of two squares in $\bbk$, so
in particular $\cha{\bbk}=0$.
The symmetric group on $3$ letters has a faithful $2$-dimensional
representation via
\begin{equation}
\left\langle 
\begin{pmatrix}
0 & -1 \\ 1 & -1
\end{pmatrix},\
\begin{pmatrix}
0 & 1 \\ 1 & 0
\end{pmatrix}
\right\rangle \ .
\end{equation}
There is no $1$-dimensional representation of a non-abelian group,
thus $c_1(\bbk)\ge 2$.

If $\bbk$ has characteristic $3$, then $c_1(\bbk)=3$
since $\PGL_2(\bbF_3) \cong S_4$
has no faithful $2$-dimensional representations.
It remains to consider the case when $\bbk$ has characteristic $2$.

If $K$ has characteristic $2$ and every element in $K$ is a square,
then the natural morphism $\SL_2(K) \to \PGL_2(K)$ is an isomorphism.
Thus there exists a purely inseparable field extension $K/\bbk$
along with an embedding $\rho : G \to \SL_2(K)$ such that
the map $\SL_2(K) \to \PGL_2(K)$ takes $G$ into its image in the subgroup
$\PGL_2(\bbk)$.
If $\bbk$ is finite then there are no non-trivial purely inseparable
field extensions, so we may assume $\bbk$ is infinite.

Suppose that $\rho$ is not absolutely irreducible.
In other words, there exists a field extension $F/K$ such that
$\rho_F$ is decomposable.
This means that there is a choice of basis such that
\begin{equation} \label{eq:upper_triangular}
\rho_F(g) = \begin{pmatrix} a & b \\ 0 & a^{-1} \end{pmatrix}
\end{equation}
for every $g \in G$ where $a \in F^\times$ and $b \in F$.
Since $G$ is finite, every $a \in F^\times$ has odd order
and is contained in the relative closure $L$ of the prime field
$\mathbb{F}_2$ in $F$.  Observe that $L$ is a finite field.

If $b=0$ for every $g \in G$, then $G$ is a cyclic group of odd order
and $\rho_F$ is defined over $L$.  Since $\rho$ is defined over $L \cap
K$, and finite fields are perfect, $K=\bbk$ and $\rho$ is actually
defined over $\bbk$.
Thus, $G$ has representation dimension $2$ over $\bbk$.

Now, suppose there exists an element $g \in G$ for which $b \ne 0$.
Then $G$ contains a normal elementary $2$-subgroup $N$.
Since $K$ has characteristic $2$, every representation of $G$ has a
basis in which $N$ is strictly upper triangular;
in particular, this is true of $\rho$.
The only matrices that leave invariant $N$ must be of the form
\eqref{eq:upper_triangular} above.
This means that $\rho$ is already of the form
\eqref{eq:upper_triangular},
We can therefore assume $F=K$ and $L \subseteq \bbk$.

Thus, $G$ embeds into $K \rtimes L^\times$.
Since $K$ is an $L$-vector space and $G$ is finite,
we in fact have that $G$ is a subgroup of $L^n \rtimes L^\times$
where $L^\times$ acts on each summand $L$ by multiplication.
Since $\bbk$ is also an infinite-dimensional $L$-vector space, we see that
$L^n \rtimes L^\times$ is also a subgroup of $\bbk \rtimes \bbk^\times$.
Thus $G$ has an embedding into $\GL_2(\bbk)$.
(Note that the original representation $\rho$ may not descend to $\bbk$
in this case --- we may have to construct a new representation.)

Finally, we consider the case where $\rho$ is absolutely irreducible.
Let $\sigma$ be a representation of $G$ over $\bbk$
such that $\sigma_K$ contains $\rho$ as a subrepresentation.
We may assume $\sigma$ itself is simple.
In this case, the endomorphism algebra $A$ of $\sigma$ is a
separable field extension of $\bbk$.
Since $K$ splits $A$ and $K$ is inseparable,
we conclude that $A$ is already split.
Thus $\sigma$ is absolutely irreducible and we have $\sigma_K=\rho$.
Thus, the representation $\rho$ can be defined over $\bbk$.
\end{proof}

\section{Positive Characteristic}
\label{sec:poschar}

In this section, we prove Theorem~\ref{thm:infiniteness}.
This will be a consequence of the following:

\begin{thm}\label{posdim}
Let $\bbk$ be a field of positive characteristic. We define the group 
\[
G:= \bbk[x]\rtimes \bbk,
\] 
where the action of $\bbk$ on $\bbk[x]$ is given by $a\cdot f(x):= f(x+a)$.
Then there exists no $n$ such that every finite subgroup of $G$ can be
embedded into $\GL_n(\bbk)$.
\end{thm}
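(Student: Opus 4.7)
The plan is to construct finite subgroups $H_m\subset G$ with $\rdim_\bbk(H_m)\to\infty$. The key mechanism is this: in characteristic $p$, every finite $p$-subgroup $P$ of $\GL_N(\bbk)$ fixes a nonzero vector in $\bbk^N$ (the socle of any nonzero $\bbk[P]$-module contains only trivial simple modules, because $\bbk[P]$ is local with residue field $\bbk$), so inductively $P$ is conjugate in $\GL_N(\bbk)$ to a subgroup of the upper-unitriangular group $U_N(\bbk)$. In particular, $P$ has nilpotency class at most $N-1$, and when $\bbk$ is finite, $|P|$ divides $|U_N(\bbk)|=|\bbk|^{N(N-1)/2}$. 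A short computation shows $(f,a)^{p^2}=1$ in $G$, so every finite subgroup of $G$ is a $p$-group, and the above applies. The construction of $H_m$ splits according to whether $\bbk$ is finite.

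If $\bbk$ is finite, say $\bbk=\bbF_q$ with $q=p^e$, I take $H_n$ to be any $n$-dimensional $\bbF_p$-subspace of $(\bbk[x],+)\subset G$, abstractly $(\bbZ/p)^n$. The order constraint $p^n\leq q^{N(N-1)/2}$ above gives $\rdim_\bbk(H_n)\to\infty$ as $n\to\infty$.

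If $\bbk$ is infinite, then $[\bbk:\bbF_p]=\infty$, so for each $m$ I pick $\bbF_p$-linearly independent $a_1,\dots,a_m\in\bbk$ and form $A=\bbF_p\langle a_1,\dots,a_m\rangle\subset\bbk$, an elementary abelian $p$-group of rank $m$ acting on $\bbk[x]$ by translation. Let $n=p^m-1$, $f(x)=x^n$, and set $V\subset\bbk[x]$ to be the $\bbF_p$-span of $\{f(x+c):c\in A\}$. Put $H_m:=V\rtimes A$. Since $V$ is abelian normal and $A$ abelian acting on $V$, the lower central series is $\Gamma_k(H_m)=J^{k-1}V$, where $J\subset\bbF_p[A]\cong\bbF_p[s_1,\dots,s_m]/(s_i^p)$ is the augmentation ideal; hence the nilpotency class of $H_m$ equals the Loewy length of $V$ as an $\bbF_p[A]$-module.

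The main step is to show that $V$ is free of rank one over $\bbF_p[A]$, equivalently that the translates $\{(x+c)^n\}_{c\in A}$ are $\bbF_p$-linearly independent in $\bbk[x]$. This combines Lucas's theorem — since $n=p^m-1$ has all base-$p$ digits equal to $p-1$, $\binom{n}{k}\not\equiv 0\pmod{p}$ for every $0\leq k\leq n$ — with the classical Vandermonde invertibility of $[c^k]_{c\in A,\ 0\leq k<p^m}$, whose determinant is $\prod_{c\neq c'}(c-c')\neq 0$. Granted this, the Loewy length of $\bbF_p[A]$ equals $m(p-1)+1$, so $H_m$ has nilpotency class $m(p-1)+1$, forcing $\rdim_\bbk(H_m)\geq m(p-1)+2\to\infty$. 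I expect this Lucas/Vandermonde independence verification, and the translation of the resulting module-theoretic Loewy length into the nilpotency class of the semidirect product, to be the main technical obstacle.
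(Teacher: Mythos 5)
Your argument is correct, and for the infinite-field case it takes a genuinely different route from the paper. The paper handles finite $\bbk$ exactly as you do (arbitrarily large elementary abelian $p$-groups cannot fit in the finite group $\GL_n(\bbk)$), but for infinite $\bbk$ it argues by contradiction using Mal'cev's compactness theorem: if every finite subgroup embedded in $\GL_n(\bbk)$, then since every finitely generated subgroup of $G$ is finite, all of $G$ would embed in $\GL_n(K)$ for some field $K$ of characteristic $p$, and a strictly descending chain of centralizers $Z_1 \supsetneq Z_2 \supsetneq \cdots$ built from the polynomials $f_i = \prod_{g\in G_i}(x+g)$ then contradicts Noetherianity of the Zariski topology. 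Your proof instead exhibits explicit finite $p$-subgroups $H_m = V \rtimes A$ of unbounded nilpotency class and invokes the unitriangularizability of finite $p$-subgroups of $\GL_N$ in characteristic $p$ to force $\rdim_\bbk(H_m) \ge m(p-1)+2$; the Lucas/Vandermonde verification that the translates of $x^{p^m-1}$ are independent, and the identification $\Gamma_{k+1}(H_m) = J^k V$, both check out. This is essentially the explicit, invariant-theoretic route the paper mentions but declines to present (via Chen--Shank--Wehlau), and it buys quantitative lower bounds on representation dimension that the model-theoretic proof does not. What the paper's proof buys in exchange is a stronger structural conclusion recorded in its Remark~\ref{linearity}: for infinite $\bbk$ of positive characteristic the group $\bbk[x]\rtimes\bbk$ is not linear over \emph{any} field, whereas your nilpotency-class obstruction only rules out embeddings into $\GL_N(F)$ for $F$ of characteristic $p$. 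For the theorem as stated, either suffices.
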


Note that the same group $G$ defined over a field of characteristic $0$ has no non-trivial finite subgroup
since every non-trivial element has infinite order.
Thus, the hypothesis on the characteristic  is crucial.

To see how this implies Theorem~\ref{thm:infiniteness},
observe that $G$ can be embedded in group of automorphisms of the field
$\bbk(x,y)$ by
\begin{align*}
x &\mapsto x + a\\
y &\mapsto y + f(x)
\end{align*}
where $a \in \bbk$ and $f \in \bbk[x]$.

Since $G$ is a subgroup of $\Cr_2(\bbk)$,
Theorem~\ref{posdim} immediately implies that
$c_2(\bbk) = \infty$ for $\bbk$ of positive characteristic.
Since there are embeddings $\Cr_2(\bbk) \subseteq \Cr_n(\bbk)$
for all $n \ge 3$, we conclude that $c_n(\bbk) = \infty$ as well.
Thus, it remains to prove Theorem~\ref{posdim}.

A natural approach to proving the theorem is to explicitly determine the
representation dimensions of finite subgroups of $G$.
This can be carried out using Theorem~2.3 of \cite{ChenShankWehlau}
which is proved using techniques from modular invariant theory.
Their approach has the major advantage of finding representation dimensions
explicitly.
The first and third authors of the present paper independently found the
following alternative proof using model theory, which we think is interesting
enough to include here.

First, we recall a fundamental result in model theory due to Mal'cev \cite{Malcev}.

\begin{defn}
Let $\{x_i\}_{i\in I}$ be a set of variables. A {\it condition} is an expression of the form $F(x_{i_1},\dots, x_{i_k})=0$ or an expression of the form
\[
F_1(x_{i_1},\dots, x_{i_k})\neq 0\vee F_2(x_{i_k},\dots,
x_{i_k})\neq0\vee\dots\vee F_l(x_{i_1},\dots, x_{i_k})\neq 0\ ,
\]
where $F$ and the $F_i$ are polynomials with integer coefficients.
\end{defn}

\begin{defn}
	A set of conditions $S$ is {\it compatible} if there exists a field $\bbk$ which contains values $\{z_i\}_{i\in I}$ that satisfy $S$.
\end{defn}

\begin{thm}[\cite{Malcev}]\label{compact}
	A set of conditions $S$ is compatible if and only if every finite subset of $S$ is compatible.
\end{thm}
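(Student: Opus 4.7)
The plan is to derive Theorem~\ref{compact} from the compactness theorem of first-order logic, applied to the language of fields. I would fix the language $\mathcal{L}$ of rings expanded by a constant symbol $c_i$ for each variable $x_i$, $i \in I$, and let $T_{\mathrm{fld}}$ be the (finite) list of field axioms written in $\mathcal{L}$. Each condition in $S$ translates into a quantifier-free $\mathcal{L}$-sentence: an equation $F(x_{i_1}, \ldots, x_{i_k}) = 0$ becomes atomic, and a disjunction of inequalities becomes a disjunction of negated atomics, with the variables $x_i$ replaced by the constants $c_i$. Compatibility of $S$ then amounts exactly to the satisfiability of $T_{\mathrm{fld}} \cup S$, and similarly for finite subsets (since $T_{\mathrm{fld}}$ is itself finite). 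The theorem is an immediate consequence of first-order compactness.

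For a self-contained argument, I would use the ultraproduct construction directly. Assume every finite subset of $S$ is compatible. For each finite $S_0 \subseteq S$, choose a field $K_{S_0}$ and values $\{z_{i,S_0}\}_{i \in I}$ verifying all conditions in $S_0$. Let $\mathcal{P}$ denote the set of finite subsets of $S$. The principal up-sets $\{S_0 \in \mathcal{P} \mid S_0 \supseteq F\}$, for finite $F \subseteq S$, have the finite intersection property, so by the ultrafilter lemma they extend to an ultrafilter $\mathcal{U}$ on $\mathcal{P}$. I would then form the ultraproduct $K := \prod_{S_0} K_{S_0}/\mathcal{U}$, which is a field, and define $z_i \in K$ to be the class of the family $(z_{i,S_0})_{S_0 \in \mathcal{P}}$. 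To check that each $c \in S$ holds in $K$, note that $\{S_0 \in \mathcal{P} \mid c \in S_0\}$ lies in $\mathcal{U}$ and $c$ is satisfied in $K_{S_0}$ for every such $S_0$, so $c$ is satisfied in $K$.

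The verification that conditions transfer to the ultraproduct reduces to two elementary observations: a polynomial equation $F(z_{i_1}, \ldots, z_{i_k})=0$ holds in $K$ if and only if it holds in $K_{S_0}$ for $\mathcal{U}$-almost every $S_0$ (clear from the definition of $K$ as a quotient); and a finite disjunction of inequalities holds in $K$ iff one of the disjuncts does on a set in $\mathcal{U}$, which follows from the fact that a finite union of $\mathcal{U}$-small sets is $\mathcal{U}$-small. The main obstacle is not technical but foundational: the existence of the ultrafilter $\mathcal{U}$ relies on the ultrafilter lemma, a choice principle. Once this is granted, the restricted form of ``conditions'' considered here (atomics and finite disjunctions of negated atomics) makes the transfer step entirely mechanical, and no heavier fragment of \L{}o\'s's theorem is required.
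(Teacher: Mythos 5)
Your proposal is correct. Note that the paper does not actually prove this statement: it cites Mal'cev and merely remarks that the result is an instance of the compactness theorem of first-order logic (deducible, e.g., from completeness). Your first paragraph makes precisely that reduction explicit --- encoding each variable as a constant symbol, observing that the field axioms form a finite theory so that finite satisfiability of $T_{\mathrm{fld}} \cup S$ reduces to compatibility of finite subsets of $S$ --- so in spirit it is the same approach the paper gestures at. Your second and third paragraphs go further than the paper by supplying a self-contained ultraproduct proof; the transfer argument is sound (equations pass to the ultraproduct by definition of the quotient, and for a disjunction of inequalities the ultrafilter property forces one disjunct to hold on a large set, hence in $K$), and you correctly isolate the ultrafilter lemma as the only nonconstructive ingredient. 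Both routes are valid; the ultraproduct version has the minor advantage of making transparent exactly which fragment of \L{}o\'s's theorem is needed for the restricted class of ``conditions'' in Mal'cev's sense.
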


Theorem \ref{compact} is an easy but central result from model theory,  the so called compactness theorem. In its more general form it states that a set of first order sentences has a model if and only if each of its finite subsets has a model. It follows for example from the completeness of first order theories. 
We will use it in exactly the same way as Mal'cev originally used it:

\begin{lem}\label{embeddlem}
Let $G$ be a group and $n>0$ such that every finitely generated subgroup
of $G$ can be embedded into $\GL_n(\bbk)$ for some field $\bbk$ of
characteristic $p$.
Then there exists a field $K$ of characteristic $p$ such that $G$ can be
embedded into $\GL_n(K)$.
\end{lem}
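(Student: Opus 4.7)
The plan is to apply the compactness theorem (Theorem~\ref{compact}) to a system of conditions that encodes the data of a faithful representation $\rho \colon G \hookrightarrow \GL_n(K)$ over some field $K$ of characteristic $p$. The variables of the system will be $\{x^{g}_{i,j}\}_{g \in G,\, 1 \le i,j \le n}$, which are to be interpreted as the matrix entries of $\rho(g)$, together with variables $\{y^g\}_{g \in G}$ (thought of as $\det(\rho(g))^{-1}$) used to express invertibility as a polynomial identity.

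I would then write down the set $S$ of conditions in these variables:
\begin{itemize}
\item the characteristic condition $p = 0$ (as a polynomial with integer coefficients) when $p > 0$;
\item for the identity element $e \in G$, the equations $x^e_{i,j} = \delta_{i,j}$;
\item for every pair $g,h \in G$ and every $i,j$, the multiplicativity relation $x^{gh}_{i,j} = \sum_k x^g_{i,k} x^h_{k,j}$;
\item for every $g \in G$, the invertibility relation $y^g \cdot \det\bigl((x^g_{i,j})\bigr) = 1$, expressing $\rho(g) \in \GL_n$;
\item for every $g \in G$ with $g \ne e$, the disjunction $\bigvee_{i,j} \bigl(x^g_{i,j} - \delta_{i,j} \ne 0\bigr)$, ensuring faithfulness.
\end{itemize}
Each item is a condition in the sense of Mal'cev, since determinants expand to integer polynomials and all the relations use only integer coefficients.

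Next I would verify that every finite subset $S_0 \subseteq S$ is compatible. Such a finite subset mentions only finitely many elements $g_1, \dots, g_m \in G$, so let $H = \langle g_1, \dots, g_m \rangle$ be the finitely generated subgroup of $G$ they generate. By hypothesis, there is a field $\bbk$ of characteristic $p$ and an embedding $\rho_H \colon H \hookrightarrow \GL_n(\bbk)$. Assigning to $x^{g_\ell}_{i,j}$ the $(i,j)$-entry of $\rho_H(g_\ell)$, and to $y^{g_\ell}$ the inverse of its determinant, satisfies every condition in $S_0$ because all the group-theoretic products occurring in $S_0$ lie in $H$ and $\rho_H$ is a faithful homomorphism into $\GL_n(\bbk)$.

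By Theorem~\ref{compact}, the full set $S$ is then compatible; let $K$ be a witnessing field and $\{z^g_{i,j}\},\{w^g\}$ the witnessing values. The matrices $M_g := (z^g_{i,j})$ lie in $\GL_n(K)$ by the invertibility conditions, the assignment $g \mapsto M_g$ is a group homomorphism by the multiplicativity conditions, and it is injective by the faithfulness conditions; finally $K$ has characteristic $p$ by construction. The only real subtlety is writing the faithfulness requirement as a legitimate disjunction of inequalities (which is exactly why Mal'cev's notion of condition allows such disjunctions), and making sure the finite subgroup generated by the finitely many elements appearing in any $S_0$ is finitely generated — both are automatic from the setup.
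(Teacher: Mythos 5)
Your proposal is correct and follows essentially the same route as the paper: encode the matrix entries as variables, impose the multiplicativity, identity, faithfulness, and characteristic conditions, verify finite compatibility via the hypothesis on finitely generated subgroups, and conclude by Mal'cev's compactness theorem. The only cosmetic difference is your extra variables $y^g$ enforcing invertibility, which the paper omits because invertibility already follows from the multiplicativity and identity conditions applied to the pair $g$, $g^{-1}$.
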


\begin{proof}
	To every element $f\in G$ we associate a $n\times n$ matrix of variables $(x^f_{ij})$. Consider the following set $S$ of conditions:
	\begin{enumerate}
		\item   $(x^f_{ij})(x^g_{ij})=(x^h_{ij})$ for all $f,g,h\in G$ such that $fg=h$;
		\item $(\bigvee_{i} x_{ii}^g-1\neq 0)\vee (\bigvee_{i\neq j}x^g_{ij}\neq 0);$ for all $g\in G\setminus\{\id\}$. 
		\item $x_{ii}^{\id}=1$ and $x_{ij}^{\id}=0$ for all $1\leq i,j\leq n$, $i\neq j$;
		\item $p= 0$.
	\end{enumerate}
First we note that if $S$ is compatible, then there exists a
field $K$ such that we can associate to every element in $G$ a $n\times
n$-matrix $(z^g_{ij})$ with entries in $K$ such that the conditions
(a)-(d) are satisfied.
The conditions imply in particular that the
characteristic of $K$ is $p$, that the $(z^g_{ij})$ lie in $\GL_n(K)$ and that the map $g\mapsto (z^g_{ij})$ is an injective group homomorphism. 
	
In order to show that $S$ is compatible it is, by the compactness
theorem, enough to show that every finite subset of $S$ is compatible.
For this, let $s_1,\dots, s_k$ be finitely many conditions from $S$.
Then there are only finitely many elements $g_1,\dots, g_l$ from $G$
that appear in $s_1,\dots, s_k$. The finitely generated group $\langle
g_1,\dots, g_l\rangle\subset G$ can be embedded into $\GL_n(K)$ for some
field $K$ of characteristic $p$, hence, in particular, the set of
conditions $s_1,\dots, s_k$ is compatible.
\end{proof}

\begin{rem}
	Unfortunately, Lemma \ref{embeddlem} does not allow us to say
more about the field $K$ except that it has characteristic $p$. If $G$
is infinite one can choose $K$ to have the same cardinality as $G$. But otherwise one is bound to properties of fields that can be defined by first order languages.
\end{rem}

\begin{proof}[Proof of Theorem \ref{posdim}]
Let $p$ be the characteristic of $\bbk$.
If $\bbk$ is a finite field, then the group $G$ contains subgroups of
arbitrarily large order.
Indeed, $\bbk[x]$ contains subgroups isomorphic to $(\bbZ/p\bbZ)^m$ for
all $m$.
But $\GL_n(\bbk)$ is a finite group for every $n$.
It follows that for each $n$ there exist finite subgroups of $G$ that
cannot be embedded into $\GL_n(\bbk)$. 

Suppose now that $\bbk$ is infinite. Then there exists an infinite
strictly ascending chain of finite subgroups of $\bbk$:
\[
G_1\subsetneq G_2\subsetneq \cdots
\]
where we consider $\bbk$ as a subgroup of $G$ via its non-normal factor.
For each $i$ we define a polynomial $f_i$  in $\bbk[x]\subset G$ by
\[
f_i:=\prod_{g\in G_i} (x+g).
\] 
Choose $g_1\in G_1$ and $g_i\in G_i\setminus G_{i-1}$ for all $i\geq 2$. 
We observe that $g_j f_i =f_i g_j$ if $j\leq i$,
but $g_j f_i\neq f_i g_j$ if $j>i$.  

 Assume that there exists a $n$ such that every finite subgroup of $G$
can be embedded into $\GL_n(\bbk)$.
Since every finitely generated subgroup of $G$ is finite, this implies
by Theorem \ref{embeddlem} that there exists a field $K$ of
characteristic $p$ such that $G$ can be embedded into $\GL_n(K)$.
Let $\varphi\colon G\to \GL_n(K)$ be an injective homomorphism.
We define the following Zariski-closed subsets of $\GL_n(K)$ for
each $j \ge 1$:
 \[
 Z_j:=\left\{u\in \GL_n(K) \mid u\varphi(g)=\varphi(g)u
\textrm{ for all } g \in G_j
\right\}
 \]
By construction, $\varphi(f_i)\in Z_j$ if $j\leq i$, but $\varphi(f_i)\notin Z_j$ if $j>i$. This implies that 
\[Z_1\supsetneq Z_2\supsetneq Z_3\supsetneq\cdots
\]
 is an infinite strictly descending chain of non-empty Zariski-closed subsets. But this is not possible, since the Zariski topology is Noetherian. 
\end{proof}

\begin{rem}\label{linearity}
In fact, the proof of Theorem~\ref{posdim} shows that if $\bbk$
is of positive characteristic and infinite, then the group
$\bbk[x]\rtimes\bbk$ is not linear over any field. This yields in
particular a new proof that $\Aut(\mathbb{A}^2_{\bbk})$ and
$\Cr_2(\bbk)$ are not linear in this case - a result that has previously
been proven in \cite{CerveauDeserti}, \cite{Cornulier}, and
\cite{Mathieu}. In \cite{Mathieu} it is shown moreover that
$\Aut(\mathbb{A}_\bbk^2)$ is linear if $\bbk$ is finite. To our
knowledge, it is still an open question whether  $\Cr_2(\bbk)$ is linear
if $\bbk$ is finite. 
\end{rem}

\section{Bounds from Algebraic Tori}
\label{sec:torus}

Given a finitely-generated abelian group $A$ with an action of a finite group $G$,
the \emph{symmetric rank} of $A$ with respect to $G$,
denoted $\operatorname{symrank}(A,G)$,
is the minimal size of a $G$-stable generating set of $A$.
(This nomenclature is due to MacDonald~\cite{MacDonald}.)
The following conjecture is due to the second author:

\begin{conj}[B.~Heath] \label{conj:Bailey}
If $n$ is a positive integer, then the maximum
value of $m=\operatorname{symrank}(\bbZ^n,G)$
for any finite group $G$ is given by the following table:
\begin{center}
\begin{tabular}{l|ccccccc}
$n$ & $1$ & $2$ & $3$ & $4$ & $5$ & $6$ & $\ge 7$\\
\hline
$m$ & $2$ & $6$ & $12$ & $24$ & $40$ & $72$ & $2^n$\\
\end{tabular}
\end{center}
\end{conj}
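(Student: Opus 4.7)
The plan is to prove both halves of the equality $\max_G \operatorname{symrank}(\bbZ^n, G) = (\text{tabulated value})$. A first step is to reformulate the invariant: a $G$-stable generating set of size $m$ is the same as a $G$-equivariant surjection from a rank-$m$ permutation $G$-lattice onto $\bbZ^n$, so $\operatorname{symrank}(\bbZ^n, G)$ is the minimum rank of a permutation $G$-cover, equivalently the representation dimension of the algebraic torus with character lattice $\bbZ^n$ and Galois action $G$ (over a field realizing the action).

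For the lower bounds, I would exhibit explicit $(G, M)$ realizing each table entry. For $n \in \{1, 2, 3, 4, 5, 6\}$, take $M$ to be the root lattice of type $A_1, A_2, A_3, D_4, D_5, E_6$ respectively with $G$ its Weyl group. The roots form a single $G$-orbit of size $2, 6, 12, 24, 40, 72$ that generates $M$; the next nonzero orbit consists of strictly longer vectors, and in each of these root systems a direct check shows such vectors generate only proper sublattices, forcing $\operatorname{symrank}$ to equal the number of roots. For $n \geq 7$, take the ``checkerboard'' lattice $M = \{v \in \bbZ^n : v_i \equiv v_j \pmod{2}\}$ with $G = (\bbZ/2)^n$ acting by sign flips. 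The orbit of $(1, 1, \ldots, 1)$ has size $2^n$ and generates $M$; any $v \in M$ with some zero coordinate has all coordinates even by the congruence condition, hence lies in $2M$ and is zero in $M/2M$. Thus any stable generating set must contain an orbit not in $2M$, i.e., an orbit of a vector with all coordinates nonzero, which necessarily has size $2^n$.

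For the upper bounds, for $n \leq 6$ one would invoke the classification of conjugacy classes of finite subgroups of $\GL_n(\bbZ)$ (available through work of Dade and Plesken--Pohst). Since $\operatorname{symrank}(M, G) \geq \operatorname{symrank}(M, H)$ whenever $H \leq G$ (each $G$-stable set is $H$-stable), it suffices to check each maximal class and each of its full-rank sublattices, and to verify by direct computation that the root-lattice examples above are extremal. For $n \geq 7$, a uniform structural argument is required: one natural candidate is to decompose $M \otimes \bbQ$ into $\bbQ G$-isotypic components, bound the symmetric rank contribution of each factor, and recombine multiplicatively to obtain the exponential upper bound.

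The main obstacle is this last step for $n \geq 7$. The value $2^n$ matches the sign-flip construction, but there is no a priori reason it should serve as a universal ceiling: some exotic rank-$n$ $G$-lattice (analogous to the role of the exceptional root systems in small rank) could in principle support a larger minimal generating orbit. Ruling this out in full generality appears to require new input from the integral representation theory of finite subgroups of $\GL_n(\bbZ)$ beyond what the small-rank classifications provide.
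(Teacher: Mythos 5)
The statement you were asked to prove is labeled a \emph{conjecture} in the paper, and the paper offers no proof of it: the authors only remark that it is established in the second author's thesis for irreducible tori of dimension $n\le 11$ and for an infinite set of prime dimensions conditional on Artin's conjecture. What the paper actually proves is the different, weaker statement Theorem~\ref{thm:Bailey}: lower bounds on $\fdim_\bbk(\Aut(T))$ for rational tori, obtained by computing $\operatorname{symrank}(L/dL,W)$ for \emph{finite quotients} of root and weight lattices with $d\in\{2,3,4,6\}$ (Table~\ref{tbl:sym_rank}), checked by machine computation for $1\le n\le 6$ and by hand for the $\mathsf{B}_n$ weight lattice when $n\ge 7$. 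So there is no paper proof of the conjecture for your argument to be measured against, and your proposal does not close the gap either.

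Concretely, your proposal only sketches the lower bounds, which are the easy half and are consistent with the paper's table, though your exhibits differ in detail: the paper uses $\mathsf{G}_2$, $\mathsf{F}_4$ and the $\mathsf{B}_n$ weight lattice where you use $\mathsf{A}_2$, $\mathsf{D}_4$ and a checkerboard-type lattice, and even there the small-rank extremality checks are delegated to ``a direct check'' that the paper performs by computer. Your $n\ge 7$ justification also contains a slip: a vector of $M=2\bbZ^n+\bbZ(1,\dots,1)$ with a zero coordinate has all coordinates even and hence lies in $2\bbZ^n$, which is a proper index-$2$ sublattice of $M$, but it need \emph{not} lie in $2M$ (for instance $(2,0,\dots,0)\notin 2M$); the correct conclusion is that orbits of such vectors generate only $2\bbZ^n\subsetneq M$, which still forces an orbit of size $2^n$ into any stable generating set. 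More importantly, you candidly admit that the upper bound for $n\ge 7$ --- which is the entire content that makes this statement a conjecture rather than a theorem --- requires new input that you do not supply, and your proposed route through isotypic decomposition of $M\otimes\bbQ$ is only a candidate, not an argument. A proof attempt that establishes one inequality of an asserted equality and declares the other out of reach is not a proof; the honest assessment is that the statement remains open, and both your proposal and the paper prove at most the lower bound.
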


\begin{rem}
The values in Conjecture~\ref{conj:Bailey} were found by the second
author in his thesis on the representation dimension of algebraic tori
\cite{Bailey,Heath25},
where the conjecture is proved for irreducible tori
of dimension $n \le 11$ and a set of prime dimensions
that is infinite assuming Artin's conjecture on primitive roots. 
\end{rem}

In this section, we consider the representation dimensions of
automorphism groups of tori.
Recall that a \emph{split torus} is an algebraic group $T$ isomorphic
to $\bbG_{m,\bbk}^n$ for some $n$.
A \emph{torus} is an algebraic group $T$ such that $T
\times_{\Spec(\bbk)} K \simeq \bbG_{m,K}^n$ for some field extension
$K/\bbk$.

\begin{lem} \label{lem:toric2346}
Let $H$ be a finite subgroup of $\GL_n(\bbZ)$ and $d \in \{2,3,4,6\}$.
Then, over any field $\bbk$ of characteristic $\ne 2,3$,
there exists a rational torus $T$ of dimension $n$
such that $\fdim_\bbk(\Aut(T)) \ge \operatorname{symrank}((\bbZ/d\bbZ)^n,H)$.
\end{lem}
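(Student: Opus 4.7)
The strategy is to construct a rational torus $T$ on which $H$ acts by algebraic automorphisms and whose $d$-torsion is entirely $\bbk$-rational, then form the finite subgroup $G = T[d](\bbk) \rtimes H$ of $\Aut(T)$ and bound $\rdim_\bbk(G)$ from below by a weight-space Clifford argument. The main subtlety is to identify the $H$-module structure on the characters of $T[d](\bbk)$ so that it agrees with the given $H$-action on $(\bbZ/d\bbZ)^n$.

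For the construction, set $L = \bbk(\zeta_d)$ and $\Gamma = \operatorname{Gal}(L/\bbk)$. Since $d \in \{2,3,4,6\}$, the cyclotomic character $\chi_d : \Gamma \hookrightarrow (\bbZ/d\bbZ)^\times$ lands in $\{\pm 1\}$. I take $T$ to be the $\bbk$-torus whose character lattice is $M = \bbZ^n$, on which $H$ acts through its given embedding $H \hookrightarrow \GL_n(\bbZ)$ and $\Gamma$ acts by the scalar $\chi_d(\gamma) \in \{\pm 1\}$. Concretely, $T$ is a split torus when $\zeta_d \in \bbk$, and otherwise an $n$-fold product of norm-one tori $R^1_{L/\bbk}\bbG_m$ of the quadratic extension $L/\bbk$; in either case $T$ is $\bbk$-rational, as $R^1_{L/\bbk}\bbG_m$ is a conic with a $\bbk$-point. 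Because the $\Gamma$-action on $M$ is central in $\GL(M)$, it commutes with every element of $\GL_n(\bbZ)$, so the group of algebraic automorphisms of $T$ equals $\GL_n(\bbZ) \supseteq H$. To check that $T[d](\bbk) = (\bbZ/d\bbZ)^n$, write $T[d](\bar\bbk) = X_*(T)/d \otimes \mu_d$; on the first factor $\Gamma$ acts by $\chi_d(\gamma)$ (the contragredient of a $\pm I$-action, with $\pm 1$ self-inverse) and on the second also by $\chi_d(\gamma)$, giving total action $\chi_d(\gamma)^2 = 1$. The hypothesis $\cha(\bbk) \ne 2,3$ is used here to ensure $\mu_d$ is \'etale in each of the four cases.

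Form the finite subgroup $G := T[d](\bbk) \rtimes H$ of $\Aut(T)$, coming from translations by $d$-torsion composed with algebraic automorphisms. For any faithful $\bbk$-representation $V$ of $G$, base-change to $\bar\bbk$ and decompose $V_{\bar\bbk} = \bigoplus_\chi V_\chi$ into weight spaces for $N := T[d](\bbk)$, indexed by $\chi \in \widehat{N}$. The support $S = \{\chi : V_\chi \ne 0\}$ is $H$-stable (for the conjugation-contragredient action of $H$ on $\widehat{N}$) and must generate $\widehat{N}$, since otherwise the common kernel of $S$ would contain a nontrivial element of $N$ acting trivially on $V$. Hence $\dim V \ge |S| \ge \operatorname{symrank}(\widehat{N}, H)$. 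The final identification is via Cartier duality: the Cartier dual of $T[d] = X_*(T) \otimes \mu_d$ is $M/dM$, so $\widehat{T[d](\bar\bbk)} \cong M/dM$ carries the $H$-action inherited from $M$, which by construction is the given $H$-action on $(\bbZ/d\bbZ)^n$. Thus $\operatorname{symrank}(\widehat{N}, H) = \operatorname{symrank}((\bbZ/d\bbZ)^n, H)$. Setting up the torus so the given action lives on the character lattice $M$ rather than on $X_*(T)$ is what makes this final Cartier-dual identification come out correctly, and is the step requiring the most care.
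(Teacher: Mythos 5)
Your proof is correct and is essentially the same as the paper's: the norm-one tori $R^1_{\bbk(\zeta_d)/\bbk}\bbG_m$ you describe via character lattices are exactly the explicit conics $x^2+y^2=1$ and $x^2+3y^2=1$ the paper uses, and the lower bound via the $H$-stable generating set of characters of the $d$-torsion is the same weight-space argument. The only (harmless) difference is that you place the $H$-action on the character lattice to make the Cartier-dual identification literal, a duality point the paper glosses over.
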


\begin{proof}
We begin by constructing a one-dimensional torus $S$ depending on $d$.
If $d=2$, then $S=\bbG_m$ and we observe that $\bbZ/2\bbZ \subseteq S(\bbk)$.
If $d=4$, then $S$ is the group scheme defined by the equation of the unit circle
$x^2+y^2=1$ in $\mathbb{A}^2_\bbk$.  The point $(0,1)$ has order $4$,
thus  $\bbZ/4\bbZ \subseteq S(\bbk)$.
If $d=3$ or $6$, then $S$ is the group scheme defined by $x^2+3y^2=1$
with a $2$-dimensional representation given by
\[
\begin{pmatrix} x & -3y \\ y & x\end{pmatrix}.
\]
In this case, the point $(\frac{1}{2},\frac{1}{2})$ has order $6$,
so $\bbZ/6\bbZ \subseteq S(\bbk)$.
Now $\bbG_m$ is clearly $\bbk$-rational,
while the other two possibilities for $S$ are seen to be $\bbk$-rational
by embedding in $\bbP^2_\bbk$ and projecting away from the identity of
the torus.

Let $T=S^n$ be our rational torus of dimension $T$.
By construction, we have $A=(\bbZ/d\bbZ)^n \subseteq T(\bbk)$.
The group $\operatorname{GL}_n(\mathbb{Z})$ acts by group
automorphisms on $T$.
Thus $H$ acts on $T(\bbk)$ by group automorphisms.
The subgroup $A$ is the subgroup of elements of exponent $d$ in
$T(\bbk)$ and thus is characteristic.
Thus, we have a finite subgroup $G=A \rtimes H \subseteq \Aut(T)$.

Suppose $\rho$ is a faithful representation of $G$ over the algebraic closure
of $\bbk$.
The restriction $\overline{\rho}$ of $\rho$ to $A$ is a direct sum of one
dimensional characters in $A^\vee := \Hom(A,\bar{\bbk}^\times)$,
which are permuted by $H$.
Let $\Omega$ be the set of these characters.
Since $\overline{\rho}$ is faithful,
$\Omega$ must be an $H$-stable generating set for $A^\vee$.
Thus, $\rho$ has degree at least
$\operatorname{symrank}((\bbZ/d\bbZ)^n,H)$ as desired.
\end{proof}

\begin{thm} \label{thm:Bailey}
Over an arbitrary field $\bbk$ of characteristic $\ne 2,3$, there exists a
rational torus $T$ of dimension $n$ such that $\fdim_\bbk(\Aut(T)) \ge m$
where $m$ is as in Conjecture~\ref{conj:Bailey}.
\end{thm}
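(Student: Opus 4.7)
The plan is to apply Lemma~\ref{lem:toric2346} to suitable pairs $(H, d)$, where $H$ is a finite subgroup of $\GL_n(\bbZ)$ and $d \in \{2, 3, 4, 6\}$, with $\operatorname{symrank}((\bbZ/d\bbZ)^n, H) \ge m$ for the given value $m$ from Conjecture~\ref{conj:Bailey}.

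For the small dimensions $n \in \{1, \ldots, 6\}$, I would take $H$ to be the Weyl group of an irreducible rank-$n$ root system acting on its root lattice $L$, which is abstractly isomorphic to $\bbZ^n$. The relevant systems are $A_1, G_2, C_3, F_4, D_5, E_6$ for $n = 1, 2, 3, 4, 5, 6$ respectively; in each case a particular Weyl orbit of roots has cardinality exactly $m$ (the short roots for the non-simply-laced types, or the unique root orbit in the simply-laced cases), this orbit generates $L$, and it is the minimum-size generating $H$-orbit in $L$ since the roots are the shortest nonzero vectors of $L$. The modulus $d$ must be chosen so that: (a) no two distinct short roots become identified in $L/dL$, and (b) the long-root sublattice, of some index $k \in \{2, 3, 4\}$ in $L$, remains proper in $L/dL$. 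Condition (b) requires $\gcd(d, k) > 1$, and one checks that such $d \in \{2, 3, 4, 6\}$ always exists and simultaneously satisfies (a).

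For $n \ge 7$, I would realize $m = 2^n$ uniformly as follows. Take $H = \{\pm 1\}^n \rtimes S_n$ (the Weyl group of $B_n$) acting on the checkerboard sublattice
\[
L = \{v \in \bbZ^n : v_i \equiv v_j \pmod{2} \text{ for all } i, j\},
\]
which is isomorphic to $\bbZ^n$ as a $\bbZ$-module. The $H$-orbit of $(1, 1, \ldots, 1) \in L$ is the set $\{\pm 1\}^n$, of size $2^n$. Every element of $L$ is either contained in the proper sublattice $2\bbZ^n$ (of index $2$ in $L$) or has all coordinates odd; the smallest $H$-orbit of an all-odd vector is exactly $\{\pm 1\}^n$, so any $H$-stable generating subset of $L$ must include it. Choosing $d = 4$ preserves this index-$2$ obstruction in $L/dL$ since $2 \mid d$: the $2^n$ sign patterns remain mutually distinct modulo $4L$, while the image of $2\bbZ^n$ in $L/4L$ remains proper, forcing $\operatorname{symrank}(L/4L, H) \ge 2^n$.

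The main obstacle is verifying that the targeted orbit really realizes the minimum after reducing modulo $d$. Reduction can create unexpected small generating orbits by dissolving previously rigid obstructions; for instance, if the long-root sublattice has index $k$ in $L$ and $\gcd(d, k) = 1$, then the long roots begin to generate $L/dL$ and the bound fails. Careful matching of $d$ to the lattice-theoretic data of each Weyl group is therefore essential. The detailed case-by-case verifications, together with proofs of finer results toward Conjecture~\ref{conj:Bailey}, appear in the second author's thesis \cite{Bailey, Heath25}.
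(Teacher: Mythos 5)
Your proposal follows essentially the same route as the paper: it invokes Lemma~\ref{lem:toric2346} with Weyl groups acting on root lattices ($\mathsf{A}_1,\mathsf{G}_2$, a lattice equivalent to the paper's $\mathsf{A}_3$ choice, $\mathsf{F}_4,\mathsf{D}_5,\mathsf{E}_6$) for $n\le 6$, and the $\mathsf{B}_n$ weight (spinor) lattice with $d=4$ for $n\ge 7$, your ``checkerboard'' lattice being the paper's weight lattice rescaled by $2$. The one step you defer --- verifying for $n\le 6$ that no union of $W$-orbits of size less than $|\Omega|$ generates $L/dL$, for which your conditions (a) and (b) are necessary but not sufficient --- is exactly the finite check the paper performs by computer (Sage), so the two arguments coincide in substance.
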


\begin{proof}
We use Lemma~\ref{lem:toric2346} for all cases.
More precisely, for each dimension $n$, we compute
$m=\operatorname{symrank}(L/dL,W)$ where $d \in \{2,3,4,6\}$,
$L \cong \bbZ^n$, and $W$ is a subgroup of $\GL_n(\mathbb{Z})$.
In each case, $\Omega \subseteq L$ will be our $W$-stable generating
set of minimal size.

All our $L$ come from the theory of root systems where
$W$ is the associated Weyl group
(see e.g. \S{III}~of~\cite{Humphreys}).
Our choices for $\Omega$ and $L$ are essentially the
same as those used for symmetric rank of
$\mathbb{Z}^n$ in \cite{Bailey};
see also \cite{Lemire} for related work.
We summarize our choices in Table~\ref{tbl:sym_rank}.

\begin{table}[htb]
\caption{Symmetric Ranks of Root Lattices}
 \label{tbl:sym_rank}
    \centering
\begin{tabular}{c|c|c|c|c|c|}
$n$ & $L$ & $W$ & $\Omega$ & $|\Omega|$ & $d$ \\
\hline
$1$ & $\mathsf{A}_1$ root lattice & $2$ & roots & $2$ & 4 \\
$2$ & $\mathsf{G}_2$ root lattice & $D_{12}$ & short roots & $6$ & 4 \\
$3$ & $\mathsf{A}_3$ root lattice & $S_4$ & roots & $12$ & 4 \\
$4$ & $\mathsf{F}_4$ root lattice & $W(F_4)$ & short roots & $24$ & 4 \\
$5$ & $\mathsf{D}_5$ root lattice & $2^4 \rtimes S_5$ & roots & $40$ & 4 \\
$6$ & $\mathsf{E}_6$ root lattice & $W(E_6)$ & roots & $72$ & 3 \\
$\ge 7$ & $\mathsf{B}_n$ weight lattice & $2^n \rtimes S_n$ & spinors & $2^n$ & 4 \\
\end{tabular}

\end{table}

For any particular choice of $L$, $d$, and $W$,
finding the symmetric rank $\operatorname{symrank}(L/dL,W)$
is a finite problem.
Indeed, one simply enumerates all possible orbits of $W$ on
$L/dL$.  Then one finds the smallest union of orbits that generates
$L/dL$.
In the specific cases $1 \le n \le 6$ from Table~\ref{tbl:sym_rank}
it is even easier to check:
the union of all orbits of size less than $|\Omega|$ do not generate
$L/dL$, but $\Omega$ on its own does.
This was checked using \cite{sagemath}.

It remains to consider general $n$.
Here, we consider the weight lattice $L$ with $B_n$.
A convenient description of $L$ is as follows.
Let $W$ be the group $(\bbZ/2\bbZ)^n \rtimes S_n$,
which acts on $A=\bbZ^n$ by permuting basis vectors and
multiplying basis vectors by $-1$.
The lattice $L$ is the sublattice of $A \otimes_{\mathbb{Z}} \bbQ$
containing $A$ and all vectors of the form
\[
\left( \frac{a_1}{2}, \cdots, \frac{a_n}{2} \right)
\]
where $a_1,\ldots,a_n$ are all odd.
Any generating set of $L/4L$ must contain
a vector not contained in $A/4L$
and any $W$-stable generating set must contain all such vectors.
Thus $m=2^n$.
\end{proof}

\begin{rem}
The construction for $\mathsf{B}_n$ above has the following interpretation.
The odd spin group $\operatorname{Spin}_{2n+1}(\mathbb{R})$ has maximal
compact torus $(S^1)^n$ that is defined over $\mathbb{Q}$.
The normalizer of the maximal torus contains a finite subgroup
$(\mathbb{Z}/4\mathbb{Z})^n \rtimes
\left((\mathbb{Z}/2\mathbb{Z})^n \rtimes S_n \right)$
that is defined over $\mathbb{Q}$.
A minimal dimensional faithful representation of this
finite subgroup is obtained by restricting from the spin representation
of $\operatorname{Spin}_{2n+1}(\mathbb{R})$

The other finite groups are related to tori in adjoint semisimple algebraic groups,
but their minimal representation dimensions are not necessarily obtained by restriction.  For example, there is no $72$-dimensional representation of an adjoint form of $E_6$.
\end{rem}

\section{Projective spaces}
\label{sec:pgl}

In this section, we investigate the representation dimensions of finite
subgroups of of projective linear groups.  The cases of $\PGL_3(\bbk)$,
$\PGL_4(\bbk)$ will be of special interest in bounding $c_2(\bbk)$ and
$c_3(\bbk)$.

\begin{lem}
Suppose $p$ is an odd prime and $\bbk$ contains a primitive $p$th root of
unity.
Then there is a subgroup of $\PGL_p(\bbk)$ isomorphic to
$(C_p)^2 \rtimes \SL_2(\bbF_p)$.
\end{lem}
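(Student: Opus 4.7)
The plan is to embed both factors of the semidirect product directly: the $(C_p)^2$ as the image in $\PGL_p(\bbk)$ of a Heisenberg subgroup of $\GL_p(\bbk)$, and the $\SL_2(\bbF_p)$ as the lifts of its outer automorphisms.

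Fix a primitive $p$-th root of unity $\zeta \in \bbk$, let $D = \operatorname{diag}(1, \zeta, \zeta^2, \ldots, \zeta^{p-1})$, and let $P$ be the cyclic permutation matrix with $P e_i = e_{i-1}$ (indices mod $p$). A direct computation gives $PD = \zeta DP$, so the subgroup $H := \langle D, P\rangle \subseteq \GL_p(\bbk)$ is a Heisenberg group of order $p^3$ with centre $Z(H) = \langle \zeta I\rangle$. Its image $\overline{H}$ in $\PGL_p(\bbk)$ is elementary abelian of order $p^2$, supplying the $(C_p)^2$.

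The quotient $H/Z(H) \cong \bbF_p^2$ inherits the commutator as a non-degenerate alternating pairing with values in $Z(H) \cong \bbF_p$. Every automorphism of $H$ fixing $Z(H)$ pointwise preserves this pairing, and conversely every element of $\operatorname{Sp}(\bbF_p^2) = \SL_2(\bbF_p)$ lifts to such an automorphism (as one checks explicitly on the presentation $\langle x, y, z \mid [x,y]=z,\ x^p=y^p=z^p=1,\ z \text{ central}\rangle$); for $g \in \SL_2(\bbF_p)$, fix a choice $\phi_g$. Both $\rho \colon H \hookrightarrow \GL_p(\bbk)$ and $\rho \circ \phi_g$ are $p$-dimensional representations of $H$ with central character sending $\zeta I$ to $\zeta$. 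Assuming such a representation is unique up to isomorphism, there exists $U_g \in \GL_p(\bbk)$ with $U_g \rho(h) U_g^{-1} = \rho(\phi_g(h))$ for all $h \in H$, unique up to scalar by Schur's lemma. The assignment $g \mapsto [U_g]$ is then a homomorphism $\SL_2(\bbF_p) \to \PGL_p(\bbk)$ whose image normalizes $\overline{H}$ and acts on it by the natural faithful representation of $\SL_2(\bbF_p)$ on $\bbF_p^2$. The two images therefore intersect trivially in $\PGL_p(\bbk)$, and their product is the desired semidirect product.

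The main obstacle is the uniqueness step over the possibly non-algebraically closed field $\bbk$. Since $\bbk$ contains a primitive $p$-th root of unity, $\cha(\bbk) \ne p$, and a direct character computation shows that $\rho$ has trace zero on $H \setminus Z(H)$. Hence $\rho$ is absolutely irreducible, and over $\overline{\bbk}$ it is the unique irreducible representation of $H$ with the prescribed central character. So $\rho$ and $\rho \circ \phi_g$ are isomorphic over $\overline{\bbk}$; by Schur's lemma for absolutely irreducibles the space of $H$-intertwiners $\rho \to \rho \circ \phi_g$ is one-dimensional already over $\bbk$, so the intertwiner $U_g$ can be chosen in $\GL_p(\bbk)$, completing the construction.
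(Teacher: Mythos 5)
Your strategy is genuinely different from the paper's proof of this lemma: where the paper exhibits explicit matrices (a discrete Fourier transform matrix $F$ and a Gauss-sum-type matrix $V$) and verifies the $\SL_2(\bbF_p)$ relations by direct computation, you invoke the uniqueness of the irreducible representation of the Heisenberg group with prescribed central character and produce the normalizing elements abstractly as intertwiners. The Heisenberg subgroup, the absolute irreducibility via the vanishing of the character off the center, and the descent of the intertwiner to $\bbk$ via Schur's lemma for absolutely irreducible representations are all correct, and this last point handles the non-closed field cleanly.

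There is, however, one genuine gap: the assertion that $g \mapsto [U_g]$ is a homomorphism does not follow from what you have established. You only fix, for each individual $g$, \emph{some} automorphism $\phi_g$ of $H$ inducing $g$ on $H/Z(H)$. Two such lifts differ by an inner automorphism (the kernel of $\Aut_{Z}(H) \to \operatorname{Sp}(H/Z(H))$ is $\operatorname{Inn}(H) \cong \bbF_p^2$, by nondegeneracy of the commutator pairing), so for arbitrary choices one only gets $\phi_g\phi_h = \iota_n \circ \phi_{gh}$ for some $n \in H$, whence $[U_g][U_h] = [\rho(n)]\,[U_{gh}]$ in $\PGL_p(\bbk)$. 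That makes $g \mapsto [U_g]$ a homomorphism only modulo $\overline{H}$, which produces \emph{some} extension of $\SL_2(\bbF_p)$ by $(C_p)^2$, not necessarily the split one the statement requires. To repair this you must choose the lifts coherently, i.e.\ produce a genuine homomorphism $\SL_2(\bbF_p) \to \Aut(H)$ splitting the sequence $1 \to \operatorname{Inn}(H) \to \Aut_Z(H) \to \operatorname{Sp}_2(\bbF_p) \to 1$. Such a splitting exists precisely because $p$ is odd: $H$ has exponent $p$ (note $(PD)^p = \zeta^{p(p-1)/2}I = I$ since $(p-1)/2 \in \bbZ$), hence is isomorphic to the polarized Heisenberg group $\bbF_p^2 \times \bbF_p$ with product $(v,a)(w,b) = (v+w,\ a+b+\tfrac{1}{2}\omega(v,w))$, on which $\operatorname{Sp}_2(\bbF_p)$ acts by $(v,a) \mapsto (gv,a)$. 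This is exactly where the hypothesis that $p$ is odd enters your argument --- the same place where the paper's explicit relations for $F$ and $V$ break down at $p=2$. With that addition your proof is complete.
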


\begin{proof}
The following construction is closely related to the Weil
representation of $\SL_2(\bbF_p)$; it is certainly known to experts,
but we could not find a suitable reference.

Let $\zeta$ be the $p$th root of unity.  Let $e_0,\ldots,e_{p-1}$ denote
a basis for the vector space $W \cong \bbk^p$.
We will write $e_i$ for any integer
$i$ by considering $i \mod p$.
Let $P,D,F,V$ be matrices in $\GL(W)$ defined by their actions on $W$ as follows:
\begin{align*}
Pe_i &= e_{i-1}\\
De_i &= \zeta^i e_i\\
Fe_i &= \sum_{j=0}^{p-1} \zeta^{ij} e_j\\
Ve_i &= \sum_{j=0}^{p-1} \zeta^{v(i,j)} e_j
\end{align*}
where $v(i,j) = (i-j)(j-i-1)/2$.

First, one checks that $P^p=D^p=I$ and $PDP^{-1}D^{-1}=\zeta I$ where $I$ is the
identity matrix.
This means that the matrix group $N=\langle P, D \rangle$
has image $(C_p)^2$ in $\PGL(V)$.
Note that $N$ is an irreducible representation and the matrices
$P^aD^b$ for $a,b \in \bbZ/m\bbZ$ are a basis for the $p\times p$
matrix algebra $M_{p\times p}(\bbk)$.

Then one checks the following relations:
\[
FPF^{-1}=D^{-1}, \quad
FDF^{-1}=P, \quad
VPV^{-1}=P, \quad
VDV^{-1}=DP.
\]
(Note that this is where the fact that $p$ odd is used:
the relations are false for $p=2$).
Thus $N$ is a normal subgroup of $G=\langle P,D,V,F \rangle$.
Let $\psi : N \to \bbF_p^2$ be the group homomorphism where
$P$ and $D$ are mapped to the first and second basis vectors
respectively.
Then the induced conjugation action on $N$ gives a homomorphism
$\rho : G \to \GL_2(\bbF_p)$ where
\[
\rho(F) = \begin{pmatrix}
0 & 1 \\ -1 & 0
\end{pmatrix}
\textrm{ and }
\rho(V) = \begin{pmatrix}
1 & 1 \\ 0 & 1
\end{pmatrix} \ .
\]
It is well known that the image of $\rho$ is $\SL_2(\bbF_p)$.

Since the elements of $N$ are a generating set of $M_{p\times p}(\bbk)$
permuted (up to a scalar) by $G$, we see that the image of $G$
in $\PGL_p(\bbk)$ is $(C_p)^2 \rtimes \SL_2(\bbF_p)$ as desired.
\end{proof}

\begin{prop} \label{prop:pglp}
Suppose $p$ is an odd prime.
If $\bbk$ is a field of characteristic $0$ that contains a primitive $p$th
root of unity then $\fdim_\bbk(\PGL_p(\bbk))=p^2-1$.
\end{prop}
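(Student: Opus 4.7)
The plan is to establish matching bounds $\fdim_\bbk(\PGL_p(\bbk)) \ge p^2-1$ and $\fdim_\bbk(\PGL_p(\bbk)) \le p^2-1$.

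For the upper bound, I would use the adjoint representation of the algebraic group $\PGL_{p,\bbk}$. Since we are in characteristic $0$, $\PGL_{p,\bbk}$ is a semisimple algebraic group of adjoint type with trivial center, so its adjoint action on the Lie algebra $\mathfrak{pgl}_p$ is a faithful algebraic representation of dimension $p^2-1$ defined over $\bbk$. Restricting to any finite subgroup of $\PGL_p(\bbk)$ yields a faithful representation over $\bbk$ of dimension $p^2-1$, which gives the inequality $\fdim_\bbk(\PGL_p(\bbk)) \le p^2-1$.

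For the lower bound, I would invoke the preceding lemma to obtain a subgroup $H \cong A \rtimes \SL_2(\bbF_p)$ of $\PGL_p(\bbk)$, where $A = (C_p)^2$, and then prove $\rdim_\bbk(H) \ge p^2-1$. The key structural observation is that $A$ is a minimal normal subgroup of $H$: since $\SL_2(\bbF_p)$ acts transitively on $A \setminus \{0\} \cong \bbF_p^2 \setminus \{0\}$, the only $\SL_2(\bbF_p)$-stable subgroups of $A$ are $\{0\}$ and $A$ itself. Consequently, for any faithful representation $\rho \colon H \to \GL_N(\bbk)$, the subgroup $\ker(\rho) \cap A$ is normal in $H$ and contained in $A$, hence trivial, so $\rho|_A$ is faithful.

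Now, since $\bbk$ contains a primitive $p$th root of unity, $\rho|_A$ decomposes as a direct sum of characters of $A$; let $\Omega \subseteq A^\vee$ denote the set of characters appearing. Then $\Omega$ is stable under the dual action of $\SL_2(\bbF_p)$ on $A^\vee$, and faithfulness of $\rho|_A$ forces $\Omega$ to generate $A^\vee$, so $\Omega$ contains at least one nontrivial character. The dual action of $\SL_2(\bbF_p)$ on $A^\vee$ is equivalent to the standard action (inverse-transpose preserves $\SL_2(\bbF_p)$), and so acts transitively on the $p^2-1$ nontrivial characters; therefore $\Omega$ must contain all of them, yielding $N \ge p^2-1$. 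The main obstacle is really just the verification that $A$ is minimal normal and that transitivity passes to the dual, both of which are mild; the structural setup is what does the work.
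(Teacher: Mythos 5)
Your proposal is correct and follows essentially the same route as the paper: the adjoint representation gives the upper bound, and the subgroup $(C_p)^2\rtimes\SL_2(\bbF_p)$ supplied by the preceding lemma, together with the transitivity of $\SL_2(\bbF_p)$ on the nontrivial characters of $(C_p)^2$, gives the lower bound. The only difference is in the last step, where the paper invokes Serre's Proposition~24 on irreducible representations of semidirect products while you count the characters occurring in the restriction to the abelian normal subgroup directly (as in the paper's torus lemma); both are valid, and your minimal-normal-subgroup observation is harmless but superfluous since faithfulness of $\rho$ on $H$ already gives faithfulness on $A$.
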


\begin{proof}
It is clear that $p^2-1$ is an upper bound by taking the adjoint
representation of $\PGL_{p,\bbk}$.

By the above lemma,
there is a subgroup $G$ of $\PGL_p(\bbk)$
that is isomorphic to $(C_p)^2 \rtimes \SL_2(\bbF_p)$.
Let $N \cong (C_p)^2$ be the evident normal subgroup of $G$
and let $H$ be the factor group $G/N \cong \SL_2(\bbF_p)$.
Observe that $H$ acts transitively on the non-trivial elements of $N$.

Suppose $\rho$ is a faithful representation of $G$.
By Proposition~24~of~\cite{SerreLinear}, irreducible representations of semidirect
products with abelian normal subgroups are always constructed as
follows.  Take a character $\chi$ of $N$, take a representation $\tau$
of the stabilizer $S$ of the action of $H$ on $\chi$, then induce the
representation $\chi \otimes \tau$ of $NS$ up to $G$.

Since $H$ acts transitively on the non-trivial characters of $N$ in our
situation, any faithful representation of $G$ must have an irreducible
subrepresentation with
representation dimension equal or greater to the number of non-trivial
characters of $N$.
This gives the lower bound of $p^2-1$.
\end{proof}

\begin{rem}
Already for $n=5$, the upper bound from Proposition~\ref{prop:pglp} is
$24$, which is exactly equal to the lower bound from
Theorem~\ref{thm:Bailey}.
Thus, projective spaces are only potentially relevant for $c_n(\bbk)$
if $n \le 4$.
\end{rem}

\begin{prop} \label{prop:pgl3_no_root}
If $\bbk$ has characteristic $0$ and does not contain a primitive third
root of unity then $\fdim_\bbk(\PGL_3(\bbk)) \le 6$.
\end{prop}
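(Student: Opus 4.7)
The approach I would take is a case analysis on finite subgroups $G \subseteq \PGL_3(\bbk)$ based on the classical geometric classification, combined with the key observation that the hypothesis $\zeta_3 \notin \bbk$ precludes the embedding of $(\bbZ/3\bbZ)^2$ into $\PGL_3(\bbk)$.

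The first step is to establish this exclusion. Let $g \in \PGL_3(\bbk)$ be any element of order $3$. Lifting $g$ to $\tilde{g} \in \GL_3(\bbk)$, the condition $g^3 = 1$ in $\PGL_3$ forces $\tilde{g}^3 = \lambda I$ for some $\lambda \in \bbk^\times$, so the eigenvalues of $\tilde g$ in $\bar\bbk$ are $\alpha, \zeta_3 \alpha, \zeta_3^{-1} \alpha$ where $\alpha^3 = \lambda$. Since $\zeta_3 \notin \bbk$, at least two of these eigenvalues are Galois-conjugate over $\bbk$, and the centralizer of $g$ in $\PGL_3$ is a maximal torus $T$ that splits over an extension $K/\bbk$ of degree $2$ or $6$. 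A direct Galois-invariant computation shows that $T[3](\bbk) = \langle g \rangle \cong \bbZ/3\bbZ$; crucially, when $K/\bbk$ is cubic the numerical fact $[\bbk(\zeta_3):\bbk] = 2 \nmid 3 = [K:\bbk]$ forces $\zeta_3 \notin K$, so the $3$-torsion of $T$ is controlled. Hence no second commuting element of order $3$ exists in $\PGL_3(\bbk)$, and $(\bbZ/3\bbZ)^2$ does not embed in $\PGL_3(\bbk)$.

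This observation rules out the exceptional primitive finite subgroups of $\PGL_3(\bar\bbk)$: the Hessian groups (whose image modulo center contains $(\bbZ/3\bbZ)^2$ coming from the Heisenberg group of order $27$) and the Valentiner group $A_6$ (whose Sylow $3$-subgroup is $(\bbZ/3\bbZ)^2$). The remaining primitive possibilities are $A_5$ and $\PSL_2(\bbF_7)$, for which the complex $3$-dimensional irreducible representations are Galois-conjugate over $\bbQ(\sqrt{5})$ and $\bbQ(\sqrt{-7})$ respectively; each therefore admits a faithful $\bbk$-representation of dimension at most $6$. An intransitive subgroup fixing a point of $\bbP^2_\bbk$ is, by Levi decomposition in characteristic $0$, conjugate into a Levi subgroup isomorphic to $\GL_2$, yielding $\rdim_\bbk(G) \le 2$. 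For imprimitive subgroups lying in the normalizer of a maximal torus $T$, I would lift to a monomial representation in $\GL_3(K)$ where $K/\bbk$ is the splitting field of $T$, and apply restriction of scalars; the $(\bbZ/3\bbZ)^2$-exclusion controls the torus-intersection part of $G$ sufficiently to keep the total dimension within $6$.

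The main obstacle is the imprimitive case, which requires careful handling of the Galois-twisted torus structure and the interaction between the torus part $G \cap T(\bbk)$ and the permutation quotient $G/(G \cap T) \subseteq S_3$. The key technical work lies in showing that the finite subgroups of the various twisted tori admit faithful $\bbk$-representations of dimension at most $6$ after accounting for the permutation action, which I would do using induced representations and the constraints imposed by the exclusion of $(\bbZ/3\bbZ)^2$.
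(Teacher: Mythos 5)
Your overall skeleton matches the paper's (split into reducible, imprimitive, and primitive cases; run through Blichfeldt's list; kill the Hessian-type groups by showing $(\bbZ/3\bbZ)^2$ does not embed in $\PGL_3(\bbk)$), and several pieces are fine: the reducible case via the Levi $\cong \GL_2$ is equivalent to the paper's twist $\sigma^\vee\otimes\tau$, the treatment of $A_5$ and $\PSL_2(\bbF_7)$ works, and using the $C_3^2$-exclusion to also dispose of $A_6$ is a harmless variant (the paper instead uses the rational $5$-dimensional representation of $A_6$). But there are two genuine gaps. First, your proof of the exclusion itself rests on the claim that the centralizer of an order-$3$ element $g$ is a maximal torus $T$, so that computing $T[3](\bbk)$ settles the matter. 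That is false for the adjoint group $\PGL_3$: the centralizer of a regular element of order $3$ is disconnected with component group $C_3$, because any $H$ with $H\widetilde{g}H^{-1}=\zeta_3\widetilde{g}$ also centralizes $g$ in $\PGL_3$ --- and over $\bbC$ this non-identity component is exactly where the second generator of the Heisenberg $C_3^2$ lives. So bounding the $\bbk$-rational $3$-torsion of $T$ does not rule out a second commuting element of order $3$. The argument can be repaired: if $a,b\in\PGL_3(\bbk)$ commute, lifts $A,B\in\GL_3(\bbk)$ satisfy $[A,B]=\mu I$ with $\mu\in\bbk$ and $\mu^3=1$, forcing $\mu=1$ since $\zeta_3\notin\bbk$; then $A,B$ commute, are simultaneously diagonalizable with eigenvalue ratios in $\mu_3$, and generate a cyclic group modulo scalars. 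But that commutator computation is precisely the paper's proof and is the step where the hypothesis does its work; your torus computation does not substitute for it.

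Second, the imprimitive case --- which you yourself flag as the main obstacle --- is not actually carried out, and the tool you propose (restriction of scalars along the splitting field $K$ of the twisted torus, with $[K:\bbk]$ up to $6$) would a priori produce a representation of dimension $3[K:\bbk]\le 18$, not $6$. The paper sidesteps all Galois-twisting here: if the $3$-dimensional representation $\rho$ of the preimage $\widetilde{G}\subseteq\GL_3(\bbk)$ is imprimitive over $\bbk$, it is induced from a $1$-dimensional $\bbk$-character of an index-$3$ subgroup, hence monomial over $\bbk$, so $\widetilde{G}$ lies in $\bbG_m(\bbk)^3\rtimes S_3$; one then writes down an explicit homomorphism $\bbG_m^3\rtimes S_3\to\GL_6$ with kernel the diagonal $\bbG_m$, namely $(\lambda_1,\lambda_2,\lambda_3)\mapsto(\lambda_i\lambda_j^{-1})_{i\ne j}$ extended by permutations, which gives the faithful $6$-dimensional representation of $G$ directly. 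Until you supply an argument of comparable precision for this case, and correct the centralizer claim, the proposal does not constitute a proof.
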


\begin{proof}
Let $G$ be a finite subgroup of $\PGL_3(\bbk)$.
Let $\widetilde{G}$ be the preimage of $G$ in $\GL_3(\bbk)$
and let $\rho$ be the corresponding representation.

If $\rho$ is reducible, then $\rho = \sigma \oplus \tau$
where $\sigma$ has degree $1$ and $\tau$ has degree $2$.
Note that the dual representation $\sigma^\vee$ is
simply the homomorphism $\widetilde{G} \to k^\times$
given by $\sigma^\vee(g) = \sigma(g)^{-1}$.
The representation $\sigma^\vee \otimes \tau$ factors
through the homomorphism $\widetilde{G} \to G$ and provides a
faithful two-dimensional representation of $G$.

Now suppose that $\rho$ is irreducible.
Suppose moreover that $\rho$ is \emph{imprimitive};
this means $\rho$ is induced from a representation $\sigma$ of a proper subgroup $H$.
Observe that $\sigma$ must be $1$-dimensional.
Thus $\rho$ is monomial and $\widetilde{G}$ is a subgroup of
$\bbG_m^3 \rtimes S_3$.
We claim that there is a group scheme morphism
\[
\psi : \bbG_m^3 \rtimes S_3 \to \bbG_m^6 \rtimes S_6 \subset \GL_6
\]
that has kernel the diagonal $\bbG_m$.
Indeed, on diagonal matrices we define
\[
\psi(\lambda_1,\lambda_2,\lambda_3) =
(\lambda^{}_1\lambda_3^{-1},
\lambda^{}_2\lambda_3^{-1},
\lambda^{}_1\lambda_2^{-1},
\lambda^{}_3\lambda_2^{-1},
\lambda^{}_2\lambda_1^{-1},
\lambda^{}_3\lambda_1^{-1})
\]
and the permutation matrices are then determined.
Thus $\psi(\widetilde{G})=G$ and the group $G$
has a representation of degree at most $6$.

It remains to consider the \emph{primitive} subgroups
of $\PGL_3(\bbk)$.  Over $\bbC$, these were classified by Blichfeldt in
\cite{Blichfeldt}.
Up to conjugacy, the primitive subgroups of $\PGL_3(\bbC)$ are
\begin{enumerate}
\item the alternating group $A_5$ on $5$ letters,
\item the alternating group $A_6$ on $6$ letters,
\item the simple group $\PSL_2(\bbF_7) \cong \PSL_3(\bbF_2)$ of order $168$,
\item the Hessian group $C_3^2 \rtimes \SL_2(\bbF_3)$, and
\item subgroups $C_3^2 \rtimes \bbZ/4\bbZ$
and $C_3^2 \rtimes Q_8$ of the Hessian group.
\end{enumerate}

Each of these groups may or may not exist in $\PGL_3(\bbk)$.
Note that $A_5$ and $A_6$ have faithful representations of dimension $4$
and $5$ respectively that are defined over $\bbQ$.
The group $G=\PSL_3(\bbF_2)$ acts faithfully on the $7$ points of
$\bbP^2(\bbF_2)$, so there is a $7$-dimensional permutation
representation of $G$.  The trivial representation is a direct
summand, so there is a $6$-dimensional faithful representation of
$G$ over $\bbQ$.
Regardless of $\bbk$, these groups have representation dimensions at
most $6$.

It remains to consider the Hessian group and its subgroups.
We will show that no subgroup of $\PGL_3(\bbk)$ contains $C_3^2$
and thus none of the primitive subgroups of the Hessian are realizable
over $\bbk$.

Suppose otherwise; that $a,b \in \PGL_3(\bbk)$ generate a subgroup $G$
isomorphic to $C_3^2$.
Let $A,B \in \GL_3(\bbk)$ be lifts of $a,b$ respectively.
Over the algebraic closure, we may choose scalar multiples
$\widetilde{A},\widetilde{B}$ of $A,B$
that lie in $\SL_3(\overline{\bbk})$ and generate a finite subgroup.

The characteristic polynomial of $A$ is
$x^3-\lambda$ where $\lambda \in \bbk$.  Since $\bbk$ does not contain a
non-trivial third root of unity, the eigenvalues of $A$ must be
distinct.  The eigenvalues of $\widetilde{A}$ must therefore be
$1,\zeta_3,\zeta_3^2$.
The same is true of $B$ and $\widetilde{B}$.

Observe that the commutator $ABA^{-1}B^{-1}=\lambda I_3$
is a scalar matrix where $\lambda \in \bbk$ is independent of choice of
lift.
Since $\widetilde{A},\widetilde{B}$ have the same commutator, which must
be in $\operatorname{SL}_3(\bbk)$ and have finite order,
we have $\lambda=1$.
We conclude that $\widetilde{A},\widetilde{B}$ commute
and are therefore simultaneously diagonalizable.
Up to scalar multiples, there are exactly two diagonal
matrices of order $3$ with distinct eigenvalues.
Thus $\widetilde{A}$,$\widetilde{B}$ generate a cyclic group,
which contradicts that $G \cong C_3^2$.
\end{proof}

\begin{prop} \label{prop:pgl4}
If $\bbk$ has characteristic $0$ and
contains a primitive $4$th root of unity, then
$\fdim_\bbk(\PGL_4(\bbk))=15$.
\end{prop}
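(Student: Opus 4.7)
The upper bound $\fdim_\bbk(\PGL_4(\bbk)) \le 15$ is immediate from the $15$-dimensional adjoint representation of $\PGL_{4,\bbk}$, which is faithful and defined over any field. For the lower bound, my plan is to exhibit a finite subgroup $G \subseteq \PGL_4(\bbk)$ with a normal elementary abelian $2$-subgroup $N \cong (\bbZ/2\bbZ)^4$ whose $15$ non-trivial elements form a single orbit under conjugation by $G/N$. Once such a $G$ is in hand, the same Clifford-theoretic argument as in the proof of Proposition~\ref{prop:pglp} applies verbatim: invoking Proposition~24 of \cite{SerreLinear}, any faithful $\bbk$-representation of $G$ must contain an irreducible subrepresentation whose restriction to $N$ is the sum over a non-trivial $G/N$-orbit of characters, and that orbit has size $15$. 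Hence $\rdim_\bbk(G) \ge 15$.

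For the construction of $N$, I would use the hypothesis $i \in \bbk$ essentially by introducing the Pauli matrices $X = \begin{pmatrix} 0 & 1 \\ 1 & 0 \end{pmatrix}$ and $Z = \begin{pmatrix} 1 & 0 \\ 0 & -1 \end{pmatrix}$ in $\GL_2(\bbk)$, which satisfy $X^2 = Z^2 = I$ and $XZ = -ZX$. The four tensor products $X \otimes I$, $Z \otimes I$, $I \otimes X$, $I \otimes Z$ in $\GL_4(\bbk)$ project to commuting involutions in $\PGL_4(\bbk)$ generating $N \cong (\bbZ/2\bbZ)^4$, and the commutator pairing lifted to $\mu_2 \subset \bbk^\times$ equips $N$ with a non-degenerate symplectic form, identifying its symmetry group with $\operatorname{Sp}_4(\bbF_2) \cong S_6$.

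To realize $G$, I would adjoin to $N$ the images in $\PGL_4(\bbk)$ of the standard two-qubit Clifford generators: the Hadamard matrix $H = \begin{pmatrix} 1 & 1 \\ 1 & -1 \end{pmatrix}$ and the Phase matrix $S = \begin{pmatrix} 1 & 0 \\ 0 & i \end{pmatrix}$ (each tensored with the identity on either factor), together with the CNOT matrix. All have entries in $\bbQ(i) \subseteq \bbk$, with $i$ needed only for the Phase matrix. A direct computation shows each Clifford generator normalizes $N$, and the resulting conjugation action of $G/N$ on $N$ realizes the full symplectic group $\operatorname{Sp}_4(\bbF_2)$, which acts transitively on the $15$ non-identity vectors of $N \cong \bbF_2^4$.

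The main obstacle is verifying that the images of the Clifford generators in $\operatorname{Sp}_4(\bbF_2)$ indeed generate the entire group and not merely a proper subgroup acting intransitively on $N \setminus \{1\}$. This reduces to computing the conjugation action of each generator on the Pauli basis $\{X \otimes I, Z \otimes I, I \otimes X, I \otimes Z\}$, writing down the resulting $4 \times 4$ matrices over $\bbF_2$, and checking that they generate all of $\operatorname{Sp}_4(\bbF_2)$. The calculation is elementary but requires careful bookkeeping; alternatively, one may appeal to the classical fact that the normalizer in $\GL_4(\bbk)$ of the two-qubit Pauli group projects in $\PGL_4(\bbk)$ to an extension of $\operatorname{Sp}_4(\bbF_2)$ by $N$.
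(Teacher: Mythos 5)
Your argument is correct and reaches the lower bound by a genuinely different route from the paper. The paper handles the lower bound by citation: it takes the Shephard--Todd reflection group $G_{29} \subseteq \GL_4(\bbQ(i))$, observes that its image $H$ in $\PGL_4(\bbk)$ is the quotient by a central $\bbZ/4\bbZ$, and reads off $\rdim_\bbk(H)=15$ from Benard's tables. Your group --- the image of the two-qubit Pauli group together with the Clifford generators, an extension of $\operatorname{Sp}_4(\bbF_2)\cong S_6$ by $N\cong(\bbZ/2\bbZ)^4$ of order $11520$ --- has the same order as $H$ and is very likely the same group; the difference is that you certify the bound internally by Clifford theory, exactly parallel to the paper's own treatment of $\PGL_p(\bbk)$ for odd $p$ in Proposition~\ref{prop:pglp}. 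Your approach buys self-containedness and uniformity with the odd-prime case; the paper's buys brevity. Two small repairs are needed. First, Proposition~24 of \cite{SerreLinear} is stated for semidirect products, and you have not checked that your extension of $G/N$ by $N$ splits; invoke Clifford's theorem directly instead (the restriction of an irreducible to a normal subgroup is a multiple of a sum over a single $G$-orbit of constituents), which requires no splitting, and note that transitivity on $N\setminus\{1\}$ transfers to transitivity on the non-trivial characters via the $G$-equivariant self-duality given by the symplectic form. Second, the generation question you flag is genuinely a gap as written, but it is easy to close, and you only need transitivity rather than all of $\operatorname{Sp}_4(\bbF_2)$: the one-qubit generators $H,S$ on each factor already induce $S_3\times S_3$, whose orbits on $N\setminus\{1\}$ have sizes $3$, $3$, and $9$, and conjugation by $\mathrm{CNOT}$ sends $X\otimes I\mapsto X\otimes X$ and $I\otimes Z\mapsto Z\otimes Z$, fusing all three orbits into a single orbit of size $15$. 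With those two adjustments the proof is complete.
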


\begin{proof}
It is clear that $15$ is an upper bound by taking the adjoint
representation of $\PGL_{4,\bbk}$.
Consider the pseudoreflection group $G_{29}$
from the Shephard-Todd classification, which is defined over the field
$\bbQ(i)$ (see \S{8} of \cite{Benard}).
The group $G_{29}$ is a subgroup of $\GL_4(\bbk)$ where the
image $H \subseteq \PGL_4(\bbk)$ is a quotient by $\bbZ/4\bbZ$.
From Table~V~of~\cite{Benard}, we observe that $H$ has representation dimension $15$.
\end{proof}

\section{Weighted Projective Spaces}
\label{sec:wps}

We recall the theory of weighted projective spaces; the standard
reference is \cite{Dol82Weighted}
(see also Appendix~A.2~of~\cite{PSJordan3}).

Given a finite set of positive integers $q_1,\ldots,q_s$
(called \emph{weights}),
let $S=\bbk[x_1,\ldots,x_s]$ be the graded polynomial ring
where $\deg(x_i)=q_i$.
The \emph{weighted projective space} $\mathbb{P}(q_1,\ldots,q_s)$
is the projective variety $\operatorname{Proj}(S)$;
the ring $S$ is the \emph{Cox ring} of the space.
Equivalently,
$\mathbb{P}(q_1,\ldots,q_s)$ is the universal geometric quotient of
$\mathbb{A}^s_\bbk \setminus \{ 0 \}$ by $\bbG_m$
where the action is given by
\[
t \cdot (x_1,\ldots,x_s) \mapsto (t^{q_1}x_1,\ldots,t^{q_s}x_s)\ .
\]

The ordering of $q_1,\ldots,q_s$ is unimportant.  Every weighted
projective space is isomorphic to a \emph{well-formed} weighted projective space
where every subset $Q'$ of $s-1$ weights has $\gcd(Q')=1$.
A convenient shorthand
is to write $\mathbb{P}(d_1^{m_1}:\ldots:d_r^{m_r})$ for the weights
where each $d_i$ occurs $m_i$ times.

The automorphism group schemes of weighted projective spaces are well understood:

\begin{lem} \label{lem:wp_aut}
Suppose $X=\mathbb{P}(d_1^{m_1}:\ldots:d_r^{m_r})$ with Cox ring $S$.
Then
\[
\sheafAut(X) \cong U \rtimes \left(\left(\prod_{i=1}^r
\GL_{m_i}\right)\middle/\bbG_{m}\right)
\]
where $U$ is unipotent and $\bbG_m$ acts via
\[ t \mapsto (t^{d_1} I_{m_1}, \ldots, t^{d_r} I_{m_r}) \ . \]
\end{lem}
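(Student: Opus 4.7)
The plan is to realize $\sheafAut(X)$ as a quotient of the graded automorphism group scheme of the Cox ring $S$ by the one-parameter subgroup that defines the grading, and then to exhibit the Levi decomposition of the latter.

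First, I would establish a short exact sequence of $\bbk$-group schemes
\[
1 \to \bbG_m \to \mathcal{G} \to \sheafAut(X) \to 1,
\]
where $\mathcal{G}$ denotes the group scheme of graded $\bbk$-algebra automorphisms of $S$ and the $\bbG_m$ is the grading torus acting by $t \cdot x_{i,j} = t^{d_i} x_{i,j}$. Every graded automorphism of $S$ descends to an automorphism of $X = \mathrm{Proj}(S)$, and the kernel is precisely this grading torus. For the surjectivity I would invoke the standard fact for well-formed weighted projective spaces (see \cite{Dol82Weighted}) that every automorphism of $X$ lifts to a graded automorphism of its Cox ring.

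Second, I would describe $\mathcal{G}$ explicitly. Reorder the weights so that $d_1 < d_2 < \cdots < d_r$. A graded automorphism $g$ of $S$ must send each $x_{i,j}$ to a homogeneous polynomial of weight $d_i$; since a monomial in the $x_{k,\ell}$ of total weight $d_i$ either is one of the $x_{i,\ell}$ (if any $x_{k,\ell}$ of weight $d_i$ appears) or is a polynomial purely in variables of smaller weight, one may write
\[
g(x_{i,j}) = \sum_\ell a_{i,j,\ell}\, x_{i,\ell} + p_{i,j},
\]
where $a_{i,j,\ell} \in \bbk$ and $p_{i,j}$ is a homogeneous polynomial of degree $d_i$ in variables of strictly smaller weight. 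Reading off the leading linear parts defines a group-scheme homomorphism $\mathcal{G} \to L := \prod_{i=1}^r \GL_{m_i}$; the evident block-diagonal inclusion is a splitting, and the kernel $U$ consists of automorphisms with $a_{i,j,\ell} = \delta_{j\ell}$ and arbitrary $p_{i,j}$. Filtering $U$ by the maximum weight of variables allowed in the $p_{i,j}$ presents it as an iterated extension of vector groups $\bbG_a^N$, so $U$ is unipotent and $\mathcal{G} = U \rtimes L$.

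Third, I would identify the grading torus inside $\mathcal{G}$ and pass to the quotient. Since $t$ scales $x_{i,j}$ by $t^{d_i}$, the grading is exactly the image of $t \mapsto (t^{d_1} I_{m_1}, \ldots, t^{d_r} I_{m_r})$ in $L$. A direct calculation shows it is central in $\mathcal{G}$: for $u \in U$ with $u(x_{i,j}) = x_{i,j} + p_{i,j}$, the polynomial $p_{i,j}$ is itself homogeneous of weight $d_i$, so the grading scales it by the same factor $t^{d_i}$ and the two operations commute. Dividing out the central $\bbG_m$ then yields $\sheafAut(X) = U \rtimes (L/\bbG_m)$, with the embedding as stated.

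The main obstacle is the lifting statement used in the first step: that every automorphism of the (well-formed) weighted projective space $X$ genuinely arises from a graded automorphism of its Cox ring, working at the level of group schemes rather than just $\bbk$-points. This is classical but requires some care because $X$ typically has cyclic quotient singularities. Once this is in hand, the remainder of the argument is a bookkeeping exercise about graded ring homomorphisms.
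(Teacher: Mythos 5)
Your argument is correct and is essentially the same one underlying the paper's proof, which simply cites the Cox-ring description of automorphism groups of toric varieties (\S 4 of \cite{Cox}, Proposition~A.2.5 of \cite{PSJordan3}, and \cite{LiendoLA} for arbitrary fields) rather than writing out the semidirect-product bookkeeping you carry out. The one genuinely nontrivial input --- that for a \emph{well-formed} weight vector every automorphism of $X$ lifts to a graded automorphism of $S$ at the level of group schemes over an arbitrary field, with kernel exactly the grading $\bbG_m$ --- is precisely what those references supply, and you are right both to flag it as the crux and to insist on well-formedness, without which the statement fails (e.g.\ $\mathbb{P}(1,2)\cong\mathbb{P}^1$ has $\sheafAut$ equal to $\PGL_2$, not the Borel subgroup your computation would produce from the weights $(1,2)$).
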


\begin{proof}
This follows from the more general description for automorphism groups
of toric varieties from Section~4~of~\cite{Cox}.
Details for the special case of weighted projective spaces can be found
in Proposition~A.2.5~of~\cite{PSJordan3}.
Both these sources work over algebraically closed fields of
characteristic $0$, but
the result holds over any field; see \cite{LiendoLA}.
\end{proof}

In general, the representation dimension of the group scheme
$\sheafAut(X)$ (or its reductive quotient)
is a delicate function of the weights.
We will only need a relatively simple case in this paper:

\begin{lem} \label{lem:wp_rdim}
Suppose $\bbk$ has characteristic $0$.
If $X$ is the weighted projective space $\mathbb{P}(1^2:n^m)$, then
\[
\fdim_{\bbk}( \Aut( X ) ) \le
\begin{cases}
2m & \textrm{if $n$ is odd,}\\
3m & \textrm{if $n$ is even.}
\end{cases}
\]
\end{lem}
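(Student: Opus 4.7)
The plan is to reduce to the reductive quotient of $\sheafAut(X)$ and then exhibit explicit small faithful representations of it.

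First, I would invoke Lemma~\ref{lem:wp_aut} to write $\sheafAut(X) \cong U \rtimes H$, where $U$ is unipotent and $H = (\GL_2 \times \GL_m)/\bbG_m$ with $\bbG_m$ embedded as $t \mapsto (tI_2, t^n I_m)$. Taking $\bbk$-points gives $\Aut(X) = U(\bbk) \rtimes H(\bbk)$, and in characteristic $0$ the group $U(\bbk)$ is torsion-free, so every finite subgroup of $\Aut(X)$ injects into $H(\bbk)$. It therefore suffices to exhibit a faithful algebraic representation of $H$ of dimension $2m$ if $n$ is odd and $3m$ if $n$ is even.

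Next, I would look for representations of the form $\rho_1 \boxtimes \sigma$ of $\GL_2 \times \GL_m$, where $\sigma$ is the standard $m$-dimensional representation of $\GL_m$ (central weight $1$). For such a product to descend along the quotient by $\bbG_m$, the central weight of $\rho_1$ on $\GL_2$ must equal $-n$. An irreducible $\rho_1 = \operatorname{Sym}^k(\bbk^2) \otimes \det^\ell$ has central weight $k+2\ell$, so the constraint is $k+2\ell = -n$. For $n$ odd I take $(k,\ell) = (1,-(n+1)/2)$, giving $\rho_1$ of dimension $2$ and a total representation of dimension $2m$; for $n$ even I take $(k,\ell) = (2,-(n+2)/2)$, i.e.\ $\operatorname{Sym}^2(\bbk^2) \otimes \det^{-n/2-1}$ of dimension $3$, giving a total representation of dimension $3m$. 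Faithfulness on $H$ then reduces to a kernel computation: $(A,B)$ acts trivially iff $\rho_1(A) \otimes \sigma(B) = I$, which after writing $B = \lambda I_m$ forces $A = \mu I_2$ with $\lambda = \mu^n$, using in the even case that $\operatorname{Sym}^2(A)$ is scalar iff $A$ is itself scalar. The kernel is thus exactly the $\bbG_m$ being quotiented out.

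The main subtlety is the parity split. When $n$ is odd, $(n+1)/2$ is an integer and a two-dimensional $\rho_1$ does the job; when $n$ is even, $(n+1)/2$ is not an integer, and the one-dimensional candidate $\det^{-n/2}$ has kernel $\SL_2$, so is useless for faithfulness no matter what one tensors it with. One is forced up to $\operatorname{Sym}^2$, paying one extra dimension per copy of the standard $\GL_m$-representation; this is precisely the source of the $2m$ versus $3m$ discrepancy.
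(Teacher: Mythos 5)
Your proposal is correct and follows essentially the same route as the paper: reduce to the reductive quotient $(\GL_2\times\GL_m)/\bbG_m$ using that finite subgroups avoid the unipotent radical in characteristic $0$, then twist $A\otimes B$ (resp.\ $\operatorname{Sym}^2(A)\otimes B$) by the power of $\det(A)$ needed to kill the central $\bbG_m$ — your exponents $-(n+1)/2$ and $-n/2-1$ are exactly the ones in the paper's maps $\phi_n$, and the kernel computation is the same.
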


\begin{proof}
Since $\cha(\bbk)=0$, any finite subgroup of $\Aut(X)$
maps isomorphically to the reductive quotient.
Thus, it suffices to bound the representation dimension of the
reductive quotient $\sheafAut(X)/U$,

If $n$ is odd, then let
$\phi_n : \GL_2 \times \GL_m \to \GL_{2m}$
be the group scheme homomorphism given by
\[
\phi_n(A,B) := \det(A)^{\frac{-n-1}{2}}\left(A\otimes B\right).
\]
If $n$ is even, then let
$\phi_n : \GL_2 \times \GL_m \to \GL_{3m}$
be the group scheme homomorphism given by
\[
\phi_n(A,B) := \det(A)^{-\frac{n}{2}-1}\left(\sigma(A)\otimes B\right).
\]
where $\sigma : \GL_2 \to \GL_3$ is the symmetric square.
In both cases, the kernel is of the form $(tI_2,t^nI_m)$.
Thus the result follows by the first isomorphism theorem
in view of Lemma~\ref{lem:wp_aut}.
\end{proof}

Recall that a (smooth) \emph{Fano variety} $X$ is a variety with ample
anticanonical divisor $-K_X$.  A \emph{del Pezzo surface} is a
two-dimensional Fano variety.
Given a Fano variety, we have an associated \emph{anticanonical ring}
\[
R(X,-K_X) := \bigoplus_{n \ge 0} H^0(X,-nK_X),
\]
which is a finitely generated graded ring.
A choice of minimal generating set $x_1,\ldots,x_s$ for $R(X,-K_X)$
gives rise to an embedding
$X \hookrightarrow \mathbb{P}(d_1,\ldots,d_s)$
where $d_i = \deg(x_i)$.

\begin{lem} \label{lem:anticanonical}
Let $X$ be a smooth Fano variety over a field $\bbk$ of characteristic
$0$.
Then $\fdim_\bbk(\Aut(X)) \le s$ where
$s$ is the size of a generating set of $R(X,-K_X)$.
\end{lem}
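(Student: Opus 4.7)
The plan is to exploit the fact that the anticanonical ring $R := R(X, -K_X) = \bigoplus_{n \ge 0} H^0(X, -nK_X)$ is intrinsic to $X$, so any $g \in \Aut(X)$ induces a degree-preserving $\bbk$-algebra automorphism of $R$. Since $-K_X$ is ample, $X \cong \operatorname{Proj}(R)$, and consequently the natural homomorphism $\Aut(X) \to \Aut_{\operatorname{gr}-\bbk\text{-alg}}(R)$ is injective; in particular, its restriction to any subgroup of $\Aut(X)$ is faithful.

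Now let $G \subseteq \Aut(X)$ be a finite subgroup, and let $R_+ = \bigoplus_{n \ge 1} R_n$ be the graded augmentation ideal. By graded Nakayama, the minimal number $s_{\min}$ of $\bbk$-algebra generators of $R$ equals $\dim_\bbk(R_+/R_+^2)$, and any generating set has size at least $s_{\min}$; thus $s_{\min} \le s$. For each $n \ge 1$, the image $W_n$ of the multiplication map
\[
\bigoplus_{0 < i < n} R_i \otimes R_{n-i} \;\longrightarrow\; R_n
\]
is a $G$-submodule of $R_n$, so $R_+/R_+^2$ is a $G$-representation of dimension $s_{\min}$.

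Since $\cha(\bbk) = 0$, Maschke's theorem provides a $G$-equivariant splitting, i.e.\ a $G$-invariant complement $V_n \subseteq R_n$ with $R_n = W_n \oplus V_n$. Set $V := \bigoplus_{n \ge 1} V_n$, so that the composition $V \hookrightarrow R_+ \twoheadrightarrow R_+/R_+^2$ is a $G$-equivariant isomorphism, and $\dim_\bbk V = s_{\min} \le s$. The plan is to show that $G \to \GL(V)$ is faithful: if $g \in G$ fixes $V$ pointwise, then an induction on $n$ shows that $g$ acts trivially on each $R_n$. Indeed, $W_n$ is spanned by products of elements of strictly smaller positive degrees, on which $g$ already acts trivially by the inductive hypothesis, while $g$ acts trivially on $V_n$ by assumption. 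Hence $g$ acts trivially on $R$, and by the first paragraph $g$ is the identity in $\Aut(X)$.

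The only nontrivial ingredients are the identification $X \cong \operatorname{Proj}(R(X,-K_X))$ coming from ampleness of $-K_X$ (which is where the Fano hypothesis enters) and the use of Maschke to extract a $G$-invariant grading-piece complement (which is where characteristic zero is used). I do not anticipate any real obstacle; the result follows cleanly once these two standard facts are in hand.
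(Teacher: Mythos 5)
Your proof is correct, and it shares the paper's essential starting point --- the faithful action of $\Aut(X)$ on the anticanonical ring --- but it handles the reduction to a linear representation by a genuinely different (and more self-contained) route. The paper invokes the canonical linearization of $\omega_X$ to get the graded action, presents $R(X,-K_X)$ as a quotient of the Cox ring of a weighted projective space via a choice of generators, and then appeals to the structure of the group of graded automorphisms of that polynomial ring (unipotent radical extended by $\prod_i \GL_{m_i}$), using characteristic $0$ to inject the finite group into the reductive quotient. You instead work entirely inside $R$: Maschke produces a $G$-stable complement to $R_+^2 \cap R_n$ in each graded piece, and your induction on degree shows directly that the resulting $s_{\min}$-dimensional representation is faithful. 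This avoids the (slightly glossed-over, in the paper) issue of lifting the $G$-action from $R$ to the ambient polynomial ring, so your argument is arguably cleaner at that step. The one place you compress too much is the very first sentence: the reason every $g \in \Aut(X)$ --- rather than merely a central extension of $\Aut(X)$ by $\bbG_m$ --- acts on $H^0(X,-nK_X)$ is that $\omega_X$ carries a \emph{canonical} linearization (pullback of top-degree forms), which is precisely what the paper cites; its remark following the lemma, with the example of $R(\mathbb{P}^1,\mathcal{O}(1))$, shows that for a general ample divisor this is exactly where the argument fails. Your word ``intrinsic'' is the right reason, but it deserves one explicit sentence.
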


\begin{proof}
Let $G$ be a finite group with a faithful action on $X$.
The canonical bundle $\omega_X$ has a canonical linearization of $G$
(see Proposition~2.11~of \cite{BCDP}).
This means that $G$ has a faithful action of $G$ on the Cox ring $S$
via graded ring homomorphisms.
Since $G$ is a finite group, $G$ is a subgroup of the reductive part
$\prod_{i=1}^r \GL_{m_i}(\bbk)$ of the group of graded automorphisms
from Lemma~\ref{lem:wp_aut}.
We conclude $\rdim_\bbk(G) \le s$ as desired.
\end{proof}

\begin{rem}
Lemma~\ref{lem:anticanonical} cannot be extended to all
section rings $R(X,H)$.
Indeed, a simple counterexample is the standard graded polynomial ring
$\bigoplus_{n \ge 0} H^0(X,\mathcal{O}(n))$ for $\mathbb{P}^1$.
Here, the section ring has $2$ generators, but $\fdim_\bbk(\PGL_2)$ can be $3$.
The argument fails because $\mathcal{O}(1)$ may not be linearizable,
while $\mathcal{O}(2)$ can always be linearized since it is
anticanonical.
\end{rem}

\begin{cor} \label{cor:delPezzo}
If $X$ is a del Pezzo surface of degree $d$, then
\[
\fdim_\bbk(\Aut(X)) \le
\begin{cases}
4 & \textrm{if $d=1$ or $d=2$,}\\
d+1 & \textrm{otherwise.}
\end{cases}
\]
\end{cor}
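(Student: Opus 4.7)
The plan is to invoke Lemma~\ref{lem:anticanonical} and then appeal to the classical description of the anticanonical ring $R(X,-K_X)$ of a del Pezzo surface. By that lemma, it suffices to produce, for each degree $d$, a generating set of $R(X,-K_X)$ whose size matches the claimed bound. These generating sets are standard and can be read off from the well-known anticanonical models of del Pezzo surfaces (see, e.g., the classical references on del Pezzo geometry and the treatment in \cite{Dol82Weighted} or the discussion preceding Lemma~\ref{lem:wp_rdim}).

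For $d \ge 3$, the anticanonical linear system is very ample and gives a projectively normal embedding $X \hookrightarrow \mathbb{P}^d$ as a surface of degree $d$; consequently $R(X,-K_X)$ is generated by the $d+1$ degree-one sections coming from the homogeneous coordinates, so $s = d+1$. For $d=2$, the anticanonical map realizes $X$ as a double cover of $\mathbb{P}^2$ branched over a smooth quartic, and $X$ embeds in the weighted projective space $\mathbb{P}(1,1,1,2)$ via three degree-one generators and one degree-two generator; hence $s=4$. For $d=1$, the anticanonical map is not very ample, but $|-2K_X|$ gives a double cover of a quadric cone, and the bi-anticanonical plus tri-anticanonical data embed $X$ in $\mathbb{P}(1,1,2,3)$ with two degree-one, one degree-two, and one degree-three generators; hence $s=4$. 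In each case, the claimed bound follows directly from Lemma~\ref{lem:anticanonical}.

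The only remaining step is to verify that the three generating counts above are in fact \emph{minimal}, or at least that no smaller generating set works — but Lemma~\ref{lem:anticanonical} only needs \emph{some} generating set of that size, so minimality is not required. Thus the argument is essentially a bookkeeping exercise combining Lemma~\ref{lem:anticanonical} with the classical tables of anticanonical models.

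The main (mild) obstacle is simply making sure that the invocation of Lemma~\ref{lem:anticanonical} is valid over a possibly non-closed field $\bbk$ of characteristic zero: one needs the anticanonical ring and its generators to be defined Galois-equivariantly, so that the generating set descends to $\bbk$. This is automatic since $-K_X$ is canonically defined over $\bbk$ and the dimensions of $H^0(X,-nK_X)$ agree with those over $\bar\bbk$; one can then pick any $\bbk$-basis of the relevant graded pieces up to the degrees noted above.
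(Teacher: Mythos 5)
Your proposal is correct and follows essentially the same route as the paper: invoke Lemma~\ref{lem:anticanonical}, use that $-K_X$ is very ample with $h^0(X,-K_X)=d+1$ for $d\ge 3$, and use the standard embeddings into $\mathbb{P}(1,1,1,2)$ and $\mathbb{P}(1,1,2,3)$ for $d=2$ and $d=1$ respectively. The paper's proof is exactly this bookkeeping, citing \cite[\S{6.6,7}]{DolIsk} for the low-degree anticanonical models.
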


\begin{proof}
If $d \ge 3$, then $-K_X$ is very ample, so
the anticanonical ring has
$\dim H^0(X,-K_X) = d+1$ generators in degree $1$.
If $d=2$ (resp. $d=1$), then the anticanonical ring induces embeddings
of $X$ into $\mathbb{P}(1,1,1,2)$ (resp. $\mathbb{P}(1,1,2,3)$);
see \cite[\S{6.6,7}]{DolIsk}.
The bounds now follow from
Lemma~\ref{lem:anticanonical}.
\end{proof}

\begin{rem}
The bounds in Corollary~\ref{cor:delPezzo} are certainly not sharp.
This is evident even in the case where $X=\mathbb{P}^2$.
Proposition~\ref{prop:del_pezzo} below improves on some of these bounds.
\end{rem}

The next corollary will not be used elsewhere in the paper, but
may be of independent interest.  A more detailed analysis of smooth Fano
$3$-folds will certainly yield better bounds.

\begin{cor} \label{cor:fano3}
If $X$ is a smooth complex Fano $3$-fold with $-K_X$ very ample,
then $\fdim_\bbC(\Aut(X)) \le 35$. 
\end{cor}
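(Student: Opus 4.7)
Plan of Proof. The strategy is to invoke Lemma~\ref{lem:anticanonical}: it suffices to produce a generating set of the anticanonical ring $R(X,-K_X)$ of size at most $35$.

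Because $-K_X$ is very ample, the anticanonical linear system defines a closed embedding $X \hookrightarrow \bbP^N$ with $N+1 = h^0(X,-K_X)$. Combining Riemann--Roch with Kodaira vanishing on a smooth Fano $3$-fold yields the standard identity
\[
h^0(X,-K_X) \;=\; \tfrac{1}{2}(-K_X)^3 + 3.
\]
The classification of smooth complex Fano $3$-folds (Iskovskikh in Picard rank one, Mori--Mukai in higher Picard rank) supplies the sharp upper bound $(-K_X)^3 \le 64$, with equality precisely when $X = \bbP^3$; hence $h^0(X,-K_X) \le 35$ in every case.

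To complete the argument one must show that the $N+1 \le 35$ linear coordinates actually generate $R(X,-K_X)$, i.e.\ that $X$ is projectively normal in its anticanonical embedding. This is immediate for $X = \bbP^3$ (the $4$-uple Veronese) and for $X = Q^3 \subset \bbP^4$ (the $3$-uple Veronese of a projectively normal quadric), and for every other smooth Fano $3$-fold with $-K_X$ very ample it is a classical consequence of the classification. With projective normality in hand, $R(X,-K_X)$ is the homogeneous coordinate ring of $X \subset \bbP^N$ and is generated by the coordinates, so Lemma~\ref{lem:anticanonical} delivers $\fdim_\bbC(\Aut(X)) \le N+1 \le 35$.

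The main obstacle is the twofold appeal to the Fano $3$-fold classification: one needs both the numerical bound $(-K_X)^3 \le 64$ and the projective normality of the anticanonical embedding. Neither can be derived by a clever elementary argument in this generality; any proof will need to cite (or re-derive) the Iskovskikh--Mori--Mukai tables. The constant $35$ is certainly far from sharp, since only $X = \bbP^3$ saturates the degree bound, and even there $\Aut(\bbP^3) = \PGL_4(\bbC)$ has $\fdim = 15$ by Proposition~\ref{prop:pgl4}. A refined case-by-case analysis, combining the reductive quotient of the automorphism group of the ambient weighted projective space with the bounds from the classification, would doubtless yield a substantially smaller constant.
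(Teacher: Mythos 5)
Your proof follows the paper's argument exactly: apply Lemma~\ref{lem:anticanonical}, compute $h^0(X,-K_X)=\tfrac{1}{2}(-K_X)^3+3$ by Riemann--Roch and Kodaira vanishing, and invoke Iskovskikh's bound $(-K_X)^3\le 64$. If anything you are more careful than the paper, which silently identifies the size of a generating set of $R(X,-K_X)$ with $h^0(X,-K_X)$; your explicit appeal to projective normality of the anticanonical embedding is precisely the justification that step requires.
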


\begin{proof}
Lemma~\ref{lem:anticanonical} tells us that
$\fdim_\bbC(\Aut(X)) \le \dim H^0(X,-K_X)$.
By Hirzebruch-Riemann-Roch, we know that
\[
\dim H^0(X,-K_X) = (-K_X)^3/2+3.
\]
Iskovskikh proved that $(-K_X)^3 \le 64$ for all
smooth Fano threefolds.  The result follows immediately.
\end{proof}

\section{Plane Cremona Groups}
\label{sec:cr2}

In this section, we prove the characteristic $0$ part of Theorem~\ref{thm:c2}.

In order to understand finite subgroups of the plane Cremona group,
the standard approach reduces the question to minimal rational surfaces.

\begin{thm}[Manin, Iskovskikh] \label{thm:ManIsk}
Suppose $\bbk$ is a field of characteristic $0$ and
$G$ is a finite subgroup of $\Cr_2(\bbk)$.
There exists a rational surface $X$ over $\bbk$ with a faithful action of $G$
such that either
\begin{enumerate}
\item $X$ is a del Pezzo surface with $\Pic(X)^G \cong \bbZ$, or
\item $X$ admits a $G$-conic bundle structure
$X \to \mathbb{P}^1$ with $\Pic(X)^G \cong \bbZ^2$
\end{enumerate}
\end{thm}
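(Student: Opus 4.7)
The plan is to regularize the $G$-action on some rational surface, then run the $G$-equivariant minimal model program and invoke the classification of its possible outputs.

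First I would pass from the birational $G$-action on $\bbP^2_\bbk$ to a biregular one. The standard approach: choose any smooth projective rational surface $Y$ birational to $\bbP^2_\bbk$, and form the closure of the graph of the diagonal rational map $Y \dashrightarrow Y^{|G|}$ sending $y$ to $(g \cdot y)_{g \in G}$. Taking a $G$-equivariant resolution of singularities (available in characteristic $0$ by Hironaka, applied equivariantly) yields a smooth projective rational surface $X_0$ on which $G$ acts biregularly and faithfully.

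Second, I would run the $G$-equivariant MMP on $X_0$. At each step one identifies a $K_X$-negative extremal class in the cone $\overline{\operatorname{NE}}(X)^G$ and performs the corresponding equivariant contraction. On a surface, such a contraction is either a blow-down of a $G$-orbit of pairwise disjoint $(-1)$-curves, a Mori fibration to a curve, or a Mori fibration to a point. Each blow-down drops $\operatorname{rk} \Pic(X)^G$ by one, so after finitely many steps one arrives at a $G$-minimal rational surface $X$.

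Finally, by $G$-minimality the only extremal contractions of $X$ are fibrations, and analysis of $\overline{\operatorname{NE}}(X)^G$ gives the two cases. If $\Pic(X)^G \cong \bbZ$, then the unique extremal ray is $-K_X$-positive and one verifies $-K_X$ is ample, placing $X$ in case (a) as a del Pezzo surface. If $\operatorname{rk} \Pic(X)^G = 2$, then $\overline{\operatorname{NE}}(X)^G$ has two extremal rays, both corresponding (by minimality) to Mori fibrations; since the base of a Mori fibration from a rational surface must be $\bbP^1$ with generic fibre of genus $0$, either one yields a $G$-conic bundle structure as in case (b).

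The technical heart of the argument is the $G$-equivariant MMP itself: one must verify that equivariant extremal contractions exist over possibly non-closed fields, and that divisorial contractions on a smooth $G$-surface really are blow-downs of orbits of pairwise disjoint $(-1)$-curves. Both facts are classical, having been developed in detail by Manin and Iskovskikh, so in practice I would quote their classification of minimal rational $G$-surfaces rather than reprove it from scratch.
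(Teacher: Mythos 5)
Your proposal is correct and ends up exactly where the paper does: the paper's ``proof'' is simply a citation of the Manin--Iskovskikh classification (via Dolgachev--Iskovskikh for the mixed case of a $G$-action over a non-closed field), together with the observation that rationality of $X$ forces the base of the conic bundle to be $\mathbb{P}^1$, which you also make. The regularization and equivariant-MMP scaffolding you sketch is the standard argument underlying that classification, and since you explicitly defer to Manin and Iskovskikh for the technical heart, the two proofs coincide in substance.
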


\begin{proof}
This is essentially the classification of minimal geometrically-rational
$G$-surfaces, which is due to Manin~\cite{Manin} and
Iskovskikh~\cite{Iskovskikh}.
The original papers handled \emph{either} the case of a non-closed field
or a non-trivial $G$-action. 
We refer to {\cite[Theorem~5 and Remark~1]{DolIsk2}} for the
mixed case of a $G$-action over a non-closed field.
Note that in general the base of the conic bundle
may only be a smooth genus $0$ curve, but in our case $X$ is rational
so we may assume it is $\mathbb{P}^1$. 
\end{proof}

First we consider del Pezzo surfaces.  We begin with quadric surfaces:

\begin{prop} \label{prop:P1P1}
If $X$ is a $\bbk$-form of $\bbP^1 \times \bbP^1$,
then $\rdim_\bbk(\sheafAut(X)) \le 6$.
\end{prop}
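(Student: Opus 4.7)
The plan is to reduce to the split case $X = \bbP^1_\bbk \times \bbP^1_\bbk$ and then exhibit an explicit six-dimensional faithful algebraic representation of the automorphism group scheme.

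First I would observe that the automorphism group scheme of $\bbP^1_\bbk \times \bbP^1_\bbk$ over any field is the semidirect product $(\PGL_{2,\bbk} \times \PGL_{2,\bbk}) \rtimes \bbZ/2\bbZ$, where the $\bbZ/2\bbZ$ acts by swapping the two factors. This is a linear algebraic group, so Lemma~\ref{lem:forms} applies, and it suffices to bound $\rdim_\bbk(\sheafAut(\bbP^1_\bbk \times \bbP^1_\bbk))$.

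Next I would construct the representation. Let $W$ be the three-dimensional adjoint representation of $\PGL_{2,\bbk}$; since $\PGL_{2,\bbk}$ has trivial center, $W$ is faithful. Consider $V = W \oplus W$ with the action in which $(g_1,g_2) \in \PGL_{2,\bbk} \times \PGL_{2,\bbk}$ acts by the adjoint representation on each summand separately, and the non-trivial element of $\bbZ/2\bbZ$ acts by swapping the two summands. This is visibly a morphism of algebraic groups $(\PGL_{2,\bbk} \times \PGL_{2,\bbk}) \rtimes \bbZ/2\bbZ \to \GL_{6,\bbk}$. Faithfulness on the normal subgroup follows from faithfulness of the adjoint on each factor, and any element inducing the swap exchanges the two summands (hence is non-scalar on $V$), so the whole representation is faithful. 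Equivalently, one can view $V$ as the induced representation $\Ind(W \boxtimes \mathbf{1})$.

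The main obstacle is essentially just verifying the descriptions in the first step and that the construction is algebraic (not just at the level of $\bbk$-points), both of which are routine. Combining the two steps gives $\rdim_\bbk(\sheafAut(\bbP^1_\bbk \times \bbP^1_\bbk)) \le 6$, and Lemma~\ref{lem:forms} transports this bound to every $\bbk$-form of $\bbP^1 \times \bbP^1$.
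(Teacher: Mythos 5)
Your proposal is correct and follows essentially the same route as the paper: reduce to the split case via Lemma~\ref{lem:forms}, identify $\sheafAut(\bbP^1\times\bbP^1)$ with $\PGL_{2,\bbk}^2\rtimes C_2$, and embed it into $\GL_{6,\bbk}$ by placing a faithful $3$-dimensional representation of $\PGL_{2,\bbk}$ (the adjoint) block-diagonally on each factor with $C_2$ swapping the blocks. The only difference is that you spell out the faithfulness check and the induced-representation interpretation, which the paper leaves implicit.
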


\begin{proof}
In view of Lemma~\ref{lem:forms}, it
suffices to show that $\rdim_\bbk(\sheafAut(X)) \le 6$
for $X = \bbP^1 \times \bbP^1$.
Recall that we have an isomorphism of group schemes
\[
\sheafAut(\bbP^1 \times \bbP^1) \cong \PGL_{2,\bbk}^2 \rtimes C_2
\]
where $C_2$ interchanges the two copies of $\PGL_{2,\bbk}$.
There is an injective homomorphism of algebraic groups
$\PGL_{2,\bbk} \to \GL_{3,\bbk}$.
Thus we have an embedding
$\sheafAut(X) \hookrightarrow \GL_{6,\bbk}$
where each factor of $\PGL_{2,\bbk}$ is embedded block diagonally
and $C_2$ acts by interchanging the factors.
\end{proof}

\begin{prop} \label{prop:del_pezzo}
If $X$ is a rational del Pezzo surface not isomorphic to $\bbP^2$,
then $\fdim_\bbk(\Aut(X)) \le 6$.
\end{prop}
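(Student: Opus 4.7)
The plan is to split into cases according to the degree $d = K_X^2 \in \{1,\dots,9\}$ of the del Pezzo surface. Since $X$ is $\bbk$-rational and the only $\bbk$-rational form of $\bbP^2$ is $\bbP^2$ itself (nontrivial Severi--Brauer surfaces are not rational), the case $d = 9$ is excluded by hypothesis. For $d \le 5$, Corollary~\ref{cor:delPezzo} already gives $\fdim_\bbk(\Aut(X)) \le \max(4, d+1) \le 6$, so only $d \in \{6,7,8\}$ require genuinely new arguments.

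For $d = 8$, I would split further. If $X$ is a $\bbk$-form of $\bbP^1 \times \bbP^1$, Proposition~\ref{prop:P1P1} closes the case. Otherwise $X \cong \mathbb{F}_1$ (which admits only one $\bbk$-form, since its single $(-1)$-curve is Galois-invariant and descends), and contracting the exceptional curve identifies $\Aut(X)$ with the stabilizer of a $\bbk$-point in $\PGL_3(\bbk)$, namely an affine group of shape $\GL_2(\bbk) \ltimes \bbk^2$. In characteristic $0$ the unipotent part is torsion-free, so every finite subgroup injects into $\GL_2(\bbk)$ and $\fdim_\bbk(\Aut(X)) \le 2$. For $d = 7$, I would exploit that $X_{\bar{\bbk}}$ has exactly three $(-1)$-curves, and the one meeting the other two is intersection-theoretically distinguished, hence Galois-invariant and defined over $\bbk$. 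Contracting this curve yields a $\bbk$-form $Y$ of $\bbP^1 \times \bbP^1$ together with a marked $\bbk$-point, and every automorphism of $X$ descends to one of $Y$ fixing this point. Proposition~\ref{prop:P1P1} then caps $\fdim_\bbk(\Aut(X)) \le \rdim_\bbk(\sheafAut(Y)) \le 6$.

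The main obstacle is $d = 6$, where Corollary~\ref{cor:delPezzo} only delivers the bound $7$. The plan is to exploit that the split del Pezzo sextic is the smooth toric variety whose fan has six rays at the vertices of a regular hexagon in $\bbZ^2$; a direct check shows there are no Demazure roots, so $\sheafAut(X) \cong T \rtimes W$ with $T = \bbG_{m}^2$ and $W$ the dihedral group of order $12$. Letting $T$ act through the six vertices of the dual hexagon in the character lattice---which form a single $W$-orbit and generate the lattice---and letting $W$ permute the corresponding basis, one obtains a faithful $6$-dimensional representation of $T \rtimes W$. Thus $\rdim_\bbk(\sheafAut(X)) \le 6$ for the split form, and Lemma~\ref{lem:forms} transfers the bound to every $\bbk$-form $Y$, giving $\fdim_\bbk(\Aut(Y)) \le 6$ in the remaining case. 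The delicate point here is verifying that the automorphism group scheme of the toric dP6 is purely reductive and that the six-character orbit is simultaneously $W$-stable and lattice-generating.
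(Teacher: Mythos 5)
Your proposal is correct and shares the paper's overall skeleton (case division by degree $d=K_X^2$, Corollary~\ref{cor:delPezzo} for $d\le 5$, Proposition~\ref{prop:P1P1} for forms of $\bbP^1\times\bbP^1$), but it diverges in two places, and in one of them your extra work is genuinely needed. For $d=7$ and $d=8$ the paper contracts a canonically defined exceptional locus to land in $\PGL_3(\bbk)$ fixing a point, so that lifts of finite subgroups give reducible degree-$3$ representations and the tensor trick of Proposition~\ref{prop:pgl3_no_root} applies; your route for $d=7$ --- contracting the distinguished third $(-1)$-curve onto a pointed form of $\bbP^1\times\bbP^1$ and invoking Proposition~\ref{prop:P1P1} --- is equally valid, just with a weaker (but sufficient) bound of $6$ instead of $2$ or $3$. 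The substantive difference is at $d=6$: the paper disposes of all $d\le 6$ by citing Corollary~\ref{cor:delPezzo}, but for $d=6$ that corollary only yields $\dim H^0(X,-K_X)=d+1=7$, so the paper's proof as written falls one short of the claimed bound in exactly this case. Your toric argument --- the hexagonal fan has no Demazure roots, so $\sheafAut(X)\cong\bbG_m^2\rtimes D_{12}$ by the Demazure/Cox structure theorem already cited in Lemma~\ref{lem:wp_aut}, and the six hexagon vertices in the character lattice form a single $W$-orbit generating the lattice, giving a faithful $6$-dimensional monomial representation that transfers to every form by Lemma~\ref{lem:forms} --- closes that gap, and is consistent with the $\mathsf{G}_2$ construction in Theorem~\ref{thm:Bailey} showing that $6$ is sharp for this surface. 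The only points you should spell out are the splitting of $\sheafAut(X)$ as torus times fan symmetries (which holds because lattice automorphisms preserving the fan extend to automorphisms of $X$ normalizing $T$) and the routine verification that a monomial representation of $T\rtimes W$ built on a $W$-stable, lattice-generating set of characters is faithful; neither is an obstacle.
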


\begin{proof}
Recall that when $\bbk$ is algebraically closed,
then a del Pezzo surface $X$ is either $\bbP^1 \times \bbP^1$
or is $\bbP^2$ blown up at $9-d$ points where $d = K_X^2$
is the degree of the del Pezzo surface.
The case where $X$ is a $k$-form of $\bbP^1 \times \bbP^1$
was considered in Proposition~\ref{prop:P1P1}.
Thus, we may assume that $X$ is a form of $\bbP^2$ blown up at $9-d$
points.

When $d=9$, the forms of $\bbP^2$ are Severi-Brauer surfaces;
these are never rational.
When $d = 8$ or $d=7$, there is always a canonical exceptional curve
defined over $\bbk$ which can be $\Aut(X)$-equivariantly blown down;
thus these cases give embeddings of $\sheafAut(X) \to \PGL_3(\bbk)$,
which fix a $\bbk$-point.  Thus $\Aut(X)$ corresponds to the image of a
reducible representation of degree $3$, which bounds
$\fdim_\bbk(\Aut(X))$ as desired.
The remaining cases are of degree $d \le 6$,
which follow from Corollary~\ref{cor:delPezzo}.
\end{proof}

Suppose $\bbk$ is algebraically closed of characteristic $0$.
A \emph{conic bundle surface} is a morphism $\phi: X \to C$ where $X$ is a smooth
projective surface, $C$ is a smooth curve, $\phi$ has at most finitely
many singular fibers, the smooth fibers are
isomorphic to $\mathbb{P}^1$, and the singular fibers are a
union of two smooth rational $(-1)$-curves meeting at a single point.
If $\phi$ has no singular fibers, then $X$ is a \emph{ruled surface}.
We say $\phi$ is an \emph{exceptional conic bundle} if there exists an
integer $n$ such that $\phi$ has exactly $2n$ singular fibers and two
sections of self-intersection $-n$ (see Definition~2.1~of~\cite{Fong}).

In the case where $\bbk$ is not necessarily closed,
a \emph{conic bundle surface} is a morphism $\phi: X \to C$ such that
$\overline{\phi} : \overline{X} \to \overline{C}$
is a conic bundle over the algebraic closure.
We are interested exclusively in \emph{rational} conic bundle surfaces
so we may assume that $C \cong \mathbb{P}^1$.
In this case, the rational ruled surfaces $X$ are precisely the
Hirzebruch surfaces $\mathbf{F}_n$ for $n \ge 0$.

\begin{prop} \label{prop:Hirzebruch}
If $\bbk$ has characteristic $0$, $n \ge 2$ is an integer,
and $X$ is a Hirzebruch surface $\mathbf{F}_n$,
then $\fdim_\bbk(\Aut(X)) \le 3$. 
\end{prop}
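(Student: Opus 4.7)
The plan is to reduce the problem to a weighted projective space and then apply Lemma~\ref{lem:wp_rdim}. For $n \ge 2$, the Hirzebruch surface $\mathbf{F}_n$ contains, over $\bar{\bbk}$, a unique irreducible curve $E$ of negative self-intersection---namely the section with $E^2 = -n$ coming from the $\mathcal{O}_{\bbP^1}(-n)$ summand. Because $E$ is geometrically unique, it is $\mathrm{Gal}(\bar{\bbk}/\bbk)$-stable and descends to a curve defined over $\bbk$; by the same token, $E$ is set-theoretically fixed by every element of $\Aut(\mathbf{F}_n)$.

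Next, I would invoke the standard identification of $\mathbf{F}_n$ with the minimal resolution of the weighted projective plane $Y := \bbP(1,1,n) = \bbP(1^2 : n^1)$: the variety $Y$ has a unique singular point, a cyclic quotient singularity of type $\tfrac{1}{n}(1,1)$, whose exceptional divisor in the minimal resolution is precisely the negative section $E$. Contracting $E$ thus produces a morphism $\pi \colon \mathbf{F}_n \to Y$ defined over $\bbk$ which is an isomorphism off $E$. Since every automorphism of $\mathbf{F}_n$ preserves $E$, it descends uniquely along $\pi$ to an automorphism of $Y$, and because $\pi$ is birational this descent is injective. We obtain a group embedding $\Aut(\mathbf{F}_n) \hookrightarrow \Aut(Y)$.

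Finally, Lemma~\ref{lem:wp_rdim} applied with $m = 1$ gives $\fdim_\bbk(\Aut(Y)) \le 2$ if $n$ is odd and $\fdim_\bbk(\Aut(Y)) \le 3$ if $n$ is even; either way $\fdim_\bbk(\Aut(\mathbf{F}_n)) \le 3$.

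The essential content is already absorbed by Lemma~\ref{lem:wp_rdim}, and the reduction itself rests on the classical fact that $\mathbf{F}_n$ is the minimal resolution of the cone singularity on $\bbP(1,1,n)$. The only point that requires care is verifying that the contraction descends to $\bbk$ and is $\Aut$-equivariant; both follow immediately from the geometric uniqueness of $E$, so there is no substantive obstacle.
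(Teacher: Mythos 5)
Your proposal is correct and follows essentially the same route as the paper: contract the unique negative section to obtain a morphism $\mathbf{F}_n \to \bbP(1,1,n)$ defined over $\bbk$, deduce an embedding of automorphism groups, and conclude via Lemma~\ref{lem:wp_rdim} with $m=1$. The extra care you take about Galois descent and $\Aut$-equivariance of the contraction is sound and merely makes explicit what the paper leaves implicit.
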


\begin{proof}
Since $n \ge 1$, there is a canonical section $E_0$ of $\phi$
with self-intersection $-n$.
Blowing down $E_0$ gives a map from $\mathbf{F}_n$ to
the weighted projective space $\bbP(1,1,n)$.
The result now follows by Lemma~\ref{lem:wp_rdim}.
\end{proof}

The geometric Picard group $\Pic(\overline{X})$ is generated by a
section $E$ with self-intersection $-n$, a general fiber $F$,
and a choice of irreducible component $R_i$ in each of the $k$ singular
fibers of
$\phi$.  Since $E$ intersects each fiber only once, we may choose the
components $R_1,\ldots,R_k$ such that that $E \cap R_i = \emptyset$.
In this basis, the intersection theory is determined by the following:
\begin{align*}
E^2 &= -n & F^2&=0 & E\cdot F&=1\\
E \cdot R_i &= 0 & F \cdot R_i &= 0 & 
R_i \cdot R_j &= - \delta_{ij}
\end{align*}
and the canonical bundle is given by
\[
K_X = -2E + (-2+n)F + \sum_{i=1}^k R_i .
\]

Let $\sheafAut_C(X)$ be the subgroup scheme of $\sheafAut(X)$
that leaves invariant the morphism $\phi : X \to C$.
There is a natural map $\rho: \Aut(\bar{X}) \to \Aut(\Pic(\bar{X}))$
induced by $g \mapsto (g^\ast)^{-1}$.
The subgroup $\rho(\Aut_C(X))$ leaves invariant $K_X$ and $F$
in $\Pic(X)$.
Thus, the action on $\Pic(\bar{X})$ is determined by the images of the $R_i$.
The irreducible components of the singular fibers
\[
R_1,\ldots,R_k,F-R_1,\ldots,F-R_k
\] must be permuted amongst themselves
and the singular fibers $R_i \cup (F-R_i)$ must also be permuted.
We conclude that
\begin{equation} \label{eq:conicPic}
\rho(\Aut_C(X)) \subseteq C_2^k \rtimes S_k
\end{equation}
where $S_k$ corresponds to permutations of the singular fibers
and $C_2^k$ corresponds to interchanging components within singular
fibers.

\begin{prop} \label{prop:non_exceptional_conic}
If $\bbk$ has characteristic $0$, the map $\phi : X \to \mathbb{P}^1$ is a conic
bundle, and $\rho$ is injective,
then $\fdim_\bbk(\Aut_{\mathbb{P}^1}(X)) \le 6$.
\end{prop}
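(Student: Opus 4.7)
Fix a finite subgroup $G \subseteq \Aut_{\mathbb{P}^1}(X)$; I want $\rdim_\bbk(G) \le 6$. The strategy is to combine the embedding \eqref{eq:conicPic} with the natural projection to the base, squeezing $G$ into a tractable semidirect product, and then invoke the group-theoretic result of Appendix~\ref{sec:tedious_group_theory}.

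First I would use the hypothesis that $\rho$ is injective to embed $G$, via \eqref{eq:conicPic}, into the signed permutation group $C_2^k \rtimes S_k$. (Note that injectivity of $\rho$ also forces the identity component of $\Aut_{\mathbb{P}^1}(X)$ to be trivial, since it lies in $\ker(\rho)$; so there is no loss in restricting to finite subgroups.)

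Second, I would bring in the morphism $\pi : \Aut_{\mathbb{P}^1}(X) \to \Aut(\mathbb{P}^1) = \PGL_2(\bbk)$ induced by $\phi$. Put $H := \pi(G)$ and $K := G \cap \ker(\pi)$. The projection $C_2^k \rtimes S_k \to S_k$ and $\pi$ are compatible: both record the permutation of the $k$ base points below the singular fibers of $\phi$. Hence $K \subseteq C_2^k$, and the image of $G$ in $S_k$ coincides with the permutation action of $H$ on those $k$ points. Thus $G$ is a subgroup of $C_2^k \rtimes H$, where $H$ is one of the groups on Klein's list — cyclic, dihedral, $A_4$, $S_4$, or $A_5$ — and $K$ is an $H$-stable subgroup of the permutation module $C_2^k$.

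Third, I would feed this group-theoretic datum into Appendix~\ref{sec:tedious_group_theory}. The non-exceptional setting — encoded by $\rho$ injective, which rules out configurations such as $K$ containing the ``swap every singular fiber at once'' element that would produce two disjoint sections of equal self-intersection — restricts which $H$-submodules $K \subseteq C_2^k$ can actually occur, and the appendix result provides the uniform bound $\rdim_\bbk(G) \le 6$ across all admissible pairs $(H, K)$.

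The main obstacle is that final step: bounding the representation dimension of $C_2^k \rtimes H$-subgroups uniformly in $k$ requires carefully tracking the orbits of $H \subseteq \PGL_2(\bbk)$ on the set of $k$ points, identifying the characters of $K$ forced to appear simultaneously in any faithful representation of $G$, and verifying that in every case the induced representation machinery for semidirect products with abelian normal subgroup collapses to dimension at most $6$. This case analysis — across cyclic, dihedral, and polyhedral $H$, and across all orbit patterns compatible with non-exceptionality — is precisely what the appendix is designed to carry out.
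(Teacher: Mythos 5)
Your setup matches the paper's: injectivity of $\rho$ gives the embedding $G \hookrightarrow C_2^k \rtimes S_k$ via \eqref{eq:conicPic}, and the projection to the base identifies the kernel $K = G \cap C_2^k$ and the image $H \subseteq \PGL_2(\bbk)$, yielding an extension $1 \to K \to G \to H \to 1$. But there is a genuine gap at the crucial step: you never bound $K$. Appendix~\ref{sec:tedious_group_theory} (Theorem~\ref{thm:dihedral_ext}) is stated only for $N \cong C_2$ or $N \cong C_2^2$; it does not carry out a ``uniform in $k$'' analysis of arbitrary $H$-stable subgroups $K \subseteq C_2^k$, and no such analysis could give the bound $6$, since $\rdim_\bbk(C_2^m) = m$ grows without bound. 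The missing idea is geometric, not combinatorial: $K$ acts trivially on the base, hence acts faithfully on the (geometric) generic fiber of $\phi$, which is $\mathbb{P}^1$ over the function field of the base. An elementary abelian $2$-subgroup of $\PGL_2$ of a field of characteristic $0$ has rank at most $2$, so $K \cong 1$, $C_2$, or $C_2^2$. Only after this reduction does the appendix apply.

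Relatedly, your attempt to substitute ``non-exceptionality restricts which $H$-submodules $K \subseteq C_2^k$ can occur'' for this fiber argument does not work. The injectivity of $\rho$ is what gives you the embedding into $C_2^k \rtimes S_k$ in the first place (its failure is what characterizes exceptional conic bundles, handled separately in Proposition~\ref{prop:exceptional_conic}); it does not further constrain $K$ inside $C_2^k$. The constraint on $K$ comes entirely from faithfulness on the fiber. With that one sentence added, your argument becomes the paper's proof.
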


\begin{proof}
Let $G$ be a subgroup of $\Aut_{\mathbb{P}^1}(X)$.
As in Theorem~5.7~of~\cite{DolIsk}, from \eqref{eq:conicPic}
we obtain an exact sequence
\[
1 \to N \to G \to P \to 1
\]
where $P$ is the image of $G$ in $S_k$
and $N$ is the restriction to $C_2^k$.
The map $\phi : X \to \mathbb{P}^1$ induces a map $G \to \PGL_2(\bbk)$,
for which $N$ is the kernel and $P$ is the image.
Thus, $P$ is a subgroup of $\PGL_2(\bbk)$.
Since $N$ acts faithfully on the geometric fiber of $\phi$,
either $N \cong \bbZ/2$ or $N \cong (\bbZ/2\bbZ)^2$.
At this point, it is a tedious but mostly elementary exercise
in group and representation theory.
We relegate it to Appendix~\ref{sec:tedious_group_theory}.
\end{proof}

If $\rho$ is \emph{not} injective and $\phi$ is not a ruled surface,
then $X$ is an exceptional conic bundle
by Lemma~2.16~of~\cite{Fong} or Proposition~5.5~of~\cite{DolIsk}.
(Both these references work over algebraically closed fields,
but being an exceptional conic bundle is a geometric property.)

\begin{prop} \label{prop:exceptional_conic}
If $\bbk$ has characteristic $0$ and $X \to \mathbb{P}^1$
is a rational exceptional conic bundle,
then $\fdim_\bbk(\Aut(X)) \le 6$.
\end{prop}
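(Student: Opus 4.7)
My plan is to invoke prior results directly for the smallest case and to reduce the larger cases to a weighted projective space embedding. For $n=1$, the exceptional conic bundle $X$ (with two singular fibers and two $(-1)$-sections) is a geometrically rational del Pezzo surface of degree $6$ not isomorphic to $\mathbb{P}^2$, so Proposition~\ref{prop:del_pezzo} applies directly to give $\fdim_\bbk(\Aut(X))\leq 6$.

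For $n\geq 2$, the unordered pair $\{E,E'\}$ of $(-n)$-sections is uniquely characterized by $X$ and therefore both $\Gal(\bar\bbk/\bbk)$-stable and $\Aut(X)$-stable. I would contract this pair over $\bbk$ to obtain a birational morphism $X\to \bar X$ with $\Aut(X)\cong\Aut(\bar X)$; the surface $\bar X$ then acquires (one pair of possibly Galois-conjugate) cyclic quotient singularities of type $\tfrac{1}{n}(1,1)$ at the images of $E$ and $E'$.

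Next, the standard structure theory of exceptional conic bundles (cf.\ \cite{Fong}, \cite{DolIsk}) identifies $\bar X_{\bar\bbk}$ with the hypersurface $\{xy=f(u,v)\}\subset\mathbb{P}(1,1,n,n)_{\bar\bbk}$ for a binary form $f$ of degree $2n$ with $2n$ distinct roots, where the conic bundle structure is $(u{:}v{:}x{:}y)\mapsto(u{:}v)$ and the singular points are $[0{:}0{:}1{:}0]$ and $[0{:}0{:}0{:}1]$. Since this embedding is canonical up to the action of $\Aut(\mathbb{P}(1,1,n,n))$, by descent it yields an embedding of $\bar X$ as a hypersurface in some $\bbk$-form $Y$ of $\mathbb{P}(1,1,n,n)$, and so every finite subgroup $G\subseteq \Aut(\bar X)$ sits inside $\Aut(Y)$. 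Lemma~\ref{lem:wp_rdim} (with $m=2$) bounds $\fdim_\bbk(\Aut(\mathbb{P}(1,1,n,n)))$ by $3m=6$ (with the sharper bound $2m=4$ when $n$ is odd), and Lemma~\ref{lem:forms} transfers this bound to the twist $Y$, yielding $\rdim_\bbk(G)\leq 6$ as required.

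The main technical obstacle is the explicit identification of $\bar X$ as the hypersurface $\{xy=f(u,v)\}\subset\mathbb{P}(1,1,n,n)$ and the descent of this embedding to $\bbk$. I would carry this out via the Cox ring of $\bar X$: the weight-$1$ generators $u,v$ arise from sections of the fiber class of the conic bundle $\bar X\to\mathbb{P}^1$, while the weight-$n$ generators $x,y$ arise from sections of the line bundles associated to the divisor classes of $\bar E$ and $\bar E'$, with the relation $xy=f(u,v)$ encoding the positions of the $2n$ singular fibers over $\mathbb{P}^1$. The $\Aut(\bar X)$-equivariance of this presentation, combined with Galois descent of the Cox ring, then supplies the needed embedding into a $\bbk$-form of the ambient weighted projective space.
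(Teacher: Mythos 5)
Your argument is correct and follows essentially the same route as the paper: contract the distinguished pair of $(-n)$-sections and realize the resulting surface inside $\bbP(1,1,n,n)$ via the ``Third Construction'' of Dolgachev--Iskovskikh, then conclude with Lemma~\ref{lem:wp_rdim}. The only differences are minor: you treat $n=1$ separately as a degree-$6$ del Pezzo surface and retain a possibly nontrivial $\bbk$-form of the ambient weighted projective space, transferring the bound by a twisting argument as in Lemma~\ref{lem:forms}, whereas the paper treats all $n$ uniformly and disposes of the form by observing that rationality of $X$ forces $P\cong\bbP(1,1,n,n)$.
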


\begin{proof}
By definition, over the algebraic closure,
there exist exactly $2n$ singular fibers
and exactly two sections $E_0$ and $E_\infty$
with self-intersection $-n$.
From the ``Third Construction'' in \S{5}~of~\cite{DolIsk},
the two sections are disjoint and blowing them down
gives a morphism $\overline{X} \to \bbP(1,1,n,n)$.
Since the pair of sections are uniquely defined, their union is defined
over $\bbk$.  Thus we have a canonical morphism $X \to P$
where $P$ is a $\bbk$-form of $\bbP(1,1,n,n)$.
Since $X$ is rational, $P \cong \bbP(1,1,n,n)$.
The result now follows from Lemma~\ref{lem:wp_rdim}.
\end{proof}

We now have a complete description of $c_2(\bbk)$ for all fields:

\begin{proof}[Proof of Theorem~\ref{thm:c2}]
The lower bound of $6$ for all fields of characteristic $0$
follows from Theorem~\ref{thm:Bailey}.
Now we consider upper bounds.
By Theorem~\ref{thm:ManIsk}, it suffices to bound $\fdim_\bbk(\Aut(X))$
for a rational del Pezzo surface $X$ or a conic bundle surface $X$.

Suppose $X$ is a rational del Pezzo surface.
If $X$ is not isomorphic to $\mathbb{P}^2$, then
$\fdim_\bbk(\Aut(X)) \le 6$ by Proposition~\ref{prop:del_pezzo}.
The remaining case is $X \cong \mathbb{P}^2$, which is the only case
where the base field is relevant.
If $\bbk$ contains a root of unity, then
$\fdim_\bbk(\PGL_3(\bbk))=8$ by Proposition~\ref{prop:pglp}.
Otherwise, $\fdim_\bbk(\PGL_3(\bbk)) \le 6$ by
Proposition~\ref{prop:pgl3_no_root}.

Now suppose $X$ has a conic bundle structure.
If $X$ has no singular fibers, then it is a rational ruled surface:
a Hirzebruch surface $\mathbf{F}_n$.
In the cases where $n \le 1$, the surface $X$ is also a del Pezzo surface
and so was handled above.
Otherwise, we appeal to Proposition~\ref{prop:Hirzebruch}.
The remaining cases are handled by
Proposition~\ref{prop:exceptional_conic}, and
Proposition~\ref{prop:non_exceptional_conic}.
\end{proof}

\section{Higher Cremona Groups}
\label{sec:cr_higher}

Here we prove Theorem~\ref{thm:finiteness}.

\begin{proof}
First, we consider the case where $\bbk$ has all roots of unity.
We claim it suffices to assume $\bbk=\bbC$.
Indeed, if $G$ is a finite group in $\GL_n(\bbC)$, then there exists a
basis such that the entries of the matrices defining the representation
lie in the field generated by the traces of these matrices
\cite[Proposition 33]{SerreLinear}.
These traces are all sums of roots of unity since $G$ is finite.

In \cite[Theorem~4.2]{PSJordan} it is shown that there exists a constant
$K(n)$ such that every finite group $G$ acting biregularly on a rational
smooth projective variety of dimension $n$ over the field $\bbC$
contains a subgroup $H$ of index at most $K(n)$ that fixes a point $p\in
X$. It is well known that the representation of $H$ on the tangent space
$T_p X$ is faithful (see for example \cite[Lemma~4]{Popov}). In
particular, $H$ is isomorphic to a subgroup of $\GL_n(\bbC)$ and hence
$G$ can be embedded into $\GL_N(\bbC)$, where $N=K(n)n$, and we obtain
$c_n(\bbC)\leq N$. 

Next, we consider the case where $\bbk$ is a finitely generated extension
of $\bbQ$.
This is a consequence of the boundedness of finite subgroups of
$\Cr_n(\bbk)$ proven in \cite[Corollary 1.5]{PSJordan} and \cite{Birkar}.
\end{proof}

\begin{rem}
It is likely that Theorem~\ref{thm:finiteness} applies
for all fields of characteristic $0$.
However, this would likely require careful extension of the arguments
from \cite{PSJordan} and is beyond the scope of this paper.
\end{rem}

\begin{thm} \label{thm:c3_bounded}
Suppose $\bbk$ is a field of characteristic $0$ containing all roots of
unity.
Then $15 \le c_3(\bbk) \le 62208$.
\end{thm}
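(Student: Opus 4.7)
The plan is to prove the two inequalities separately, using machinery already established earlier in the paper. For the lower bound $c_3(\bbk) \ge 15$, I would use that every automorphism of $\mathbb{P}^3_\bbk$ is in particular a birational self-map, giving an embedding $\PGL_4(\bbk) \hookrightarrow \Cr_3(\bbk)$. Since $\bbk$ contains all roots of unity it contains a primitive $4$th root, so Proposition~\ref{prop:pgl4} yields $\fdim_\bbk(\PGL_4(\bbk)) = 15$, exhibiting a finite subgroup of $\Cr_3(\bbk)$ with representation dimension $15$.

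For the upper bound $c_3(\bbk) \le 62208$, I would first reduce to the case $\bbk = \bbC$. Given a finite $G \subseteq \Cr_3(\bbk)$, the birational maps defining a (finite) generating set of $G$ have coefficients in a finitely generated, hence countable, subfield $\bbk_0 \subseteq \bbk$, and any embedding $\bbk_0 \hookrightarrow \bbC$ induces $G \hookrightarrow \Cr_3(\bbC)$. Conversely, a faithful complex representation $G \hookrightarrow \GL_N(\bbC)$ descends to the cyclotomic subfield generated by its character values by Proposition~33 of \cite{SerreLinear}, which embeds into $\bbk$ by hypothesis. Hence $c_3(\bbk) = c_3(\bbC)$. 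Next, following the proof of Theorem~\ref{thm:finiteness}, I would invoke \cite[Theorem~4.2]{PSJordan} for $n=3$ to get a constant $K(3)$ such that every finite $G \subseteq \Cr_3(\bbC)$ contains a subgroup $H$ of index at most $K(3)$ acting on a smooth rational projective threefold with a fixed point. The action on the $3$-dimensional tangent space is faithful, so $H \hookrightarrow \GL_3(\bbC)$; inducing this representation to $G$ produces a faithful $G$-representation of dimension $[G:H] \cdot 3 \le 3K(3)$, so it suffices to establish $K(3) \le 20736$.

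The main obstacle is therefore to extract this explicit value of $K(3)$ from the proof of \cite[Theorem~4.2]{PSJordan}. Their argument runs a $G$-equivariant version of the minimal model program on a rational threefold, reducing to an action on a Mori fibre space (either a Fano threefold with terminal singularities, a del Pezzo fibration over $\mathbb{P}^1$, or a conic bundle over a rational surface), and then applies a pigeonhole/Jordan-type bound at each reduction to force a bounded-index subgroup to fix a point. Each such reduction contributes a specific numerical factor depending on the Jordan constants for the automorphism groups of the building blocks in dimension $3$. Carefully tracking and optimizing these factors, and using the sharpest available Jordan bounds for each case, should yield the multiplicative bound $K(3) \le 20736$ and consequently $c_3(\bbC) \le 3 \cdot 20736 = 62208$.
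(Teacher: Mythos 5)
Your lower bound argument is exactly the paper's: $\PGL_4(\bbk) \subseteq \Cr_3(\bbk)$ and Proposition~\ref{prop:pgl4} give a finite subgroup of representation dimension $15$. That half is fine.

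The upper bound, however, has a genuine gap. Your strategy rests entirely on extracting an explicit value $K(3) \le 20736$ from the fixed-point theorem \cite[Theorem~4.2]{PSJordan}, but that theorem is purely an existence statement: its proof (via equivariant MMP and boundedness results) does not produce explicit constants, and no explicit value of $K(3)$ is available in the literature. Your final paragraph acknowledges this and defers the entire difficulty to ``carefully tracking and optimizing these factors,'' which is not a proof; the target value $20736$ appears to be reverse-engineered from $62208/3$ rather than derived. The paper sidesteps the fixed-point route entirely and uses a different decomposition: by \cite{PSJordan3}, every finite $G \subseteq \Cr_3(\bbk)$ has an abelian subgroup $A$ of index at most $10368$ (this Jordan constant for rank $3$ \emph{is} explicit in that reference), and by \cite{Prok2Group} every abelian subgroup of $\Cr_3(\bbk)$ has rank at most $6$. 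Since $\bbk$ contains all roots of unity, $A$ embeds in $(\bbk^\times)^6 \subseteq \GL_6(\bbk)$, and inducing up gives a faithful representation of $G$ of dimension at most $6 \times 10368 = 62208$. Note also that the paper's argument works directly over $\bbk$, so your reduction to $\bbC$ (which is itself sound, mirroring the proof of Theorem~\ref{thm:finiteness}) is not needed. If you want to salvage your approach, you would need to replace the appeal to $K(3)$ with a citation that actually carries explicit constants; the abelian-subgroup route is the one for which such constants exist.
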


\begin{proof}
Since $i \in \bbk$, the lower bound follows from
Proposition~\ref{prop:pgl4}.

From \cite{PSJordan3}, any finite subgroup $G$ of $\Cr_3(\bbk)$
has an abelian subgroup $A$ of index $\le 10368$.
From \cite{Prok2Group}, the maximal rank of an abelian subgroup $A$
of $\Cr_3(\bbk)$ is $6$.
Since $\bbk$ contains all roots of unity, there exists
a faithful representation of $A$ of dimension $\le 6$.
The induced representation gives a faithful representation of $G$
of dimension $\le 6 \times 10368$.
\end{proof}

\begin{rem}
A more careful application of the ideas in \cite{PSJordan3} will most
likely show that that the upper bound in Theorem~\ref{thm:c3_bounded}
is too high.  Indeed, the bound of Corollary~\ref{cor:fano3}
for smooth Fano varieties is much smaller than this.
However, as with surfaces in this paper, the conic bundle case seems
challenging since representation dimensions of group extensions are
difficult to control.
\end{rem}

\appendix
\section{Extensions of polyhedral groups}
\label{sec:tedious_group_theory}

The purpose of this section is to prove the following statement:

\begin{thm} \label{thm:dihedral_ext}
Let $\bbk$ be a field of characteristic $0$.
Suppose $G$ sits inside an exact sequence
\[
1 \to N \to G \to P \to 1
\]
where $N \cong C_2$ or $N \cong C_2^2$,
and $P$ is a finite subgroup of $\PGL_2(\bbk)$.
Then $\rdim_{\bbk}(G) \le 6$.
\end{thm}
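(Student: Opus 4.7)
The plan is to construct, for every such $G$, a faithful $\bbk$-representation of dimension at most $6$ presented as a direct sum $\sigma \oplus \tau$. Here $\sigma$ is the pullback to $G$ of a faithful representation of $P$, which has dimension at most $3$ by Theorem~\ref{thm:c1}; and $\tau$ is a $\bbk$-representation of $G$ of dimension at most $3$ whose restriction to $N$ is faithful. Since $\ker\sigma \supseteq N$ while $\ker\tau \cap N = 1$, the kernel of $\sigma \oplus \tau$ is trivial, so this gives a faithful representation of $G$ of dimension at most $6$.

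Construction of $\tau$ proceeds by case analysis. Over a field of characteristic $0$, a finite subgroup $P \subseteq \PGL_2(\bbk)$ is cyclic $C_n$, dihedral $D_n$, or isomorphic to $A_4$, $S_4$, or $A_5$. One further splits on $N \cong C_2$ (which is automatically central in $G$) versus $N \cong C_2^2$ (central in $G$ precisely when the induced map $P \to \Aut(N) = S_3$ is trivial). When $N$ is central I produce $\tau$ as an irreducible representation of the appropriate ``spin'' or ``binary'' cover of $P$ on which $N$ acts faithfully: the cyclic case gives an abelian $G$ which is handled by a direct character construction in dimension at most $2$; the dihedral case uses the dicyclic lift, whose standard $2$-dimensional faithful representation has generators expressible in $\GL_2(\bbk)$ by rescaling the generators of $D_n \subseteq \PGL_2(\bbk)$ so they land in $\SL_2(\bbk)$ up to sign; the polyhedral cases use the binary tetrahedral, octahedral, and icosahedral groups, each admitting a $2$-dimensional faithful representation over $\bbk$ whenever the underlying polyhedral group is realized in $\PGL_2(\bbk)$.

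When $P$ acts nontrivially on $N \cong C_2^2$, I apply Clifford theory. Pick a nontrivial character $\chi : N \to \{\pm 1\}$, let $P_\chi \subseteq P$ be its $P$-stabilizer, and let $G_\chi \subseteq G$ be its preimage. The $P$-orbit of $\chi$ has size $1$, $2$, or $3$, so $[G:G_\chi] = [P:P_\chi] \le 3$. After extending $\chi$ to a low-dimensional representation $\tilde\chi$ of $G_\chi$ (in the worst case the extension has dimension $2$, at which point some of the ``room'' for $\sigma$ in the direct sum is sacrificed to compensate), I set $\tau = \Ind_{G_\chi}^G \tilde\chi$ and verify $\dim \tau \le 3$, or else $\dim\tau$ is small enough that the total $\dim(\sigma \oplus \tau) \le 6$ still holds after possibly replacing $\sigma$ by a smaller faithful rep of the image of $G$ in $P/\ker(P \to \Aut N)$.

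The main obstacle is the rationality of $\tau$: one must check that the binary cover really has a $2$-dimensional representation defined over $\bbk$ itself and not merely over $\bar\bbk$. The obstruction to lifting the projective representation $P \hookrightarrow \PGL_2(\bbk)$ to a linear representation of its double cover is the class of a quaternion algebra in the Brauer group of $\bbk$, so in the worst case the lift is a module over a $\bbk$-quaternion algebra and $\tau$ acquires an extra factor of $2$ in dimension. The tedious verification is that in every case this factor either does not occur or can be absorbed: the dimension of $\tau$ stays $\le 3$, and when occasionally $\dim\tau$ must be taken as $4$, one compensates by reducing $\sigma$ to a faithful representation of $P$ of dimension $\le 2$. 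Once the case analysis is tabulated and each case is individually checked, the uniform bound $\rdim_\bbk(G) \le 6$ follows.
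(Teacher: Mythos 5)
Your overall skeleton --- realize a faithful representation of $G$ as $\sigma\oplus\tau$ with $\ker\sigma=N$ and $\ker\tau\cap N=1$ --- is sound, and it is a genuinely different organization from the paper, which instead enumerates every isomorphism class of extension $G$ (via $H^2$ computations and explicit case analysis) and bounds each one separately. But your dimension bookkeeping has a genuine gap, concentrated exactly where the theorem is hard: the claim that the binary or dicyclic cover of $P$ always admits a $2$-dimensional faithful representation over $\bbk$ itself is false. Only one of the two stem extensions of $S_4$ embeds in $\GL_2(\bbk)$; for the other, $\widetilde{S_4}_-$ (transpositions lifting to order $4$), the spin characters generate $\bbQ(\sqrt{2})$, so over a field such as $\bbQ(i)$ (where $S_4\subseteq\PGL_2(\bbk)$) the smallest $\bbk$-representation that is faithful on the centre has dimension $4$; the paper's table records $\rdim_\bbk(\widetilde{S_4}_-)=4$. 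Likewise $Q_8$ is a central extension of $D_2\subseteq\PGL_2(\bbQ)$ by $C_2$, and its unique faithful-on-centre irreducible has Schur index $2$ over $\bbQ$, forcing $\dim\tau\ge 4$. In such cases your additive bound gives $\dim\sigma+\dim\tau\ge 3+4=7$ (or $2+5=7$ for $Q_8\times C_2$ with $N\cong C_2^2$), exceeding $6$.

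Neither of your proposed repairs works. Replacing $\sigma$ by a faithful representation of $P$ of dimension at most $2$ is impossible when $P=S_4$, which has no faithful $2$-dimensional representation in characteristic $0$. Replacing $\sigma$ by a faithful representation of the image of $G$ in $P/\ker(P\to\Aut(N))$ enlarges $\ker\sigma$ beyond $N$, so $\ker\sigma\cap\ker\tau$ is no longer forced to be trivial and $\sigma\oplus\tau$ need not be faithful. The correct rescue in the problematic cases is that the large constituent of $\tau$ --- typically induced from an index-$2$ subgroup, e.g.\ from $\widetilde{A_4}\subset\widetilde{S_4}_-$ --- is already faithful on far more than $N$, often on all of $G$, so $\sigma$ can be dropped or shrunk; but verifying this requires exactly the group-by-group analysis your uniform framework was meant to avoid, and which the paper carries out by listing every extension with $P$ polyhedral and treating each individually. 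As written, your argument defers this verification (``either does not occur or can be absorbed'') while the specific absorption mechanisms you name fail in identifiable cases, so the proof is incomplete.
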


Recall that when $\bbk$ is algebraically closed of characteristic $0$,
the subgroups of $\PGL_2(\bbk)$ are either cyclic, dihedral, or
one of the groups $A_4$, $S_4$ or $A_5$ (see for instance \cite{Beauville}).
First, we consider the case where $P$ is cyclic or dihedral.

\begin{lem}
Let $\bbk$ be a field of characteristic $0$.
Suppose $G$ sits inside an exact sequence
\begin{equation} \label{eq:appendix_ext}
1 \to N \to G \to P \to 1
\end{equation}
where $N \cong C_2$ or $N \cong C_2^2$,
and $P$ is a cyclic or dihedral subgroup of $\PGL_2(\bbk)$.
Then $\rdim_{\bbk}(G) \le 6$.
\end{lem}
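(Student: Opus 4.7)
The plan is to construct, for each such $G$, a faithful $\bbk$-representation of dimension at most $6$ as the direct sum of a $2$-dimensional pullback from $P$ with a complementary representation of dimension at most $4$ that is faithful on $N$.

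For the first piece, any order-$n$ element of a cyclic or dihedral subgroup $P \subseteq \PGL_2(\bbk)$ lifts to a matrix in $\GL_2(\bbk)$ whose eigenvalue ratio is a primitive $n$-th root of unity $\zeta_n$; computing normalised traces forces $\zeta_n+\zeta_n^{-1} \in \bbk$. Using the companion matrix of $x^2-(\zeta_n+\zeta_n^{-1})x+1$ together with a reflection in the dihedral case, I obtain a faithful $2$-dimensional representation of $P$ over $\bbk$, whose pullback along $G \twoheadrightarrow P$ yields $\sigma\colon G \to \GL_2(\bbk)$ with $\ker \sigma = N$.

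For the second piece, let $A \subseteq G$ be the preimage of a cyclic subgroup of index $\le 2$ of $P$, so $[G:A] \le 2$ and $A$ fits into $1 \to N \to A \to C_n \to 1$. When the conjugation action of $C_n$ on $N$ is trivial (automatic if $N \cong C_2$), $A$ is abelian of rank $\le 3$ and exponent dividing $2n$; the bound $[\bbk(\zeta_{2n}):\bbk] \le 4$ implied by $\zeta_n+\zeta_n^{-1} \in \bbk$ then allows the construction of a $\bbk$-representation $\tau_A$ of $A$ faithful on $N$ of dimension at most $2$, built from characters of order dividing $2n$ extending suitable characters of $N$. When the $C_n$-action on $N$ is nontrivial (which forces $N \cong C_2^2$ with the action factoring through $\Aut(N) = S_3$), I pass to the kernel of this action, an abelian subgroup of index $\le 3$ in $A$, and use induction. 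Setting $\tau := \Ind_A^G \tau_A$ gives a representation of $G$ whose restriction to $N$ contains $\tau_A|_N$ (since $N$ is normal in $G$), and hence is faithful on $N$, with $\dim \tau \le 4$.

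The main technical obstacle is the non-split case in which $N \subseteq [G,G]$, the paradigm being the generalised quaternion group $G \cong Q_{4n}$ with $N = Z(G) \cong C_2$. Here no $1$-dimensional character of $G$ detects $N$ at all, so $\tau$ cannot be built from characters of $G$ alone; instead one is forced to analyse the unique faithful $2$-dimensional complex irreducible representation of $Q_{4n}$, whose character field $\bbQ(\zeta_{2n}+\zeta_{2n}^{-1})$ embeds into a degree-$\le 2$ extension of $\bbk$ and whose Schur index over that field is at most $2$, forcing $\rdim_\bbk(Q_{4n}) \le 4$. In every case the resulting $\sigma \oplus \tau$ is a faithful $\bbk$-representation of $G$ of dimension at most $6$.
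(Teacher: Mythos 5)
Your two-dimensional piece $\sigma$ is fine, but the complementary piece $\tau$ is where the argument breaks, and it breaks at the first quantitative claim. Take $\bbk=\bbQ$ and $P=C_4$, which does lie in $\PGL_2(\bbQ)$ (e.g.\ $x\mapsto (x+1)/(1-x)$), and let $A=G=C_8$ with $N$ its unique subgroup of order $2$. Any $\bbQ$-representation of $C_8$ that is faithful on $N$ must contain a constituent on which the generator acts through a character of order $8$, and the corresponding irreducible $\bbQ$-representation has dimension $[\bbQ(\zeta_8):\bbQ]=4$, not $2$: a ``character of order dividing $2n$'' is one-dimensional over $\bbk$ only when $\bbk$ contains the relevant roots of unity, so your bound $[\bbk(\zeta_{2n}):\bbk]\le 4$ works against you rather than for you. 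The situation is worse for $G=C_2\times C_8$ with $N$ its $2$-torsion (so $N\cong C_2^2$ and $G/N\cong C_4$): every $\bbQ$-representation of $G$ faithful on $N$ has dimension at least $5$, since the $4$-dimensional constituent needed to detect the involution in the $C_8$ factor still has a kernel of order $2$ on $N$. Hence no $\tau$ with $\dim\tau\le 4$ exists, and $\sigma\oplus\tau$ cannot stay within $6$ (here $\rdim_\bbQ(G)=5\le 6$, but the minimal faithful representation is not of your shape). The non-trivial-action case has the same budget problem: $[G:B]$ can be $6$ and $\tau_B$ must have dimension at least $2$ to be faithful on $C_2^2$, so $\operatorname{Ind}_B^G\tau_B$ can have dimension $12$.

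Your instinct that the generalized quaternion groups are the crux is correct, but the arithmetic there is also wrong: the faithful irreducible $\bbk$-representation of $Q_{4n}$ has dimension $2\cdot m\cdot[\bbk(\chi):\bbk]$ with $m$ the Schur index, and when both extra factors equal $2$ this is $8$, not $4$; for quaternion $2$-groups every nontrivial subgroup contains the unique involution, so no reducible representation does better. Concretely, $D_8\subseteq\PGL_2(\bbQ)$ (generated by $x\mapsto(x+1)/(1-x)$ and $x\mapsto 1/x$), while the faithful characters of $Q_{16}$ have field $\bbQ(\sqrt{2})$ and Schur index $2$ there, so $\rdim_\bbQ(Q_{16})=8$. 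This is not merely a gap in your write-up: it appears to contradict the lemma as stated. The paper's own proof takes a different route from yours --- it does not decompose a faithful representation as (pullback from $P$) $\oplus$ (something faithful on $N$), but passes to the index-$\le 2$ subgroup $H$ over the rotation subgroup, uses that $H$ is abelian when the conjugation action on $N$ is trivial, and induces up --- yet it hits the same wall, since it rests on the assertion that $C_n\subseteq\PGL_2(\bbk)$ forces $\rdim_\bbk(C_{2n})\le 2$, which already fails for $n=4$ and $\bbk=\bbQ$ because $C_8$ does not embed in $\GL_2(\bbQ)$.
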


\begin{proof}
Recall that $\Aut(C_2)=1$
and $\Aut(C_2^2) \cong S_3$.
Also, observe that if $C_n \subseteq \PGL_2(\bbk)$
then $\rdim_\bbk(C_{2n}) \le 2$
and $\rdim_\bbk(D_{2n}) \le 2$.

Suppose $P \cong C_n$ for some positive integer $n$.
If $N$ is central then $P$ is one of the following possibilites:
\[
C_n \times C_2,\ C_{2n},\ C_n \times C_2^2, C_{2n} \times C_2,
\]
which may not be distinct depending on $n$.
In all cases we obtain $\rdim_\bbk(G) \le 3$.

Suppose $N$ is \emph{not} central.
In this case, $N \cong C_2^2$ and the image $I$ of $\pi : P \to S_3$
is either $C_2$ or $C_3$.
If the image is $C_2$, then let $H$ be the kernel of the map from
$G \to C_2$.
Now $H$ is a central extension of a cyclic group.
Thus, by taking induced representations
$\rdim_\bbk(G) \le 2\rdim_\bbk(H) \le 6$.

In the case of $I \cong C_3$,
let $g \in G$ map to a generator of $P$.
Suppose the image of $g$ has order $n > 1$ in $P$.
In other words, $g^n \in N$.
If $g^n \ne 1$, then $g^n$ is not invariant under conjugation by $g$,
which is absurd.
Thus we have a splitting $G=N \rtimes P$.
Let $K$ be the subgroup of $P$ of order $C_{n/3}$ embedded into $G$.
Thus $G/K \cong A_4$.
Since $N$ and $K$ are complementary,
we have an injective morphism $G \to P \times A_4$.
Thus $\rdim_\bbk(G) \le \rdim_\bbk(P) + \rdim_\bbk(A_4) \le 6$.

Suppose $P \cong D_{2n}$.
Let $H$ be the preimage of the normal subgroup $C_n$ in $P$.
If $H$ is a central extension as above, then,
by taking induced representations, we obtain
$\rdim_\bbk(G) \le 2\rdim_\bbk(H) \le 6$.

If $H$ is not a central extension, then $N \cong C_2^2$
and the map $\pi : P \to S_3$ is surjective
with restriction $\pi(H) \cong C_3$.
Similar to above, we have a splitting $G = N \rtimes P$ with
$K \subseteq H$ of order $C_{n/3}$ and $G/K \cong S_4$.
We have an injective morphism $G \to P \times S_4$.
Thus $\rdim_\bbk(G) \le \rdim_\bbk(P) + \rdim_\bbk(S_4) \le 6$.
\end{proof}

The remaining cases of $A_4$, $S_4$ and $A_5$ are the most complicated,
however it is a finite problem.
In order to prove Theorem~\ref{thm:dihedral_ext},
we explicitly enumerate all possible groups $G$ fitting into
an extension \eqref{eq:appendix_ext} in these cases.
The full list can be found in Table~\ref{tbl:poly_exts} below.
We then find a ``worst case'' upper bound for $\rdim_{\bbk}(G)$
under the weakest assumptions on $\bbk$
such that $P \subseteq \PGL_2(\bbk)$
(namely, $-1$ is a sum of squares for all three cases
and $\sqrt{5} \in \bbk$ if $P=A_5$).
The enumeration of possible extensions was computed by consulting the
Small Groups Database in GAP~\cite{Gap4}.
The representation dimensions were found by
inspection of the character tables of the groups, with additional
arithmetic information computed via the wedderga
package~\cite{wedderga}.

For the sake of completeness, we also prove the theorem totally by hand
without the use of computer assistance.  This constitutes the remainder
of the appendix.

The groups $A_4$, $S_4$ and $A_5$ are also known as the tetrahedral,
octahedral, and icosahedral groups, respectively.
These three polyhedral groups do not have $2$-dimensional
representations, and any preimage of these groups in $\PGL_2(\bbk)$
in $\GL_2(\bbk)$ must contain the binary tetrahedral,
binary octahedral, or binary icosahedral group.
These are all central stem extensions of the corresponding polyhedral
group by $C_2$.

We recall some facts about projective representations of symmetric
groups (see, e.g., \cite{stembridge}).
We denote the binary tetrahedral group by $\widetilde{A_4}$
and the binary icosahedral group by $\widetilde{A_5}$;
these two groups are the unique non-split extensions of $A_4$ and $A_5$
by $C_2$.
We denote the binary octahedral group by $\widetilde{S_4}_+$ since there
is another non-split central stem extension,
which we denote by $\widetilde{S_4}_-$.
The two groups can be distinguished by the fact that transpositions have
preimages of order $2$ in $\widetilde{S_4}_+$, and preimages of order
$4$ in $\widetilde{S_4}_-$.

We have the following convenient exceptional isomorphisms:
\begin{center}
\begin{tabular}{ccc}
$\begin{aligned}
A_4 &\cong \PSL_2(3) \\
\widetilde{A_4} &\cong \SL_2(3)
\end{aligned}$ &
$\begin{aligned}
S_4 &\cong \PGL_2(3) \\
\widetilde{S_4}_+ &\cong \GL_2(3)
\end{aligned}$ &
$\begin{aligned}
A_5 &\cong \PSL_2(4) \cong \PSL_2(5) \\
\widetilde{A_5} &\cong \SL_2(5) 
\end{aligned}$
\end{tabular}
\end{center}

We remark that $\widetilde{S_4}_+$ has an outer automorphism
given by the map $M \mapsto \det(M)M$ where $M$ is a matrix
in $\GL_2(3) \cong \widetilde{S_4}_+$.

Now we consider the structure of $\GL_2(\bbZ/4\bbZ)$.
First, observe that
\[
H= \left\langle
\begin{pmatrix} 0 & 1\\1 & 0 \end{pmatrix},\
\begin{pmatrix} 3 & 3\\1 & 0 \end{pmatrix}
\right\rangle
\]
is a subgroup isomorphic to $S_3$.
Next, observe that the subgroups
\[
A = \left\{
\begin{pmatrix} 1 & 0\\0 & 1 \end{pmatrix},\
\begin{pmatrix} 3 & 0\\0 & 3 \end{pmatrix},\
\begin{pmatrix} 3 & 2\\2 & 1 \end{pmatrix},\
\begin{pmatrix} 1 & 2\\2 & 3 \end{pmatrix}
\right\}
\]
and
\[
B = \left\{
\begin{pmatrix} 1 & 0\\0 & 1 \end{pmatrix},\
\begin{pmatrix} 1 & 2\\2 & 1 \end{pmatrix},\
\begin{pmatrix} 3 & 2\\0 & 3 \end{pmatrix},\
\begin{pmatrix} 3 & 0\\2 & 3 \end{pmatrix}
\right\}
\]
are each isomorphic to $C_2^2$, each normalized by $H$,
and commute with one another.
Looking at the action of $H$ on $B$, we conclude that
$HB$ is isomorphic to $S_4 \cong C_2^2 \rtimes S_3$.
The group $A$ is a normal complement to $HB$ and so we conclude that
$\GL_2(\bbZ/4\bbZ) \cong C_2^2 \rtimes S_4$.

We also see that there is a non-stem non-split central
extension of $S_4$ that is isomorphic to $\SL_2(\bbZ/4\bbZ)$.
We have an alternate description 
as $\SL_2(\bbZ/4\bbZ) \cong A_4 \rtimes C_4$
where $C_4$ is generated by a lift of a transposition in $S_4$.

\begin{lem} \label{lem:ext_poly_by_2}
Suppose $G$ sits in an exact sequence
\[
1 \to C_2 \to G \to P \to 1 \ .
\]
If $P \cong A_4$ then $G$ is isomorphic to
\[
A_4 \times C_2,\textrm{ or }
\widetilde{A_4}.
\]
If $P \cong S_4$ then $G$ is isomorphic to
\[
S_4 \times C_2,\
\widetilde{S_4}_+,\
\widetilde{S_4}_-,\textrm{ or }
\SL_2(\bbZ/4\bbZ)
\]
If $P \cong A_5$ then $G$ is isomorphic to
\[
A_5 \times C_2,\textrm{ or }
\widetilde{A_5}.
\]
\end{lem}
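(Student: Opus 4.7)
The approach is to classify central extensions of $P$ by $C_2$. First, observe that the kernel $N \cong C_2$ is automatically central in $G$: the conjugation action of $G/N \cong P$ on $N$ factors through $\Aut(C_2) = 1$. Thus, equivalence classes of such extensions are classified by $H^2(P, C_2)$ with trivial $P$-action, and since equivalent extensions yield isomorphic groups, it suffices to enumerate cohomology classes and exhibit a representative for each. By the universal coefficient theorem,
\[
H^2(P, C_2) \cong \Hom(H_2(P,\bbZ), C_2) \oplus \operatorname{Ext}(P^{\operatorname{ab}}, C_2),
\]
where $H_2(P,\bbZ)$ is the Schur multiplier of $P$.

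For $P = A_4$, one has Schur multiplier $C_2$ and $A_4^{\operatorname{ab}} \cong C_3$, so $H^2(A_4, C_2) \cong C_2$, giving exactly two classes, represented by the split extension $A_4 \times C_2$ and the unique non-split (stem) extension $\widetilde{A_4} \cong \SL_2(\bbF_3)$. The case $P = A_5$ is analogous: $A_5$ has Schur multiplier $C_2$ and trivial abelianization, so $H^2(A_5, C_2) \cong C_2$, and the two groups are $A_5 \times C_2$ and $\widetilde{A_5} \cong \SL_2(\bbF_5)$.

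For $P = S_4$, the Schur multiplier is $C_2$ and $S_4^{\operatorname{ab}} \cong C_2$, so $H^2(S_4, C_2) \cong C_2 \times C_2$ has four classes, and I would match them as follows. The trivial class gives $S_4 \times C_2$. The non-trivial class in the $\Hom$-summand alone gives a stem extension realized by the binary octahedral group $\widetilde{S_4}_+ \cong \GL_2(\bbF_3)$. The non-trivial $\operatorname{Ext}$-class alone corresponds to the pullback along $\operatorname{sign}\colon S_4 \to C_2$ of the non-split sequence $0 \to C_2 \to C_4 \to C_2 \to 0$; this pullback is isomorphic to $\SL_2(\bbZ/4\bbZ)$, consistently with $\SL_2(\bbZ/4\bbZ)/\{\pm I\} \cong S_4$ and with the resulting extension being neither split nor stem. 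The remaining diagonal class is the other stem extension $\widetilde{S_4}_-$. Pairwise non-isomorphism of these four groups follows from standard invariants: $S_4 \times C_2$ is the unique one whose center contains $C_2^2$; $\widetilde{S_4}_+$ and $\widetilde{S_4}_-$ are both stem, but are distinguished by whether a transposition lifts to an element of order $2$ or $4$; and $\SL_2(\bbZ/4\bbZ)$ is the unique non-stem, non-split example.

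The main obstacle is the $S_4$ case, specifically pinning down which cohomology class corresponds to which candidate group (in particular, $\SL_2(\bbZ/4\bbZ)$ versus $\widetilde{S_4}_-$). A clean way to resolve this is to exhibit all four listed groups explicitly as central extensions of $S_4$, verify they are pairwise non-isomorphic via the invariants above, and then invoke $|H^2(S_4, C_2)| = 4$ to conclude that no other isomorphism class can occur. The alternative is a direct cocycle calculation on a presentation such as $S_4 = \langle s,t \mid s^4 = t^3 = (st)^2 = 1 \rangle$, evaluating the orders of lifts of $s$, $t$, and $st$ in each candidate group to pin down the cocycle.
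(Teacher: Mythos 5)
Your proposal is correct and follows essentially the same route as the paper: both arguments reduce to computing $H^2(P,C_2)$ (trivial action, since $\Aut(C_2)=1$), exhibiting that many pairwise non-isomorphic groups realizable as central extensions of $P$ by $C_2$, and concluding by counting. The paper simply cites Dolgachev--Iskovskikh for the cohomology groups and the earlier discussion in the appendix for the distinguishing invariants, whereas you supply these details via the universal coefficient theorem and explicit invariants (center, stem/non-stem, order of lifts of transpositions); the extra care you take about which cohomology class matches which group is, as you note, ultimately unnecessary once the counting argument is in place.
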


\begin{proof}
As in Lemma~4.4~of~\cite{DolIsk}, we have
\[
H^2(A_4,C_2) \cong C_2,\
H^2(S_4,C_2) \cong C_2^2,\textrm{ and }
H^2(A_5,C_2) \cong C_2.
\]
We have exhibited as many isomorphism classes as there are extensions,
so the result is proved.
\end{proof}

\begin{lem} \label{lem:ext_poly_by_22}
Suppose $G$ sits in an exact sequence
\[
1 \to N \to G \to P \to 1 ,
\]
where $N\cong C_2^2$. If $P \cong A_4$ then $G$ is isomorphic to
\[
A_4 \times C_2^2,\
\widetilde{A_4} \times C_2,\
C_2^2 \rtimes A_4,\textrm{ or }
C_4^2 \rtimes C_3.
\]
If $P \cong S_4$ then $G$ is isomorphic to
\begin{gather*}
S_4 \times C_2^2,\
\widetilde{S_4}_+ \times C_2,\
\SL_2(\bbZ/4\bbZ) \times C_2,\
\widetilde{S_4}_- \times C_2,\\
\widetilde{A_4} \rtimes C_4,\
\widetilde{S_4}_+ \rtimes C_2,\
\GL_2(\bbZ/4\bbZ),\
C_2^2 \rtimes S_4,\textrm{ or }
C_4^2 \rtimes S_3.
\end{gather*}
If $P \cong A_5$ then $G$ is isomorphic to
\[
A_5 \times C_2^2,\textrm{ or }
\widetilde{A_5} \times C_2.
\]
\end{lem}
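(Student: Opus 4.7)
The plan is to enumerate isomorphism classes of extensions $1 \to N \to G \to P \to 1$ with $N \cong C_2^2$ by generalizing the cohomological strategy used for Lemma~\ref{lem:ext_poly_by_2}. Any such extension is classified by the pair $(\phi, \alpha)$ consisting of an action $\phi \colon P \to \Aut(N) \cong S_3$ together with a class $\alpha \in H^2(P, N)$ (with module structure induced by $\phi$), and two such pairs give isomorphic groups $G$ iff they differ by an element of $\Aut(N)$ acting simultaneously on both.

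First I would list the possible actions $\phi$ up to $\Aut(N)$-conjugation. Since a homomorphism to $S_3$ is determined by its kernel, I use the normal subgroup structure of each $P$: simplicity of $A_5$ forces $\phi$ trivial; for $A_4$ either $\phi$ is trivial or $\phi$ surjects onto $C_3 \le S_3$ with kernel $V_4$; for $S_4$ either $\phi$ is trivial, the sign map $S_4 \twoheadrightarrow C_2$, or the quotient $S_4 \twoheadrightarrow S_4/V_4 \cong S_3$. Next, for each action I would compute $H^2(P, N)$ with the appropriate module structure. In the trivial-action case the decomposition $H^2(P, N) \cong H^2(P, C_2)^{\oplus 2}$ together with the cohomology values from Lemma~\ref{lem:ext_poly_by_2} and the diagonal $\Aut(N) \cong S_3$-action on pairs gives $2$, $5$, and $2$ isomorphism classes of extensions for $P = A_4, S_4, A_5$ respectively. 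For the remaining nontrivial actions (only for $A_4$ and $S_4$), I would use a Lyndon--Hochschild--Serre spectral sequence along $\ker(\phi)$ to compute $H^2$, contributing the remaining $2$ and $4$ extensions and producing the final tallies $4$, $9$, $2$ that match the lists in the lemma.

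The main obstacle is identifying each cohomology class with one of the explicitly named groups. Direct products and products with the stem central extensions from Lemma~\ref{lem:ext_poly_by_2} (such as $\widetilde{A_4} \times C_2$, $\widetilde{S_4}_+ \times C_2$, $\widetilde{S_4}_- \times C_2$, and $\SL_2(\bbZ/4\bbZ) \times C_2$) account for the trivial-action classes. The genuinely twisted examples --- including $C_2^2 \rtimes A_4$, $C_4^2 \rtimes C_3$, $\widetilde{A_4} \rtimes C_4$, $\widetilde{S_4}_+ \rtimes C_2$, $C_2^2 \rtimes S_4$, $\GL_2(\bbZ/4\bbZ)$, and $C_4^2 \rtimes S_3$ --- each require exhibiting an explicit normal $C_2^2$ in the concrete group together with the predicted quotient action. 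I would verify each match by comparing invariants (order, center, abelianization, exponent, and the action on the distinguished $C_2^2$); as the paragraph following the statement indicates, a complete mechanical verification is conveniently carried out using GAP's Small Groups Database.
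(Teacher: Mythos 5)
Your strategy is sound and genuinely different from the paper's. The paper never computes $H^2(P,N)$ with non-trivial coefficients: for $P=A_4$ it splits on whether the action $P\to\Aut(N)$ is trivial (reducing to two successive central extensions by $C_2$ and the known groups $H^2(A_4,C_2)$ and $H^2(\widetilde{A_4},C_2)$) or has image $C_3$ (where an explicit commutator manipulation with a preimage $w$ of an involution and an order-$3$ element $r$ shows the preimages of the Klein four-group commute, so $G$ is $C_2^2\rtimes A_4$ or $C_4^2\rtimes C_3$ according to the common order of those preimages); it then obtains every $S_4$-extension as $\langle H,g\rangle$ with $H$ the already-classified extension over $A_4$ and $g$ a lift of a transposition, running through four subcases. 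Your route --- fixing $\phi\colon P\to\Aut(N)\cong S_3$, computing $H^2(P,N)$ for each module structure, and counting orbits --- is more systematic, and your tallies are consistent: the trivial action gives $2$, $5$, $2$ orbits (note the fifth central class over $S_4$, of ``rank two,'' is $\widetilde{A_4}\rtimes C_4$, which is \emph{not} a direct product, contrary to your suggestion that products account for all trivial-action classes), the sign action on $N$ contributes $\GL_2(\bbZ/4\bbZ)$ and $\widetilde{S_4}_+\rtimes C_2$, and the full $S_3$-action contributes $C_2^2\rtimes S_4$ and $C_4^2\rtimes S_3$. What the paper's approach buys is that the identification of each extension with a concrete group falls out of the construction; what yours buys is uniformity and a built-in completeness check.

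Two points need repair before this is a proof. First, your assertion that two pairs give isomorphic $G$ \emph{if and only if} they differ by an element of $\Aut(N)$ is false in the ``only if'' direction (one must at least allow compatible automorphisms of $P$, and inequivalent extensions can still be abstractly isomorphic); only the ``if'' direction and completeness of the enumeration are actually needed, but then equating orbit counts with the lengths of the lists requires justification. For instance, with the $C_3$-action on $N$ in the $A_4$ case one finds $|H^2(A_4,N)|=4$ while only two groups occur, so one must check that the centralizer $C_3\subseteq\Aut(N)$ of the action fuses the three nonzero classes --- true, but not automatic. Second, and more seriously, the Lyndon--Hochschild--Serre computations for the non-trivial actions and the matching of every class to a named group are asserted rather than performed; deferring this to GAP reproduces the paper's consistency check (Table~\ref{tbl:poly_exts}) but not the hand proof the appendix supplies.
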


\begin{proof}
Recall that $\Aut(C_2^2) \cong S_3$.

{\bf Case $P \cong A_4$:}\\
\noindent
First, assume that $A_4 \to \Aut(N)$ has trivial image.
In this case, $G$ can be described inductively as
a central extension by $C_2$ of a central extension by $C_2$.
Thus $G$ is an extension of $A_4 \times C_2$ or $\widetilde{A_4}$
by $C_2$.
In the first case, the subgroup $C_2$ has an isomorphic lift since $N
\cong C_2^2$.  Thus $G$ is isomorphic to $\widetilde{A_4} \times C_2$
or $A_4 \times C_2^2$.
In the latter case, we have $H^2(\widetilde{A_4},C_2)=1$
as in Lemma~4.4~of~\cite{DolIsk}.
Thus, $\widetilde{A_4}$ has no non-trivial central extensions. 

Now, suppose that $A_4 \to \Aut(N)$ is non-trivial;
then the image is isomorphic to $C_3$.
Let $w$ be the preimage in $G$ of an involution in $A_4$
and let $r$ be an element of order $3$ in $G$.
Recall that the involutions in $A_4$ generate the Klein four group.
Denote by $r(w)=rwr^{-1}$.
Since $r$ permutes the involutions in $A_4$,
we have $w r(w)=n r^2(w)$ and
$w^{-1} r(w)^{-1}=n' r^2(w)^{-1}$
for some $n,n' \in N$.
The involutions in $A_4$ commute with elements of $N$,
thus $[w,r(w)]=nn'$.  Thus
\[
w^2 r(w)^2 n n' = wr(w)wr(w)=(nr^2(w))^2=r^2(w)^2n^2=r^2(w)^2 \ .
\]
Recalling that $w^2=w^{-2}$, since $w^2 \in N$, we obtain
\[
w^2r(w^2)r^2(w^2)=nn'.
\]
The product of the $r$-orbit in $N$ is trivial so $nn'=1$.
We conclude that $w$ and $r(w)$ commute. 

The involutions in $A_4$ are all conjugate,
so the orders of their preimages in $G$ are all the same.
If the order is $2$ we obtain $G\cong C_2^2 \rtimes A_4$.
Otherwise the order is $4$ and we obtain $G\cong C_4^2 \rtimes C_3$.

{\bf Case $P \cong S_4$:}\\
\noindent
Let $\pi : G \to S_4$ be the natural projection,
let $H = \pi^{-1}(A_4)$, and let $g=\pi^{-1}(12)$
be a preimage of a transposition.
Now $G=\langle H,g \rangle$ and $H$ is isomorphic to one of
\[
A_4 \times C_2^2,\
\widetilde{A_4} \times C_2,\
C_2^2 \rtimes A_4,\
C_4^2 \rtimes C_3,
\]

{\bf Subcase $H \cong A_4 \times C_2^2$:}\\
\noindent
In fact, we will write $H = A_4 \times N$ where $A_4$ is an isomorphic
lift of $A_4$ from $S_4$.
Suppose $g$ acts trivially on $N$.
Let $K=\langle A_4,g\rangle$.
If $g^2=1$, then $K \simeq S_4$ and we obtain $G \cong S_4 \times N$.
If $g^2=n \ne 1$ then $K=\langle A_4,g\rangle$ is a non-split extension
of $S_4$ by $C_2$ containing $A_4$;
thus $K \cong \SL_2(\bbZ/4\bbZ)$.
Since $K$ has index $2$ in $G$, it is normal and
we conclude that $G \cong \SL_2(\bbZ/4\bbZ) \times C_2$.

Now suppose $g$ acts non-trivially on $N$.
Now $\langle N, g \rangle \cong D_8$.
By possibly choosing a different preimage for $g$, we may assume
$g^2=1$.
Thus there is only one possibility:
$G \cong C_2^2 \rtimes S_4 \cong \GL_2(\bbZ/4\bbZ)$.

{\bf Subcase $H \cong \widetilde{A_4} \times C_2$:}\\
\noindent
We will write $H = \widetilde{A_4} \times \langle w \rangle$
where $w$ is an involution in the center of $H$.
Let $z$ be the non-trivial involution in the center of
$\widetilde{A_4}$.
Assume $g$ acts trivially on $N$.
If $g^2=1$ then $G \cong \widetilde{S_4}_+ \times C_2$.
If $g^2=z$ then $G \cong \widetilde{S_4}_- \times C_2$.
If $g^2=w$ or $g^2=zw$ then
$G = \widetilde{A_4} \rtimes \langle g \rangle
\cong \widetilde{A_4} \rtimes C_4$.

Now suppose $g$ acts non-trivially on $N$.  Since $z$ is canonical,
we must have $g(z)=z$ and $g(w)=zw$.  By possibly replacing
$g$ with $gw$ we may assume $g^2=1$.
The subgroup $\langle \widetilde{A_4}, g \rangle$ is of index $2$ in $G$
and is isomorphic to $\widetilde{S_4}_+$.
Thus $G = \widetilde{S_4}_+ \rtimes \langle w \rangle
\cong \widetilde{S_4}_+ \rtimes C_2$.

{\bf Subcase $H \cong C_2^2 \rtimes A_4$:}\\
\noindent
We may write $H = N \rtimes A_4$ where $A_4$ is an isomorphic lift
from $S_4$.
Let $K$ be the normal subgroup of $A_4$ isomorphic to $C_2^2$.
Note that $K$ is also normal in $S_4$, thus $G/K$ is an extension
of $N$ by $S_3$.
The only possibility is $G/K\cong N \rtimes S_3 \cong S_4$.
By replacing $g$ with a different preimage we can assume
that $g^2K \notin NK$.  Since $g^2 \notin K$ we have $g^2=1$.
Thus $G = N \rtimes S_4 \cong C_2^2 \rtimes S_4$.

{\bf Subcase $H \cong C_4^2 \rtimes C_3$:}\\
\noindent
Here $G$ is an extension of $C_4^2$ by $S_3$ where
$C_4^2$ sits inside the $S_3$-equivariant exact sequence
\[
1 \to C_2^2 \to C_4^2 \to C_2^2 \to 1
\]
where $S_3$ acts faithfully on both instances of $C_2^2$.
Taking the long exact sequence in group cohomology we have
\[
\cdots \to H^2(S_3,C_2^2) \to H^2(S_3,C_4^2) \to H^2(S_3,C_2^2) \to \cdots
\]
where $H^2(S_3,C_2^2)=1$ since $S_4$ is the only extension of
$S_3$ by $C_2^2$ with the appropriate action.
We conclude that $H^2(S_3,C_4^2)$ is trivial and thus
$G \cong C_4^2 \rtimes S_3$ is the only possibility.

{\bf Case $P \cong A_5$:}\\
\noindent
Note that $A_5$ is simple, so there is no non-trivial homomorphism
$A_5 \to \Aut(C_2^2) \cong S_3$.
The argument is now identical to that of $A_4$ in this case.
\end{proof}

It remains to determine the representation dimensions of all these
groups.

\begin{lem} \label{lem:polyhedral_ext}
Suppose $G$ sits inside an exact sequence
\[
1 \to N \to G \to P \to 1
\]
where $P$ is a $A_4$, $S_4$ or $A_5$ and $N \cong C_2$
or $N \cong C_2^2$.
Suppose that $\bbk$ has characteristic $0$.
If $P$ is a subgroup of $\PGL_2(\bbk)$,
then $\rdim_{\bbk}(G) \le 6$.
\end{lem}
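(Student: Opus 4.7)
The plan is to check the bound case-by-case, using the explicit classification from Lemmas~\ref{lem:ext_poly_by_2} and \ref{lem:ext_poly_by_22}, which together enumerate (up to isomorphism) the finitely many possibilities for $G$. For each group on the combined list, I would exhibit a faithful $\bbk$-representation of dimension at most $6$. The hypothesis $P \subseteq \PGL_2(\bbk)$ via Theorem~\ref{thm:c1} forces $-1$ to be a sum of two squares in $\bbk$, and in the $A_5$ case additionally forces $\sqrt{5} \in \bbk$; these arithmetic constraints are precisely what is needed to realize the small representations of the building blocks over $\bbk$.

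First, I would record faithful $\bbk$-representations of the building blocks. The groups $A_4$ and $S_4$ admit $3$-dimensional faithful representations over $\bbQ$, and $A_5$ admits a $3$-dimensional faithful representation over $\bbQ(\sqrt{5}) \subseteq \bbk$. The binary covers $\widetilde{A_4}$, $\widetilde{S_4}_{+}$, $\widetilde{S_4}_{-}$, $\widetilde{A_5}$, and also $\SL_2(\bbZ/4\bbZ)$ and $\GL_2(\bbZ/4\bbZ)$, each admit faithful $\bbk$-representations of dimension at most $5$; when a natural $2$-dimensional representation fails to descend to $\bbk$ (the Schur index is $2$, e.g.\ for $\widetilde{A_4}$ one needs $\sqrt{-3}$), one instead takes the sum of a Galois-conjugate pair, which always descends, at the cost of doubling the dimension.

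For the cases on the list that are direct products $H \times C_2$ or $H \times C_2^2$, a direct sum of faithful representations of the factors gives the bound, the worst case being dimension $5$ (realized, for instance, by $\SL_2(\bbZ/4\bbZ) \times C_2$ or $\widetilde{A_5} \times C_2$). For the non-split semidirect products $\widetilde{A_4} \rtimes C_4$, $\widetilde{S_4}_{+} \rtimes C_2$, $C_2^2 \rtimes A_4$, $C_2^2 \rtimes S_4$, $C_4^2 \rtimes C_3$, and $C_4^2 \rtimes S_3$, I would construct a representation either by induction from an index-$2$ subgroup already handled, or by an explicit Clifford-theoretic construction: a $C_3$- or $S_3$-stable generating set of characters of the abelian normal subgroup $C_2^2$ or $C_4^2$ yields a permutation-like representation whose degree equals the total orbit size, and a careful count shows that a generating set of size at most $6$ always exists and descends to $\bbk$ under the given hypotheses.

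The main obstacle is the arithmetic of descent rather than the combinatorics: several of the $2$-dimensional representations involved are only defined over a quadratic extension of $\bbk$, so in each such case one must either verify descent under the given hypotheses or pass to a $4$-dimensional representation over $\bbk$ by restriction of scalars, which still fits within the bound. A secondary care-point is faithfulness: since we combine representations by direct sum, tensor product, and induction, one must track that the constructed representation has trivial kernel by checking that the center and any normal subgroups act nontrivially, which is routine in each case but must be verified systematically across the entire list.
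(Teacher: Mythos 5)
Your proposal follows essentially the same route as the paper: enumerate the finitely many groups $G$ via Lemmas~\ref{lem:ext_poly_by_2} and~\ref{lem:ext_poly_by_22}, then bound each one using direct sums for the products, induction from index-two subgroups, and explicit constructions from stable character sets for the semidirect products with abelian normal subgroup. One caution: your fallback of ``doubling'' a $2$-dimensional representation to dimension $4$ when it fails to descend must never actually be invoked, since several cases (e.g.\ $\widetilde{S_4}_-$, $\widetilde{A_4}\rtimes C_4$, $\widetilde{S_4}_+\rtimes C_2$) only stay within the bound of $6$ because $\rdim_\bbk(\widetilde{P})\le 2$ exactly; the clean way to see this, which the paper uses, is that $P\subseteq\PGL_2(\bbk)$ forces the binary polyhedral group $\widetilde{P}$ to sit inside $\GL_2(\bbk)$, so the $2$-dimensional faithful representation is available over $\bbk$ with no Schur-index analysis needed.
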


\begin{proof}
First, we note that since $P \subseteq \PGL_2(\bbk)$,
the corresponding binary polyhedral group $\widetilde{P}$
is a subgroup of $\GL_2(\bbk)$.
Thus we may assume
$\rdim_\bbk(\widetilde{P}) \le 2$ and $\rdim_\bbk(P) \le 3$.

Since $\rdim_\bbk(C_2)=1$, we have $\rdim_{\bbk}(G) \le 5$
for all $G$ among
\begin{gather*}
A_4 \times C_2,\ \widetilde{A_4},\
A_4 \times C_2 \times C_2,\ \widetilde{A_4} \times C_2,\\
S_4 \times C_2,\ \widetilde{S_4}_+,\
S_4 \times C_2 \times C_2,\ \widetilde{S_4}_+ \times C_2,\\
A_5 \times C_2,\ \widetilde{A_5},\
A_5 \times C_2 \times C_2,\ \widetilde{A_5} \times C_2 \ .
\end{gather*}

By using an induced representation, we see that
$\rdim_\bbk(\widetilde{S_4}_+ \rtimes C_2) \le
2\rdim_\bbk(\widetilde{S_4}_+)=4$.

Let $\pi : \widetilde{S_4}_- \to S_4$ be the canonical map.
Then $\pi^{-1}(A_4) \cong \widetilde{A_4}$ is a subgroup of
$\widetilde{S_4}_-$ of index $2$.  If $S_4 \subseteq \PGL_2(\bbk)$
then so must $A_4$.
Thus, by an induced representation
$\rdim_\bbk(\widetilde{S_4}_-) \le 2\rdim_\bbk(\widetilde{A_4})=4$.
Thus we have $\rdim_{\bbk}(G) \le 5$ for $G$ among
\[
\widetilde{S_4}_-,\ \widetilde{S_4}_- \times C_2
\]

Consider the group $G=\widetilde{A_4} \rtimes C_4$.
There is a surjective homomorphism $G \to H$
where $H \cong \widetilde{A_4} \rtimes C_2$
since $C_4$ does not act faithfully on $\widetilde{A_4}$.
Thus, by an induced representation
$\rdim_\bbk(H) \le 2\rdim_\bbk(\widetilde{A_4})=4$.
We now recall that $\rdim_\bbk(C_4) \le 2$ when $\bbk$ has
characteristic $0$.
Moreover, we have an injective homomorphism $G \to H \times C_4$
thus $\rdim_\bbk(G) \le 4+2=6$.

Suppose $G$ is one of
\[ C_2^2 \rtimes A_4, C_2^2 \rtimes S_4 . \]
In these cases, there is evident normal subgroup $N \cong C_2^2$.
Additionally, there is a normal subgroup $M \cong C_2^2$ of $A_4$ or
$S_4$ which acts trivially on $N$ and is therefore also normal in $G$.
This gives two homomorphisms $G\to S_4$ obtained by taking the
quotients by $N$ and $M$.
The product provides an injective homomorphism $G \to S_4 \times S_4$.
Thus $\rdim_\bbk(G) \le 2\rdim_\bbk(S_4)=6$.

Now we consider $C_4^2 \rtimes S_3$.
Among all the surjective morphisms $\sigma: C_4^2 \to C_4$,
we may select one that is invariant under an involution in $S_3$.
Thus, there is a set of three surjective morphisms
$\sigma_1,\sigma_2,\sigma_3 : C_4^2 \to C_4$
that are invariant under $S_3$.
Since $C_4$ has a two-dimensional representation over $\bbk$,
we may construct a $6$-dimensional representation of $C_4^2 \rtimes S_3$
via the direct sum $\sigma_1 \oplus \sigma_2 \oplus \sigma_3$
where $S_3$ acts by permutation matrices.
Thus for $G$ among
\[
C_4^2 \rtimes C_3,\ C_4^2 \rtimes S_3
\]
we have $\rdim_\bbk(G) \le 3\rdim_\bbk(C_4) \le 6$.

Recall that $\SL_2(\bbZ/4\bbZ) \cong A_4 \rtimes C_4$.
Thus there is an injective homomorphism
$\SL_2(\bbZ/4\bbZ) \to S_4 \times C_4$.
We conclude that $\rdim_\bbk(\SL_2(\bbZ/4\bbZ)) \le 5$.
Thus for $G$ among
\[
\SL_2(\bbZ/4\bbZ),\ \SL_2(\bbZ/4\bbZ) \times C_2
\]
we have $\rdim_\bbk(G) \le 6$.

Now consider $G \cong \GL_2(\bbZ/4\bbZ)$.
As discussed above,
there is a surjective homomorphism $\pi : G \to S_4$
with kernel
\[
A =
\left\langle
\begin{pmatrix}
1 & 2 \\ 2 & 3
\end{pmatrix},\
\begin{pmatrix}
3 & 2 \\ 2 & 1
\end{pmatrix}
\right\rangle \ .
\]
There is also a normal subgroup
\[
N :=
\left\langle
\begin{pmatrix}
3 & 3 \\ 1 & 0
\end{pmatrix},\
\begin{pmatrix}
3 & 2 \\ 0 & 3
\end{pmatrix}
\right\rangle \subseteq HB,
\]
which is isomorphic to $A_4$.
The quotient $Q:=G/N$ is a group of order $8$ not isomorphic to
$C_2^3$
(in fact, $Q \cong D_8$ but we do not need this).
Thus $\rdim_\bbk(Q) \le 2$.
Note that $N \cap K = 1$ so there is an injective group homomorphism
\[
\pi : G \to S_4 \times Q \ .
\]
We conclude that
$\rdim_\bbk(G) \le \rdim_\bbk(S_4) + \rdim_\bbk(Q) \le 5$.
Thus if $G$ is among the following:
\[
\SL_2(\bbZ/4\bbZ),\ \SL_2(\bbZ/4\bbZ) \times C_2,\ \GL_2(\bbZ/4\bbZ)
\]
then $\rdim_\bbk(G) \le 6$.
\end{proof}

\begin{table}[htb]
\caption{Extensions of polyhedral groups}
\label{tbl:poly_exts}
\def\arraystretch{1.5}
    \centering
    \begin{tabular}{|c|c|c|cc|cc|}
\hline
$N$ & $P$ & $G$ & Order & GAP ID & $\rdim_{\mathbb{Q}}(G)$ & $\rdim_{\bbk}(G)$\\
\hline \hline
$1$ & $A_4$ & $A_4$ & $12$ & $3$ & $3$ & $3$\\ \cline{2-7}
 & $S_4$ & $S_4$ & $24$ & $12$ & $3$ & $3$\\ \cline{2-7}
 & $A_5$ & $A_5$ & $60$ & $5$ & $4$ & $3$\\
\hline
$C_2$ & $A_4$ & $A_4 \times C_2$ & $24$ & $13$ & $3$ & $3$\\
 &  & $\widetilde{A_4}$ &  & $3$ & $4$ & $2$\\ \cline{2-7}
 & $S_4$ & $S_4 \times C_2$ & $48$ & $48$ & $3$ & $3$\\
 &  & $\widetilde{S_{4+}}$ &  & $29$ & $4$ & $4$\\
 &  & $\widetilde{S_{4-}}$ &  & $28$ & $8$ & $4$\\
 &  & $\SL_2(\mathbb{Z}/4\mathbb{Z})$ &  & $30$ & $5$ & $5$\\ \cline{2-7}
 & $A_5$ & $A_5 \times C_2$ & $120$ & $35$ & $4$ & $3$\\
 &  & $\widetilde{A_5}$ &  & $5$ & $4$ & $2$\\
\hline
$C_2^2$ & $A_4$ & $A_4 \times C_2^2$ & $48$ & $49$ & $4$ & $4$\\
 &  & $\widetilde{A_4} \times C_2$ &  & $32$ & $4$ & $3$\\
 &  & $C_2^2 \rtimes A_4$ &  & $50$ & $6$ & $6$\\
 &  & $C_4^2 \rtimes C_3$ &  & $3$ & $3$ & $6$\\ \cline{2-7}
 & $S_4$ & $S_4 \times C_2^2$ & $96$ & $226$ & $4$ & $4$\\
 & & $\widetilde{S_{4+}} \times C_2$ &  & $189$ & $5$ & $5$\\
 &  & $\widetilde{S_{4-}} \times C_2$ &  & $188$ & $9$ & $5$\\
 &  & $\SL_2(\mathbb{Z}/4\mathbb{Z}) \times C_2$ &  & $194$ & $5$ & $6$\\
 &  & $\widetilde{A_4} \rtimes C_4$ &  & $66$ & $6$ & $6$\\
 &  & $\widetilde{S_{4+}} \rtimes C_2$ &  & $190$ & $8$ & $4$\\
 &  & $\GL_2(\mathbb{Z}/4\mathbb{Z})$ &  & $195$ & $5$ & $5$\\
 &  & $C_2^2 \rtimes S_4$ &  & $227$ & $6$ & $6$\\
 &  & $C_4^2 \rtimes S_3$ &  & $64$ & $6$ & $6$\\ \cline{2-7}
 & $A_5$ & $A_5 \times C_2^2$ & $240$ & $190$ & $5$ & $4$\\
 &  & $\widetilde{A_5} \times C_2$ &  & $94$ & $5$ & $3$\\
\hline
   \end{tabular}

\end{table}


\end{document}